\documentclass[reqno,12pt]{amsart}

\usepackage[a4paper, 
            left=1in,
            right=1in,
            top=1in,
            bottom=1in]{geometry}

\usepackage[utf8]{inputenc}
\usepackage{microtype}
\usepackage{graphicx}
\usepackage{amssymb} 
\usepackage[colorlinks = true,
            linkcolor = blue,
            urlcolor  = blue,
            citecolor = blue,
            anchorcolor = blue]{hyperref}

\usepackage{mathtools}  
\usepackage{thmtools} 
\usepackage{thm-restate}

\usepackage{microtype}

\theoremstyle{plain}
\newtheorem{thm}{}[section]
\newtheorem{lemma}[thm]{Lemma}
\newtheorem{proposition}[thm]{Proposition} 
\newtheorem{conjecture}[thm]{Conjecture}
\newtheorem{theorem}[thm]{Theorem}

\newtheorem{mainquestion}{Main Question} 
\newtheorem{openquestion}[thm]{Open Question}
\theoremstyle{remark}
\newtheorem{claim}[thm]{Claim}

 \newtheorem{example}[thm]{Example}

\usepackage{algorithmic}

\newenvironment{claimproof}
  {\proof[Proof (claim)]}
  {\endproof}

\usepackage{centernot}  
\usepackage{xspace}  
\usepackage{float}

\usepackage[scr=boondoxo]{mathalpha}  
\newcommand{\coloneqq}{\mathrel{\mathop:}\mathrel{\mkern-1.2mu}=} 

  \newcommand{\acts}{\curvearrowright} 
\newcommand{\astlarge}{\circledast}

\newcommand{\Pol}{\ensuremath{\mathrm{Pol}}} 
 
\newcommand{\struct}[1]{\mathfrak{#1}}    
\newcommand{\blowup}[1]{(\mathbb{Q}\wr #1,{\neq})}     
\newcommand{\blowupi}[1]{(\mathbb{Q}\wr #1,{\neq},I_4)}  
\newcommand{\ordblowup}[1]{\big((\mathbb{Q};<) \wr #1,{\neq}\big)}  
\newcommand{\ordblowupi}[1]{\big((\mathbb{Q};<) \wr #1,{\neq},I_4\big)}

\newcommand{\CSP}{\ensuremath{\mathrm{CSP}}\xspace}   
   
\newcommand{\PCSP}{\ensuremath{\mathrm{PCSP}}\xspace}   

\newcommand{\age}{\ensuremath{\mathrm{age}}\xspace}

\newcommand{\Aut}
{\ensuremath{\mathrm{Aut}}\xspace}  
\newcommand{\End}{\ensuremath{\mathrm{End}}\xspace}

\newcommand{\Sym}{\ensuremath{\mathrm{Sym}}\xspace}

\usepackage{stmaryrd}   
\usepackage{booktabs}

\newcommand{\code}[1]{\ensuremath{\llbracket#1\rrbracket}\xspace}

\newcommand{\orbeq}[2]{#1/\text{\small$#2$}}
\newcommand{\of}[1]{\raisebox{0.25pt}{\textup{{\kern-0.025em{\footnotesize[\raisebox{-0.05pt}{$#1$}}\kern-0.025em{\footnotesize]}\kern0.025em}}}}

\newcommand{\fpwr}[2]{{#1}^{[#2]}}

\usepackage{xcolor}
\usepackage{pdfcomment}
\usepackage{mathtools}   
\usepackage{forest}

\usepackage{lineno}

\begin{document}

\title[Fundamental Questions in Infinite-Domain Constraint Satisfaction]{Three Fundamental Questions in Infinite-Domain Constraint Satisfaction}

\author[M. Pinsker]{Michael Pinsker}
\author[J. Rydval]{Jakub Rydval}
\author[M. Sch\"{o}bi]{Moritz Sch\"{o}bi}
\author[C. Spiess]{Christoph Spiess}
\author[P. Winkler]{Paul Winkler}
\thanks{\emph{Michael Pinsker, Jakub Rydval, and Moritz Sch\"{o}bi}: This research was funded in whole or in part by the Austrian Science Fund (FWF) [I 5948, ESP 1571225]. For the purpose of Open Access, the authors have applied a CC BY public copyright licence to any Author Accepted Manuscript (AAM) version arising from this submission. 
\smallskip \newline \emph{Michael Pinsker, Christoph Spiess, and Paul Winkler}: This research is  funded by the European Union (ERC, POCOCOP, 101071674). Views and opinions expressed are however those of the author(s) only and do not necessarily reflect those of the European Union or the European Research Council Executive Agency. Neither the European Union nor the granting authority can be held responsible for them.
\smallskip\newline A conference version of this material appeared in the Proceedings of the 50th International Symposium on Mathematical Foundations of Computer Science (MFCS), \#83, 1--20, 2025~\cite{pinsker2025three}.
}

\address{Institut f\"{u}r Diskrete Mathematik und Geometrie, FB Algebra, TU Wien, Austria}
\email{\{michael.pinsker,jakub.rydval,moritz.schoebi,christoph.spiess\}@tuwien.ac.at}  
\address{Institut f\"{u}r Algebra, TU Dresden, Germany}
\email{paul.winkler1@tu-dresden.de}

 \begin{abstract} The Feder-Vardi dichotomy conjecture for Constraint Satisfaction Problems (CSPs) with finite templates, confirmed independently by Bulatov and Zhuk, has an extension to certain well-behaved infinite templates due to Bodirsky and Pinsker which remains wide open. 
 We formulate three fundamental questions on the scope of the Bodirsky-Pinsker conjecture and provide positive answers to them.
 
Our first two main results provide two simplifications of this scope, one of  structural, and the other one of algebraic nature. 
The former simplification implies that the conjecture is equivalent to its restriction to templates without algebraicity, a crucial assumption in the most powerful classification methods. The latter yields that the higher-arity invariants of any template within its scope can be assumed to be essentially injective, and any algebraic condition characterizing any complexity class within the conjecture closed under Datalog reductions must be satisfiable by injections, thus lifting the mystery of the better applicability of certain algebraic conditions over others.
 
Our third main result uses the first one to show that any non-trivially tractable template within the scope serves, up to a Datalog-computable modification of it, as the witness of the tractability of a non-finitely tractable finite-domain Promise Constraint Satisfaction Problem (PCSP) by the so-called sandwich method. This provides a particularly strong connection between the Bodirsky-Pinsker conjecture and finite-domain PCSPs.

In the light of the third main result, we initiate a new case study\textemdash of phylogeny CSPs\textemdash which we investigate from the perspective of descriptive complexity.
Within this study, 
we show that there exists a tractable phylogeny CSP that pp-constructs a finite-domain PCSP inexpressible in fixed-point logic with counting but does not pp-construct any finite-domain CSP with this property.
\end{abstract}

\maketitle 
 
\renewcommand{\linenumberfont}{\normalfont \scriptsize\color{gray}}

\section{Introduction}\label{section:introduction}

 \subsection{The finite-domain CSP dichotomy theorem} 
 Fixed-template \emph{Constraint Satisfaction Problems} are computational problems parametrized by  structures $\struct{A}$ with a finite relational signature, called \emph{templates}; they are denoted by $\CSP(\struct{A})$ and ask whether a given structure $\struct{X}$ with the same signature as $\struct{A}$ admits a homomorphism to $\struct{A}$. 
The general CSP framework is incredibly rich, in fact, it contains all decision problems up to polynomial-time equivalence~\cite{bodirsky2008non}.
For this reason, CSPs are typically studied under additional structural restrictions on the template $\struct{A}$.
The restriction that has received most attention is requiring the domain of $\struct{A}$ to be finite, and the problems arising in this way are called \emph{finite-domain} CSPs.
Already in this setting, one obtains many well-known problems such as HORN-SAT, 2-SAT, 3-SAT, or 3-COLORING.

Consider the example of 3-COLORING. This NP-complete problem can be modeled as $\CSP(\struct{K}_3)$, where $\struct{K}_3$ stands for the complete graph on three vertices: indeed, a homomorphism to $\struct{K}_3$ from a given structure $\struct{X}$ in the same signature (i.e.~from a  graph $\struct{X}$) is simply a  mapping that does not map any two adjacent vertices of $\struct{X}$ to the same vertex of $\struct{K}_3$.
A key observation in the theory of CSPs is that the NP-completeness of $\CSP(\struct{K}_3)$ can be extended to CSPs of other structures which can ``simulate'' $\struct{K}_3$ over their domain or over a finite power thereof using \emph{primitive positive} (pp) formulas (such simulations are called \emph{pp-interpretations}).
Enhancing this by the additional observation that \emph{homomorphically equivalent}  templates have equal CSPs, one arrives at the notion of \emph{pp-constructibility}~\cite{barto2018wonderland}.  
If $\struct{A}'$ is pp-constructible from $\struct{A}$, then $\CSP(\struct{A}')$ is reducible to $\CSP(\struct{A})$ in logarithmic space; moreover, this reduction can be formulated in the logic programming language \emph{Datalog}.
This uniform  reduction between CSP templates is sufficiently powerful, as we will see below, to describe all NP-hardness amongst finite-domain CSPs.

In the early 2000s, the field of finite-domain constraint satisfaction quickly rose in fame, in particular due to the discovery of  tight  connections  with universal algebra.
The most basic of these connections is that two structures with identical domains have the same sets of pp-definable relations if and only if they have the same sets of polymorphisms~\cite{jeavons1997closure,jeavons1998algebraic}.
Here, a \emph{polymorphism} of a relational structure $\struct{A}$ is simply a homomorphism from a finite power of $\struct{A}$ into $\struct{A}$ itself; we denote by $\Pol(\struct{A})$ the set of all polymorphisms of $\struct{A}$, the \emph{polymorphism clone} of $\struct{A}$.
Since taking expansions by pp-definable relations does not lead to an increase in complexity, the study of finite-domain CSPs is subsumed by the study of finite algebras.

Over the past two decades, the link between universal algebra and CSPs was  gradually refined.
After an intermediate stop at %
pp-interpretations~\cite{bulatov2005classifying}, today we know that also pp-constructibility between finite relational structures is fully encoded in their polymorphisms~\cite[Thm.~1.3]{barto2018wonderland}: $\struct{A}'$ is pp-constructible from  $\struct{A}$ if and only if $\Pol(\struct{A}')$ satisfies all \emph{height-1 identities} of $\Pol(\struct{A})$.
By a breakthrough result of Bulatov~\cite{bulatov2017dichotomy} and Zhuk~\cite{zhuk2017proof,zhuk2020proof}, the pp-construction of $\struct{K}_3$ is the unique source of NP-hardness for finite-domain CSPs (if P$\neq$NP), and in fact, they provided a polynomial-time algorithm for any such CSP which does not  pp-construct $\struct{K}_3$. 
Besides its complexity-theoretic significance (in particular, of confirming the dichotomy conjecture of Feder and Vardi~\cite{federvardi1998}), this result has the additional appeal that the border for tractability can be described by a neat universal-algebraic condition on polymorphism clones.
We state the theorem of Bulatov and Zhuk in a formulation which takes into account the results in~\cite{barto2018wonderland,Siggers_2010}.

\begin{theorem}[Bulatov and Zhuk~\cite{bulatov2017dichotomy,zhuk2017proof}] \label{thm:finite_domain_CSP} Let $\struct{A}$ be a finite relational structure.
Either $\struct{A}$ pp-constructs $\struct{K}_3$ and $\CSP(\struct{A})$ is NP-complete, or $\Pol(\struct{A})$ satisfies the \emph{Siggers identity} $ s(x,y,z,x,y,z) \approx s(y,z,x,z,x,y)$ and $\CSP(\struct{A})$ is solvable in polynomial time.  
\end{theorem}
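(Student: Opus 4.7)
The plan is to decompose the statement into three logically independent parts: hardness for templates pp-constructing $\struct{K}_3$, the algebraic dichotomy asserting that these two alternatives exhaust all possibilities, and tractability for templates whose polymorphism clone satisfies the Siggers identity. Only the last of these is a breakthrough; the others are standard modulo the machinery cited in the introduction.

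For hardness, I would invoke the fact — already recalled in the introduction and proven in~\cite{barto2018wonderland} — that pp-constructibility yields a log-space (in fact Datalog) reduction between CSPs. Thus if $\struct{A}$ pp-constructs $\struct{K}_3$, then $\CSP(\struct{K}_3)$, i.e. 3-COLORING, reduces to $\CSP(\struct{A})$, and the textbook NP-completeness of 3-COLORING transfers. This part is purely syntactic and amounts to unfolding the definitions of pp-interpretation and homomorphic equivalence.

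For the algebraic dichotomy, I would rely on \cite[Theorem~1.3]{barto2018wonderland}, which identifies pp-construction of $\struct{K}_3$ with the existence of a clone homomorphism from $\Pol(\struct{A})$ onto the clone of projections on a two-element set, or equivalently with the failure of every non-trivial height-1 identity. After reducing to a core and then to its idempotent reduct, one applies the Taylor--Siggers theorem~\cite{Siggers_2010}: a finite idempotent algebra that admits no such clone homomorphism must satisfy the 6-ary Siggers identity. Combining these two ingredients yields the dichotomy at the level of polymorphism clones exactly as stated.

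The remaining and by far hardest task is to show that satisfaction of Siggers by $\Pol(\struct{A})$ implies polynomial-time solvability of $\CSP(\struct{A})$. Here I would follow one of the two existing roads. Zhuk's approach processes instances by repeatedly restricting the domain of each variable to a carefully chosen subuniverse — an absorbing subuniverse, a centre, or the block of a PC congruence — until the reduced instance either factors through a strictly smaller algebra (where one recurses) or falls into a linear fragment solvable by Gaussian elimination. Bulatov's alternative classifies local structure through coloured edges and a case analysis on the behaviour of three-element subalgebras. In either development the main obstacle, and the conceptual heart of the proof, is twofold: first, one must show that under a Taylor/Siggers condition the relevant structural invariants always exist; second, one must prove that restricting to them is simultaneously sound (no solutions are destroyed) and productive (the measure on instances strictly decreases). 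Every other aspect of the theorem is comparatively routine bookkeeping.
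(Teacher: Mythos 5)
Your decomposition — NP-hardness via the Datalog/log-space reductions furnished by pp-constructions, the algebraic dichotomy via \cite{barto2018wonderland} together with the Taylor–Siggers theorem \cite{Siggers_2010}, and tractability under the Siggers identity via the algorithms of Bulatov and Zhuk — is exactly how this formulation of the theorem is assembled, and it matches the paper, which does not prove the statement but cites \cite{bulatov2017dichotomy,zhuk2017proof} and explicitly notes that the formulation takes \cite{barto2018wonderland} and \cite{Siggers_2010} into account. Since the tractability core is (appropriately) deferred to the original Bulatov/Zhuk proofs rather than reproved, your proposal is correct and takes essentially the same route as the paper.
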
 
 
\subsection{The infinite-domain CSP tractability conjecture}\label{subsection:intro_conjecture}
 
Arguably, the two main reasons for the popularity  of finite-domain CSPs over CSPs which require an infinite template are immediate containment of such problems in the class NP (a homomorphism can be guessed and verified in polynomial time) as well as the above-mentioned applicability of algebraic methods  which   had been developed independently of CSPs for decades. Yet, even within the class NP, finite-domain CSPs are of an extremely restricted kind: already simple problems such as ACYCLICITY of directed graphs (captured by the template $(\mathbb Q;<)$),  various natural coloring problems for graphs such as NO-MONOCHROMATIC-TRIANGLE, and more generally the model-checking problem for  natural restrictions of existential second-order logic such as the logic MMSNP of Feder and Vardi~\cite{federvardi1998}, are beyond its primitive scope despite their containment in NP. 

The quest for a CSP framework including such problems whilst staying within NP and allowing for an algebraic approach akin to the one for finite templates  started with the popularization of \emph{$\omega$-categoricity} by Bodirsky and Ne\v{s}et\v{r}il~\cite{bodirsky2006constraint} as a sufficient structural restriction ensuring the latter:
for countable $\omega$-categorical structures, pp-definability is determined by their polymorphisms and, as was subsequently shown, so are the general reduction of pp-interpretability~\cite{Topo-Birk} as well as the pp-constructibility of $\struct{K}_3$~\cite{barto2018wonderland}.
The  $\omega$-categoricity of $\struct{A}$ can be interpreted as $\struct{A}$ being ``finite modulo automorphisms'':
more precisely, the  action of its  automorphism group $\Aut(\struct{A})$ on $d$-tuples has only finitely many orbits for all finite $d\geq 1$. 
It is, however, a mathematical property with little computational bearing: the CSPs of $\omega$-categorical structures can be monstrous from this perspective~\cite{GJKMP-conf,gillibert2022symmetries}, namely, complete for a variety of complexity classes of arbitrarily high complexity. 
For this reason, and based on strong empirical evidence, Bodirsky and Pinsker~\cite{bodirsky2021projective} identified  a proper subclass of countable $\omega$-categorical structures as a candidate for an algebraic complexity dichotomy extending the theorem of Bulatov and Zhuk which does not seem to suffer from this deficiency.

Their first requirement on $\struct{A}$ is that the orbit under $\Aut(\struct{A})$ of any $d$-tuple is determined by the relations that hold on it: we call a relational structure $\struct{A}$ \emph{homogeneous} if every isomorphism between its finite substructures extends to an automorphism of $\struct{A}$.
This gives an effective way of representing orbits, which are central to $\omega$-categoricity.
They then additionally impose an effective way of determining whether a given finite substructure is contained in $\struct{A}$, which places the CSP in NP:
a relational structure $\struct{A}$ over a finite signature $\tau$ is \emph{finitely bounded}~\cite{macpherson2011survey} if there exists a finite set $\mathcal{N}$ of finite $\tau$-structures (\emph{bounds}) such that a finite $\tau$-structure embeds into $\struct{A}$ if and only if it does not embed any member of $\mathcal{N}$;
equivalently,  the finite substructures are up to isomorphism precisely the finite models of some universal first-order sentence (see e.g.~\cite[Lem.~2.3.14]{bodirsky2021complexity}).
\begin{example} \label{ex:properties_of_q}
    A standard example of a countable finitely bounded homogeneous structure is $(\mathbb{Q};<)$. Its finite substructures are all finite strict linear orders, which can be axiomatized by irreflexivity, totality, and transitivity:
    \[ 
    \forall x,y,z \big( \neg( x<x) \wedge ( x<y \vee x=y \vee  y<x) \wedge  (x<y \wedge y<z \Rightarrow x<z) \big).
    \]
    Moreover, every isomorphism between two such finite substructures can be extended to an automorphism of $(\mathbb{Q};<)$, e.g.~by a piecewise affine transformation.
\end{example}

Finite boundedness and homogeneity taken together are a very strong assumption.
Bodirsky and Pinsker observed that for all practical purposes, in particular the applicability of polymorphisms to complexity as well as containment in NP, it is sufficient that the template $\struct{A}$ be a \emph{first-order reduct} of a structure $\struct{B}$ enjoying these, i.e.  first-order definable therein. 
 This yields  sufficient flexibility to model a huge class of computational problems which includes in particular the ones mentioned above.  Requiring $\struct{A}$ to be a \emph{reduct} of a finitely bounded homogeneous structure $\struct{B}$, i.e.~obtained from $\struct{B}$ by forgetting relations,  turns out to be equivalent (see~\cite[Prop.~7]{baader_rydval}), and we shall find this  approach convenient. 
 \begin{example} \label{ex:properties_of_q1} The CSP of the template $(\mathbb Q;\{(x,y,z) \mid x<y<z \vee z<y<x\})$, first-order definable in $(\mathbb{Q};<)$, is the classical NP-complete \emph{betweenness problem} from the 1970s~\cite{opatrny}. 
 \end{example}
 The following formulation of the original conjecture from 2012 takes into account later progress~\cite{barto2018wonderland,barto_pinsker_proc,barto_pinsker_journal,barto2019equations,BKOPP}.

\begin{conjecture}[Bodirsky and Pinsker~\cite{bodirsky2021projective}]  
  \label{conj:bodirsky_pinsker} Let $\struct{A}$ be a reduct of a countable finitely bounded
homogeneous structure $\struct{B}$. Then exactly one of the following holds.
\begin{itemize}
    \item $\struct{A}$ pp-constructs $\struct{K}_3$ (and consequently, $\CSP(\struct{A})$ is NP-complete);
    \item $\struct{A}$ does not pp-construct $\struct{K}_3$; in this case $\Pol(\struct{A})$ satisfies the \emph{pseudo-Siggers identity}  \[\alpha \circ s(x,y,z,x,y,z) \approx \beta \circ s(y,z,x,z,x,y)\] and  $\CSP(\struct{A})$  is solvable in polynomial time.
\end{itemize} 
\end{conjecture}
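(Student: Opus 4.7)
The conjecture splits into two directions which the plan would treat asymmetrically. For the first bullet, NP-hardness when $\struct{A}$ pp-constructs $\struct{K}_3$ follows from the standard Datalog-level reduction from $\CSP(\struct{K}_3)$ (i.e., 3-COLORING), while containment of $\CSP(\struct{A})$ in NP follows from the finite boundedness of $\struct{B}$: a putative homomorphism can be certified in polynomial time after guessing an orbit-type assignment, since orbits are witnessed by quantifier-free types excluded by a finite set of bounds. More substantively, the contrapositive of the second bullet's algebraic condition is already known to imply pp-constructibility of $\struct{K}_3$: combining Topological Birkhoff with the equivalence, for $\omega$-categorical model-complete core clones, between failure of the pseudo-Siggers identity and the existence of a uniformly continuous height-1 clone homomorphism from $\Pol(\struct{A})$ to the projection clone, one concludes that not satisfying pseudo-Siggers forces pp-construction of $\struct{K}_3$. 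This half is therefore essentially settled in the literature.

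The tractability direction is the genuinely open part, and where the effort must be concentrated. My plan would follow the Bodirsky--Mottet paradigm in four stages. First, pass to the model-complete core of $\struct{A}$, preserving both $\CSP(\struct{A})$ and the relevant algebraic invariants. Second, assuming a Ramsey expansion of $\struct{B}$ is at hand (or first constructing one), extract from $\Pol(\struct{A})$ sufficiently many \emph{canonical} polymorphisms whose behaviour on $\Aut(\struct{B})$-orbits induces a finite algebra $\alg{C}$ that inherits the pseudo-Siggers identity. Third, apply Theorem~\ref{thm:finite_domain_CSP} to $\alg{C}$ to obtain a polynomial-time algorithm for the associated finite-domain CSP. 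Fourth, lift this algorithm to $\CSP(\struct{A})$ by combining a local-consistency (Datalog) procedure with orbit-decomposition, using finite boundedness to keep the search space polynomial.

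The main obstacle is concentrated in the second and fourth stages. From the bare hypothesis that $\Pol(\struct{A})$ satisfies the pseudo-Siggers identity, there is presently no general method for extracting enough canonical polymorphisms to drive a finite-domain reduction; every successful partial verification of the conjecture (reducts of $(\mathbb{Q};<)$, the random graph, homogeneous equivalence relations, homogeneous $K_n$-free graphs, phylogenetic templates, $\MMSNP$) has exploited specific low-arity structure of a particular $\struct{B}$. Moreover, even granted canonical polymorphisms, transferring finite-domain tractability back to the infinite template via Datalog typically requires more than pseudo-Siggers alone, so the hypothesis likely needs to be propagated to stronger height-1 identities such as pseudo-WNU or pseudo-totally-symmetric ones along the way. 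The structural and algebraic simplifications of scope announced in this paper---removing algebraicity, and enforcing essential injectivity of higher-arity polymorphisms---are precisely designed to alleviate the technical burden in stages two and four, and any eventual full proof of the conjecture will almost certainly route through exactly such reductions of scope.
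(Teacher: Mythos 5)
This statement is a \emph{conjecture}, not a theorem of the paper: the paper explicitly treats it as open and offers no proof of it (Section~\ref{subsection:intro_conjecture} states that ``the open part of Conjecture~\ref{conj:bodirsky_pinsker} is only the consequence of polynomial-time tractability''). Your proposal is therefore not comparable to a proof in the paper, and, more importantly, it is not a proof at all of the open direction. What you write about the settled parts is accurate and consistent with the paper's own discussion: NP-membership follows from finite boundedness, NP-hardness from pp-construction of $\struct{K}_3$ is standard, and the implication from ``does not pp-construct $\struct{K}_3$'' to the pseudo-Siggers identity is the Barto--Pinsker theorem~\cite{barto_pinsker_journal} (with the converse for model-complete cores from~\cite{barto2019equations}). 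One small caveat: the pseudo-Siggers satisfaction is established for the model-complete core, and transferring it to $\Pol(\struct{A})$ itself for an arbitrary reduct requires the homomorphic-equivalence composition argument; this is routine but should not be elided.

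The genuine gap is the one you yourself name: the claim that $\CSP(\struct{A})$ is polynomial-time solvable whenever $\struct{A}$ does not pp-construct $\struct{K}_3$. Your four-stage plan (pass to the core, extract canonical polymorphisms over a Ramsey expansion, apply Theorem~\ref{thm:finite_domain_CSP} to the induced finite algebra, lift via Datalog and orbit decomposition) is a reasonable description of the second-generation strategy of~\cite{mottet2024smooth}, but stage two fails in general: there is no known mechanism for producing canonical polymorphisms, let alone enough of them to control the CSP, from the bare pseudo-Siggers hypothesis. Every existing success exploits template-specific structure. Since you concede this, your text is a (correct) survey of the state of the art plus a research program, not a proof, and it should not be presented as resolving the statement. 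The contributions of the present paper --- removing algebraicity (Theorem~\ref{thm:removing_algebraicity}) and enforcing Pol-injectivity (Theorem~\ref{thm:polinjective}) --- are reductions of the conjecture's scope, exactly as you note at the end, and they leave the tractability implication open.
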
 
 
 It is important to note that the open part of Conjecture~\ref{conj:bodirsky_pinsker} is only the consequence of polynomial-time tractability: the negation of the first item does yield the satisfaction of the pseudo-Siggers identity by a theorem due to Barto and Pinsker~\cite{barto_pinsker_journal}. It is even equivalent to it for \emph{model-complete cores}~\cite[Thm.~1.3]{barto2019equations}: an $\omega$-categorical model-complete core is a template $\struct{A}$ whose endomorphisms preserve all orbits of $\Aut(\struct{A})$; an $\omega$-categorical model-complete core exists for all CSPs with an $\omega$-categorical template and is unique up to isomorphism~\cite{bodirsky2007cores}.

\subsection{Promise CSPs}

One of the currently most vibrant branches of research in constraint satisfaction is the study of \emph{Promise CSPs} (PCSPs).
For two relational structures $\struct{S}_1$ and $\struct{S}_2$ such that $\struct{S}_1$ maps homomorphically to $\struct{S}_2$, the problem  $\PCSP(\struct{S}_1,\struct{S}_2)$ asks to decide whether a given finite  structure $\struct{X}$ in the same signature as $\struct{S}_1$ and $\struct{S}_2$ maps homomorphically to $\struct{S}_1$ or does not even map homomorphically to $\struct{S}_2$; the instance $\struct{X}$ is  promised to satisfy one of those two cases. Thus, the problem $\PCSP(\struct{S}_1,\struct{S}_2)$ asks to compare $\CSP(\struct{S}_1)$ to a relaxation $\CSP(\struct{S}_2)$ of it, and  
since $\PCSP(\struct{S}_1,\struct{S}_1)=\CSP(\struct{S}_1)$, promise constraint satisfaction problems generalize the CSP framework. This generalization is vast, and contains many well-known computational problems such as  APPROXIMATE GRAPH COLORING \cite{approximatecolouring}, $(2+\varepsilon)$-SAT \cite{2plusepsilonSAT}, and HYPERGRAPH COLORING~\cite{hypergraphcol}. 
Although formally not necessary,  research has focused on finite templates $(\struct{S}_1,\struct{S}_2)$: this is justified for example by the fact that even for structures over a Boolean domain, there is no complete complexity classification yet. 
Hence, the two extensions considered here of finite-domain CSPs to $\omega$-categorical templates, or alternatively to PCSPs, are somewhat orthogonal, and it is natural to wonder whether there exist any connections, in particular of algorithmic nature.

\subsection{Contributions}\label{section:contributions}

We formulate and affirmatively answer three fundamental questions around Conjecture~\ref{conj:bodirsky_pinsker} that have been implicitly present in the field for some years now.
The first  concerns the scope of Conjecture~\ref{conj:bodirsky_pinsker}; the second the algebraic invariants of templates therein; and the third  its connection with the rapidly evolving  field of (finite-domain) PCSPs. 

 \begin{mainquestion}\label{mq1}
Are there significant additional structural assumptions, perhaps of model-theoretic nature, that  can be imposed onto the structures of Conjecture~\ref{conj:bodirsky_pinsker} without loss of generality, i.e.~without affecting the truth of the conjecture?
\end{mainquestion}

\begin{mainquestion}\label{mq2}
    Are there significant algebraic assumptions that can be imposed on  the polymorphisms of the structures of Conjecture~\ref{conj:bodirsky_pinsker} without loss of generality?
\end{mainquestion}

\begin{mainquestion}\label{mq3}
Are there significant connections between CSPs from  Conjecture~\ref{conj:bodirsky_pinsker} and finite-domain PCSPs?
\end{mainquestion}

\subsubsection{Algebraicity is irrelevant} 
\label{section:introduction_no_alg}

 Conjecture~\ref{conj:bodirsky_pinsker} has been confirmed for many different subclasses. Mottet and Pinsker speak of first- and second-generation classifications~\cite{mottet2024smooth}:
 those of the former kind  can  roughly be described as exhaustive case-by-case analyzes, using Ramsey theory, of the available polymorphisms for the first-order reducts of a fixed finitely bounded homogeneous structure $\struct{B}$ -- extensive complexity classifications such as those for temporal CSPs~\cite{ComplOfTempCSPs}, Graph-SAT problems~\cite{BodPin-Schaefer-both}, Poset-SAT problems~\cite{posetCSP16}, and CSPs with templates first-order definable in arbitrary homogeneous graphs~\cite{BMPP16} were obtained this way.
 Second-generation classifications 
 take a more structured approach mimicking advanced algebraic methods for finite domains; they were  employed, for example,  to achieve classifications for the logic MMSNP~\cite{MMSNP-Journal}, Hypergraph-SAT problems~\cite{mottet_et_al:LIPIcs.ICALP.2024.148}, and certain graph orientation problems~\cite{feller2024algebraic,bitter2024completion}.
 However, even the most advanced methods as described in~\cite{mottet2024smooth} require the assumption of additional abstract structural properties of the template.
 The first aim of this paper is to investigate whether any such property can be assumed without loss of generality when investigating whether Conjecture~\ref{conj:bodirsky_pinsker} holds.

 One such assumption is that the template $\struct{A}$, or even the structure $\struct{B}$ in which it is first-order defined, has \emph{no algebraicity}.
 It is present virtually everywhere in the infinite-domain CSP
literature;
 and in particular an important prerequisite in most of the general results of the theory of \emph{smooth approximations}  developed by Mottet and Pinsker~\cite{mottet2024smooth}, as well as in all results of Bodirsky and Greiner~\cite{bodirsky2020complexity,bodirsky2021tractable} about CSPs of \emph{generic superpositions} of theories.
We will show that the following  strengthening of this property (for model-complete cores, see Proposition~\ref{prop:CSP_injective_no_algebraicity}) can be assumed when resolving Conjecture~\ref{conj:bodirsky_pinsker}: a structure $\struct{A}$ is \emph{CSP-injective} if every finite structure that homomorphically maps to $\struct{A}$ also does so injectively; in other words, if there is a solution to an instance of $\CSP(\struct{A})$, then there is also an injective one.
CSP-injectivity played an essential role in the universal-algebraic proof of the complexity dichotomy for Monotone Monadic SNP~\cite{MMSNP-Journal} and, more recently, certain finitely bounded homogeneous uniform hypergraphs~\cite{mottet_et_al:LIPIcs.ICALP.2024.148}.

In our first main result, Theorem~\ref{thm:removing_algebraicity}, we show that any structure $\struct{A}$ in the scope of Conjecture~\ref{conj:bodirsky_pinsker} can be transformed to a CSP-injective structure $\blowup{\struct{A}}$ such that there are Datalog-reductions in both directions between $\CSP(\struct A)$ and $\CSP\blowup{\struct{A}}$. We further show that $\struct{A}$ pp-constructs $\struct{K}_3$ if and only if $\blowup{\struct{A}}$ pp-constructs $\struct{K}_3$ and essential model-theoretic properties of $\struct{A}$
are passed on to $\blowup{\struct{A}}$.
In particular, $\blowup{\struct{A}}$ is again in the scope of Conjecture~\ref{conj:bodirsky_pinsker} and both tractability
and
the hardness criterion of the conjecture are preserved.

This is essentially done by blowing up every point in $A$ to a countably infinite set and then defining relations on the union of those infinite sets in a way that remembers the structure of $\struct{A}$. Formally, it is achieved by the so-called \emph{relational wreath product} of an infinite set with the structure $\struct{A}$. 

 \medskip 
\begin{center}
\framebox{\parbox{0.85\textwidth}{Throughout the present article, the notation $\struct{C}\wr \struct{A}$ stands for the relational wreath product of two structures $\struct{C}$ and $\struct{A}$, introduced formally in Section~\ref{section:wreath}.}}
\end{center} 

\medskip

The only assumption necessary to guarantee Datalog-interreducibility of 
$\CSP(\struct{A})$ and $\CSP\blowup{\struct{A}}$
is that $\struct{A}$ is \emph{non-contractible}, i.e.~its CSP is not the CSP of a one-element structure.

\begin{restatable}{theorem}{maintheorem}   \label{thm:removing_algebraicity}
Let $\struct{B}$ be a countable structure over a finite relational signature. Then 
\begin{enumerate}
    \item \label{item:Bremovingalgebraicity} $\ordblowup{\struct{B}}$ is a countably infinite structure without algebraicity over a finite relational signature.
    \item \label{item:Bomegacat} If $\struct{B}$ is $\omega$-categorical, then so is $\ordblowup{\struct{B}}$.
    \item \label{item:BhomogRamsey} If $\struct{B}$ is homogeneous, then so is $\ordblowup{\struct{B}}$. In this case, $\struct{B}$ is Ramsey if and only if  $\ordblowup{\struct{B}}$ is Ramsey.
    \item \label{item:Bfinbounded} If $\struct{B}$ is finitely bounded, then so is $\smash{\ordblowup{\struct{B}}}$.
\end{enumerate}

Moreover, if $\struct{A}$ is a non-contractible reduct of $\struct{B}$, then 
\begin{enumerate}\addtocounter{enumi}{4}
    \item \label{item:ACSPinj} $\blowup{\struct{A}}$ is a non-contractible CSP-injective reduct of $\ordblowup{\struct{B}}$.
    \item \label{item:ADatalog} $\CSP(\struct{A})$ and $\CSP\blowup{\struct{A}}$ are Datalog-interreducible.
    \item \label{Amodelcomplcore} If $\struct{A}$ is $\omega$-categorical, then so is $\blowup{\struct{A}}$, and $\struct{A}$ is a  model-complete core if and only if  $\blowup{\struct{A}}$ is. 
    \item \label{AppconstructK3}  If the number of orbits of $d$-tuples of $\struct{A}$  grows slower than $2^{2^{d}}$, then $\struct{A}$ pp-constructs $\struct{K}_3$ if and only if $\blowup{\struct{A}}$ pp-constructs $\struct{K}_3$. 
\end{enumerate}
 \end{restatable} 

 It is well-known and  not hard to see that item~\ref{AppconstructK3} of Theorem~\ref{thm:removing_algebraicity} applies to reducts of homogeneous structures in a finite relational signature~\cite{macpherson2011survey}, in particular  to all structures in the scope of Conjecture~\ref{conj:bodirsky_pinsker}.

\subsubsection{Polymorphisms are injective}
 
Comparing Theorem~\ref{thm:finite_domain_CSP} with Conjecture~\ref{conj:bodirsky_pinsker}, one notices the replacement of the Siggers identity by its (weaker) pseudo-variant. This is not just an inconvenience arising from the proof of~\cite[Thm.~1.6]{barto_pinsker_journal} by Barto and Pinsker (or the more recent proof in~\cite{BBKMP23}), but there are examples showing the necessity to weaken the condition. 
\begin{example} \label{ex:example_I4}
Consider the template $(\mathbb Q;I_4)$, where $I_4$ is defined by
\[I_4 \coloneqq \{ (x,y,u,v) \mid x=y \Rightarrow u=v\}.\] Its CSP is solvable in Datalog (Sections~\ref{section:fp} and~\ref{section:everything_together}). 
Since $\CSP(\struct{K}_3)$ is not, %
and pp-constructions provide Datalog-reductions~\cite{atserias2009affine}, %
it follows that $(\mathbb{Q};I_4)$ does not pp-construct $\struct{K}_3$. Hence, its polymorphisms satisfy the pseudo-Siggers identity by the above-mentioned theorem of Barto and Pinsker.
However, any polymorphism of $I_4$ is injective up to dummy variables~\cite{BodChenPinsker}, and hence cannot satisfy the Siggers identity. This makes it clear that the characterization of tractability of CSPs by the Siggers identity cannot be lifted from finite to infinite domains.
\end{example}

One prominent example of a condition that can actually be lifted is the satisfiability of \emph{quasi near-unanimity identities}, which captures \emph{bounded strict width} of CSPs both in the finite and in the infinite~\cite{bodirsky2013datalog}; bounded strict width corresponds to solvability in a particular proper fragment of Datalog that is not closed under the Datalog-reductions in Theorem~\ref{thm:removing_algebraicity}.

In order to shed light on which algebraic criteria might be applicable to characterize tractability (or perhaps finer complexity classes) within the scope of Conjecture~\ref{conj:bodirsky_pinsker}, we investigate whether one can impose any assumptions on the polymorphisms of the templates therein without loss of generality. We say that a template is \emph{Pol-injective} if all of its polymorphisms are \emph{essentially injective}, i.e.~injective up to dummy variables -- this is the case if and only if the relation $I_4$ is invariant under its polymorphisms.  
 As it turns out, the relation $I_4$ can be added to the templates constructed in Theorem~\ref{thm:removing_algebraicity} at no computational cost (up to Datalog reductions). 
 It thus follows that any algebraic condition on polymorphisms capturing a complexity class closed under Datalog-reductions for CSPs of structures within the scope of Conjecture~\ref{conj:bodirsky_pinsker} must necessarily be satisfiable by essentially injective operations.
Until now, various  concrete examples of natural templates all of whose polymorphisms are essentially injective had pointed towards  a statement of this kind (see, e.g.~\cite{barto2019equations}); we provide a rigorous confirmation.

\begin{restatable}{theorem}{maintheoremtwo}    \label{thm:polinjective}
The structures $ \blowupi{\struct{A}} $ and  $ \ordblowupi{\struct{B}} $ enjoy all properties of $\blowup{\struct{A}}$ and $\ordblowup{\struct{B}}$ from Theorem~\ref{thm:removing_algebraicity} except that CSP-injectivity is replaced by Pol-injectivity.   
\end{restatable}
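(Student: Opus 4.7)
The plan is to observe that $\blowupi{\struct{A}}$ and $\ordblowupi{\struct{B}}$ are obtained from $\blowup{\struct{A}}$ and $\ordblowup{\struct{B}}$ by expansion with the single relation $I_4$, and that since $I_4(x,y,u,v) \Leftrightarrow (x=y \Rightarrow u=v)$ is definable purely from equality, it is preserved by every bijection of the domain. Consequently, $\Aut(\blowupi{\struct{A}}) = \Aut(\blowup{\struct{A}})$ and $\Aut(\ordblowupi{\struct{B}}) = \Aut(\ordblowup{\struct{B}})$. This immediately transfers $\omega$-categoricity, absence of algebraicity, and the orbit growth bound from Theorem~\ref{thm:removing_algebraicity} to the expanded structures. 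Homogeneity transfers because the $I_4$-type of a tuple is determined by its equality-type, so partial isomorphisms between finite substructures in the two languages coincide; the Ramsey property is an invariant of the automorphism group of an $\omega$-categorical homogeneous structure via Kechris--Pestov--Todor\v{c}evi\'c, hence it also transfers. Finite boundedness holds because $I_4$ admits a finite universal axiomatization that can be appended to the bounds produced by Theorem~\ref{thm:removing_algebraicity}. Preservation of the model-complete core property requires a short argument adapting the corresponding part of Theorem~\ref{thm:removing_algebraicity}, using that endomorphisms of $\blowupi{\struct{A}}$ are precisely the injective endomorphisms of $\blowup{\struct{A}}$.

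Pol-injectivity of $\blowupi{\struct{A}}$ is immediate: a polymorphism preserves $I_4$ if and only if it is essentially injective. For the Datalog-interreducibility of $\CSP(\blowupi{\struct{A}})$ and $\CSP(\struct{A})$, one direction is trivial since $\blowupi{\struct{A}}$ is an expansion of $\blowup{\struct{A}}$. For the reverse direction, CSP-injectivity of $\blowup{\struct{A}}$ is crucial: an instance of $\CSP(\blowupi{\struct{A}})$ admits a solution if and only if the instance obtained by discarding all $I_4$-atoms is solvable in $\CSP(\blowup{\struct{A}})$ and, for every $I_4(x,y,u,v)$ constraint in which $x$ and $y$ are the same variable, the variables $u$ and $v$ are also the same. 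This consistency check is Datalog-computable and can be composed with the Datalog reduction from Theorem~\ref{thm:removing_algebraicity}.

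The main obstacle is item~\ref{item:4}: preservation of pp-constructibility of $\struct{K}_3$. The forward direction is trivial, and by Theorem~\ref{thm:removing_algebraicity}\ref{item:4} applied to $\struct{A}$, it suffices to show that whenever $\blowupi{\struct{A}}$ pp-constructs $\struct{K}_3$, so does $\blowup{\struct{A}}$. Contrapositively, by the theorem of Barto and Pinsker, if $\Pol(\blowup{\struct{A}})$ admits a pseudo-Siggers witness $s$, I must produce an essentially injective pseudo-Siggers witness in $\Pol(\blowupi{\struct{A}})$. The key technical step exploits CSP-injectivity of $\blowup{\struct{A}}$: the essentially injective polymorphisms are pointwise-dense in $\Pol(\blowup{\struct{A}})$ via an $\omega$-categoricity-based compactness argument, which allows one to replace $s$ by an essentially injective polymorphism while preserving the pseudo-Siggers identity (the pseudo-version is crucial here, as a genuinely Siggers-satisfying operation cannot be essentially injective on all six coordinates). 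Care is required because merely composing $s$ with an injective endomorphism does not enforce essential injectivity; the replacement must be constructed more delicately, likely via a canonization procedure in the style of Bodirsky and Pinsker applied to the Ramsey base $\ordblowup{\struct{B}}$, together with a subsequent correction of the outer endomorphisms witnessing the pseudo-identity.
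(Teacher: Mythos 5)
Your treatment of the structural items (unchanged automorphism group, $\omega$-categoricity, no algebraicity, homogeneity, Ramsey via KPT, finite boundedness, model-complete cores, and Pol-injectivity itself) follows the paper's lines. However, your Datalog reduction from $\CSP(\blowupi{\struct{A}})$ to $\CSP(\blowup{\struct{A}})$ is incorrect as stated. The criterion ``the $I_4$-free part is solvable and every constraint $I_4(x,y,u,v)$ in which $x$ and $y$ are the same variable has $u$ and $v$ the same variable'' is not equivalent to solvability: the instance with domain $\{a,c,d\}$ whose only constraint is $I_4(a,a,c,d)$ is satisfiable in $\blowupi{\struct{A}}$ (map $c$ and $d$ to the same element), yet your check rejects it. The point is that $I_4$-constraints force \emph{identifications} that must be propagated: from a forced identification of $x$ and $y$ one must infer that of $u$ and $v$, and this cascades through further $I_4$-constraints. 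This is why the paper's reduction (Proposition~\ref{prop:expans_datalog}) computes, by a genuine Datalog fixed point, the least equivalence relation $\approx$ closed under $I_4(u,v,x,y)\wedge(u\approx v)\Rightarrow(x\approx y)$, factors the instance by $\approx$, and only then uses CSP-injectivity of $\blowup{\struct{A}}$ to go back; a purely syntactic one-step check is not a reduction.

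The more serious gap is item~\ref{item:4}. Your plan hinges on replacing a pseudo-Siggers witness $s\in\Pol(\blowup{\struct{A}})$ by an essentially injective one via ``pointwise density'' of essentially injective polymorphisms. That density claim is false: by Proposition~\ref{prop:pol_and_CSP_injectivity} the CSP-injective, $\omega$-categorical structure $\blowup{\struct{A}}$ has a polymorphism that is not essentially injective, and its behaviour on a suitable finite set (witnesses of essentiality for each non-dummy coordinate together with a collision pair) cannot be matched by any essentially injective operation. Moreover, even granting some interpolation, the real difficulty is to injectivize \emph{coherently on both sides} of the pseudo-identity; your appeal to an unspecified canonization over $\ordblowup{\struct{B}}$ with a ``correction of the outer endomorphisms'' is exactly the part that needs a proof, and none is given. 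The paper avoids this entirely: it passes to the model-complete core $\struct{C}$ of $\struct{A}$ (needed to apply Theorems~1.3 and~3.4 of~\cite{barto2019equations}, a step your sketch omits), takes pseudo-Siggers witnesses $\alpha,\beta,s\in\Pol(\struct{C})$ and, separately, pseudo-Siggers witnesses $\alpha',\beta',s'$ of $(\mathbb{Q};\neq,I_4)$, which is itself an $\omega$-categorical model-complete core not pp-constructing $\struct{K}_3$, and combines them coordinatewise on $\mathbb{Q}\times C$ using an injection $e\colon\mathbb{Q}\times C\to\mathbb{Q}$ in the first coordinate, e.g.\ $s^{\ast}((x_1,y_1),\dots,(x_6,y_6))=(s'(e(x_1,y_1),\dots,e(x_6,y_6)),\,s(y_1,\dots,y_6))$. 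This yields pseudo-Siggers polymorphisms of $\blowupi{\struct{C}}$, which is shown to be the model-complete core of $\blowupi{\struct{A}}$, and then the orbit-growth hypothesis together with Theorem~3.4 of~\cite{barto2019equations} gives that $\blowupi{\struct{A}}$ does not pp-construct $\struct{K}_3$. Without an argument of this kind, or a genuine proof of your interpolation step, the key direction of item~\ref{item:4} remains unproven.
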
 

For finite structures the only essentially injective polymorphisms are at most unary up to dummy variables.
It is therefore of no surprise that so few universal-algebraic conditions for CSPs from the finite have been successfully lifted to the infinite. 
Examples of conditions for polymorphism clones that can be satisfied by essentially injective operations are any \emph{pseudo height-1 identities} %
(e.g., the pseudo-Siggers identity), but also the \emph{dissected near-unanimity identities}~\cite{gillibert2022symmetries} which were proposed in~\cite{bodirsky2022descriptive} as a possible candidate for solvability of CSPs in \emph{Fixed-Point logic with Counting} (FPC).

Marimon and Pinsker have recently provided a negative answer, within the scope of  Conjecture~\ref{conj:bodirsky_pinsker}, to a question of Bodirsky~\cite[Q.~14.2.6(27)]{bodirsky2021complexity} asking whether every $\omega$-categorical CSP template without algebraicity solvable in Datalog has a binary injective polymorphism~\cite[Cor.~8.10]{MarimonPinsker23}.  We contrast this negative result with a corollary to Theorem~\ref{thm:polinjective} that implies that up to Datalog reductions the answer is positive.
 
\begin{restatable}{corollary}{binaryInj}  \label{cor:binaryInj} 
  Let $\struct{A}$ be a relational structure that 
  does not pp-construct $\struct{K}_3$. Then $\blowupi{\struct{A}}$ has a binary injective polymorphism. 
\end{restatable}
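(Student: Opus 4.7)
The plan is to combine Theorem~\ref{thm:polinjective} with the classical theorem of Barto and Pinsker~\cite{barto_pinsker_journal}. First, I would invoke item~(\ref{item:4}) of Theorem~\ref{thm:removing_algebraicity} (which carries over to Theorem~\ref{thm:polinjective}); it applies here because reducts of finitely bounded homogeneous structures in a finite relational signature have at most doubly exponential orbit growth. This yields that $\blowupi{\struct{A}}$ pp-constructs $\struct{K}_3$ if and only if $\struct{A}$ does, so by hypothesis neither does $\blowupi{\struct{A}}$. Consequently, by the theorem of Barto and Pinsker, $\Pol(\blowupi{\struct{A}})$ satisfies the pseudo-Siggers identity: there exist a $6$-ary polymorphism $s$ and unary polymorphisms $\alpha, \beta$ with $\alpha \circ s(x,y,z,x,y,z) \approx \beta \circ s(y,z,x,z,x,y)$. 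By Theorem~\ref{thm:polinjective}, $s$ is essentially injective.

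The key step is to show that $s$ depends on at least two of its arguments. Suppose for contradiction $s(x_1,\ldots,x_6) = g(x_i)$ for a single position $i$ and some injective unary endomorphism $g$. Since the two $6$-tuples $(x,y,z,x,y,z)$ and $(y,z,x,z,x,y)$ differ at \emph{every} coordinate, evaluating the pseudo-Siggers identity at position $i$ reduces to $\alpha(g(a)) \approx \beta(g(b))$ for all $a, b$. This would force $\alpha \circ g$ to be a constant unary polymorphism of $\blowupi{\struct{A}}$. However, $\blowupi{\struct{A}}$ retains the disequality relation $\neq$ by construction (and is non-trivial since $\struct{A}$ is), and a constant map cannot preserve $\neq$; contradiction.

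Hence $s$ has at least two essential positions $i \neq j$. I then define the binary polymorphism $t(x,y) := s(\bar x)$ by setting $\bar x_j = y$ and $\bar x_k = x$ for all $k \neq j$. Varying $y$ changes only the essential coordinate $j$, so essential injectivity of $s$ ensures that $t$ depends on $y$; varying $x$ changes all essential coordinates of $s$ other than $j$ (a non-empty set, since it contains $i$) to a common new value, so the same essential injectivity forces $t$ to depend on $x$ as well. Being essentially injective by Theorem~\ref{thm:polinjective} and depending on both arguments, $t$ is a genuinely injective binary polymorphism of $\blowupi{\struct{A}}$, as required. The main obstacle is the contradiction in the middle step ruling out the essentially unary case: it hinges on the explicit presence of $\neq$ in the construction of $\blowupi{\struct{A}}$ together with the non-triviality assumption on $\struct{A}$, without which constant endomorphisms could subvert the substitution argument.
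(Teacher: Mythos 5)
Your proposal is correct in its overall strategy but takes a genuinely different route from the paper's. The paper also starts by invoking Theorem~\ref{thm:polinjective}\eqref{item:4} to conclude that $\blowupi{\struct{A}}$ does not pp-construct $\struct{K}_3$, but then passes to the model-complete core $\blowupi{\struct{C}}$ (where $\struct{C}$ is the core of $\struct{A}$), cites \cite[Theorem~3.14]{MarimonPinsker23} to obtain a binary \emph{essential} polymorphism of that core, transfers it back by composing with the homomorphisms witnessing the homomorphic equivalence, and finally uses Pol-injectivity to upgrade ``essential'' to ``injective''. You instead manufacture the binary essential operation yourself: the pseudo-Siggers identity together with the presence of $\neq$ rules out an essentially unary $s$ (a constant endomorphism would be forced), and identifying variables on two essential coordinates then yields the binary injective polymorphism. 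This is a sound and more self-contained argument: it replaces the external Marimon--Pinsker citation by the Barto--Pinsker theorem that the paper already uses elsewhere, and your identification step is correct (injectivity of the map induced by $s$ on its essential coordinates immediately gives injectivity of $t$ as a binary map, since the two blocks of identified variables hit two distinct essential coordinates).

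One point needs care. The pseudo-Siggers identity is guaranteed by \cite{barto2019equations} for the \emph{model-complete core}, and this is how the present paper itself invokes that theorem in its proof of item~\eqref{item:4}; the informal statement after Conjecture~\ref{conj:bodirsky_pinsker} glosses over this. Since $\blowupi{\struct{A}}$ need not be a model-complete core (it is one if and only if $\struct{A}$ is, by item~\eqref{item:1}), you should run your argument inside $\blowupi{\struct{C}}$ --- which still contains $\neq$ and $I_4$ and is hence Pol-injective, so every step goes through verbatim --- and then transfer the resulting binary injective polymorphism back to $\blowupi{\struct{A}}$ via the homomorphisms of the homomorphic equivalence; these preserve $\neq$ and are therefore injective, so the composed binary operation remains injective. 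With this small repair the proof is complete.
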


\subsubsection{Everything is a cheese}
While  research on both infinite-domain CSPs and finite-domain PCSPs has been very fruitful in recent years, few connections between these two branches have been discovered so far. 
 One natural link is the use of  CSP templates as cheeses in the so-called sandwich method to prove tractability of PCSPs, as follows. 
Note that if $\struct{S}_1$ maps homomorphically to $\struct{A}$ and $\struct{A}$ maps homomorphically to $\struct{S}_2$, then $\PCSP(\struct{S}_1,\struct{S}_2)$ trivially reduces to $\CSP(\struct{A})$; we call $\struct{A}$ a \emph{cheese sandwiched} by the template $(\struct{S}_1,\struct{S}_2)$. 
This simple observation is more powerful than it might first appear: in fact, so far, every known tractable (finite-domain)  PCSP can be reduced to the tractable CSP of some cheese structure via the sandwich method~\cite{PromiseandInfDomCSPs,sandwichespromiseconstraintsatisfaction}. 
On the other hand, it can be necessary to pick large finite  cheeses even for PCSPs on a Boolean domain~\cite{smallpcspsreducetolarge}, and Barto~\cite{promisesmakefiniteproblemsinfinitary} even provided an example of a 
PCSP template sandwiching an infinite polynomial-time tractable cheese while not being \emph{finitely tractable}, i.e.~such that every finite-domain cheese has an NP-complete CSP~\cite{promisesmakefiniteproblemsinfinitary}. Since Barto's cheese is not $\omega$-categorical, he raised the question whether such an example could also be constructed with an $\omega$-categorical cheese (\cite[Q.~IV.1]{promisesmakefiniteproblemsinfinitary}). Subsequently the question arose, and was promoted by Zhuk at the CSP World Congress~2023, whether structures within the scope of Conjecture~\ref{conj:bodirsky_pinsker} could \emph{ever} serve as tractable cheeses in the absence of a finite one. 

The first two examples of tractable $\omega$-categorical cheeses for non-finitely tractable finite-domain PCSPs were recently obtained in~\cite{Mottet_2025}; these structures fall within the scope of Conjecture~\ref{conj:bodirsky_pinsker}.
In the present paper, we strengthen this result by creating non-finitely tractable finite-domain PCSPs from virtually every structure $\struct{A}$ within the scope of Conjecture~\ref{conj:bodirsky_pinsker}. 
Our construction consists of two steps. 
First, we show that such a construction is possible in general under the additional assumption that $\struct{A}$ is equipped with an inequality predicate and that it is a reduct of a structure $\struct{B}$ which is linearly ordered. 
We then combine this construction  with Theorem~\ref{thm:removing_algebraicity}.
Modulo an extra step consisting of taking \emph{generic superpositions} (Proposition~\ref{prop:generic_superpositions}) of structures with $(\mathbb{Q};<)$ (for which we need the property of no algebraicity), this finishes our proof.

\begin{restatable}{theorem}{sandwichthm} \label{thm:main_theorem_sandwiches}  
Let $\struct{A}$ be a non-contractible reduct of a countable finitely bounded homogeneous structure $\struct{B}$.
Then there exists a reduct $\struct{A}'$ of a countable finitely bounded homogeneous structure $\struct{B}'$ together with Datalog-reductions $\mathcal{J}$ and $\mathcal{J}'$ from $\CSP(\struct{A})$ to $\CSP(\struct{A}')$ and vice versa such that, for every $n\in \mathbb{N}$, there exists a finite PCSP template $(\struct{S}_1,\struct{S}_2)$ with the following properties:  
 \begin{enumerate} 
  \item \label{item:sandwichthm5} $\struct{A}'$ is a cheese for $\PCSP(\struct{S}_1,\struct{S}_2)$;   thus, $\PCSP(\struct{S}_1,\struct{S}_2)$ reduces to $\CSP(\struct{A})$ via $\mathcal{J}'$.  
       \item \label{item:sandwichthm6}  $\mathcal{J}$ is correct as a reduction from $\CSP(\struct{A})$ to  $\PCSP(\struct{S}_1,\struct{S}_2)$ when limited to instances $\struct{X}$ with the property that $\struct{X}$ homomorphically maps to $\struct{A}$ if and only if $\struct{X}$ homomorphically maps to a substructure of $\struct{A}$ of size $\leq n$. 
       \item \label{item:sandwichthm4} Every finite cheese for $\PCSP(\struct{S}_1,\struct{S}_2)$ pp-constructs $\struct{K}_3$; thus $\PCSP(\struct{S}_1,\struct{S}_2)$ is not finitely tractable.  
   \item \label{item:sandwichthm1} $\struct{A}$ pp-constructs $\struct{K}_3$ if and only if  $\struct{A}'$ pp-constructs $\struct{K}_3$.  
 \item \label{item:sandwichthm2} If $\struct{A}$ is a model-complete core, then $\struct{A}'$ is a model-complete core as well.
        \item \label{item:sandwichthm3} If $\struct{B}$ is Ramsey, then $\struct{B}'$ is Ramsey as well. 
   \end{enumerate} 
   
 \end{restatable}  

Item~\ref{item:sandwichthm6} is not necessary for obtaining non-finitely tractable PCSPs from Conjecture~\ref{conj:bodirsky_pinsker}, but provides an additional link that makes it possible to transfer logical inexpressibility results from infinite-domain CSPs to PCSPs, e.g., for FPC (see Section~\ref{section:inexpressibilitytransfer}).

By omitting the part of the proof of Theorem~\ref{thm:main_theorem_sandwiches} which relies on Theorem~\ref{thm:removing_algebraicity} (i.e.~removing algebraicity and adding a linear order), we additionally get that $\struct{A}$ and $\struct{A}'$ are pp-bi-interpretable (which leads to \emph{gadget reductions} between their CSPs) at the cost of dropping the requirement that the resulting PCSP is non-finitely tractable. However, this construction still generalizes the two concrete examples of non-finitely tractable PCSPs from~\cite{Mottet_2025}.

\begin{restatable}{theorem}{sandwichthmbasic}   \label{thm:main_theorem_sandwiches_basic}  
Let $\struct{A}$ be a reduct of a countable finitely bounded homogeneous structure $\struct{B}$.
Then there exists a reduct $\struct{A}'$ of a countable finitely bounded homogeneous structure $\struct{B}'$ such that 
$\struct{A}$ and $\struct{B}$ are pp-bi-interpretable with $\struct{A}'$ and $\struct{B}'$, respectively, and such that for all  $n\in \mathbb{N}$, there exists a finite PCSP template $(\struct{S}_1,\struct{S}_2)$ and gadget-reductions $\mathcal{J}$ and $\mathcal{J}'$ from $\CSP(\struct{A})$ to $\CSP(\struct{A}')$ and vice versa such that
\begin{enumerate} 
  \item \label{item:sandwichbasic1} $\struct{A}'$ is a cheese for $\PCSP(\struct{S}_1,\struct{S}_2)$;   thus, $\PCSP(\struct{S}_1,\struct{S}_2)$ reduces to $\CSP(\struct{A})$ via  $\mathcal{J}'$;
  \item \label{item:sandwichbasic2} $\mathcal{J}$ is correct as a reduction from $\CSP(\struct{A})$ to  $\PCSP(\struct{S}_1,\struct{S}_2)$ when limited to instances $\struct{X}$ with the property that $\struct{X}$ homomorphically maps to $\struct{A}$ if and only if it homomorphically maps to a substructure of $\struct{A}$ of domain size $\leq n$.  
\end{enumerate} 
   
\end{restatable}  

By a well-known result due to Barto, Kozik, Larose, Z\'{a}dori, Atserias, Bulatov, and Dawar, a finite-domain CSP is expressible in FPC if and only if it does not pp-construct any non-trivial finite Abelian group~\cite[Thm.~1.1]{bodirsky2022descriptive}.
This correspondence does not extend to the CSPs in the scope of Conjecture~\ref{conj:bodirsky_pinsker};
in fact, it already fails within \emph{temporal CSPs}\textemdash the CSPs of the first-order reducts of $(\mathbb{Q};<)$~\cite[Thm.~1.5]{bodirsky2022descriptive}. 
However, using Theorem~\ref{thm:main_theorem_sandwiches_basic} and the results in~\cite{bodirsky2022descriptive}, one can show that a temporal CSP is expressible in FPC if and only if it does not pp-construct any finite-domain PCSP inexpressible in FPC.
We remark that this fact can essentially already be derived from the results in~\cite{bodirsky2022descriptive,Mottet_2025}.
\begin{theorem}\label{thm:temporal_situation} A temporal CSP is expressible in FPC if and only if it does not pp-construct any finite-domain PCSP inexpressible in FPC.
\end{theorem}

\subsubsection{Phylogeny CSPs}
We initiate a new case study\textemdash of phylogeny CSPs\textemdash from the perspective of descriptive complexity.
Phylogeny CSPs are the CSPs of structures first-order definable in the (finitely bounded) homogeneous $C$-relation, and hence fall into the scope of Conjecture~\ref{conj:bodirsky_pinsker}.
As the name suggests, phylogeny CSPs can be used to model certain decision problems arising in evolutionary biology.
We show that similarly as in the temporal case, there exists a (tractable) phylogeny CSP which pp-constructs a finite-domain PCSP inexpressible in FPC but it does not pp-construct any such finite-domain CSP.
This provides further evidence that the 
complexity of the CSPs in the scope of Conjecture~\ref{conj:bodirsky_pinsker} is intimately connected to the complexity of finite-domain PCSPs.

\begin{restatable}{theorem}{tpppconstruct}   \label{thm:temporal_phylo_pp_constr}  
 There is a tractable phylogeny CSP that:
\begin{enumerate}  
    \item \label{item:finite_reason_1} pp-constructs a finite-domain PCSP inexpressible in FPC; 
    \item \label{item:finite_reason_2} does  not pp-construct any finite-domain CSP inexpressible in FPC.
\end{enumerate}
   
\end{restatable}

\subsection{Related work}
Our results in Theorem~\ref{thm:main_theorem_sandwiches} compare to the above-mentioned examples of tractable $\omega$-categorical cheeses for finite-domain PCSPs that are not finitely tractable from~\cite[Props.~35 and~36]{Mottet_2025} as follows. 

First, the PCSPs from~\cite[Thm.~2]{Mottet_2025} are reducible to the associated $\omega$-categorical templates under gadget reductions.
The main advantage of gadget reductions over general Datalog-reductions is that, in many settings, gadget reductions between (P)CSPs are captured by the transfer of height-1 identities between polymorphism clones (or more generally \emph{polymorphism minions}); see~\cite{dalmau2024local}. 
While Theorem~\ref{thm:main_theorem_sandwiches_basic} generalizes these cases, its construction fails to ensure that the resulting PCSP is not finitely tractable in general. To address this, Theorem~\ref{thm:main_theorem_sandwiches} employs Datalog-reductions rather than gadget-reductions. Notably, it can be shown that in many cases, the resulting finite-domain PCSPs do not admit a gadget reduction to the original $\omega$-categorical template (Example~\ref{ex:two_ex}). 

Second,~\cite{Mottet_2025} also contains a result (Theorem~1) showing that every CSP within the scope of the Bodirsky-Pinsker conjecture is polynomial-time equivalent to the PCSP of a half-infinite template, more precisely a template $(\struct{S}_1,\struct{S}_2)$ such that $\struct{S}_1$ is in the scope of the Bodirsky-Pinsker conjecture and $\struct{S}_2$ is finite.
In contrast, our Theorem~\ref{thm:main_theorem_sandwiches} provides non-finitely tractable finite-domain PCSP templates $(\struct{S}_1,\struct{S}_2)$ sandwiching a given structure $\struct{A}$ from Conjecture~\ref{conj:bodirsky_pinsker} up to Datalog interreducibility.
Since our PCSP templates $(\struct{S}_1,\struct{S}_2)$ are finite and not half-infinite, we cannot guarantee polynomial-time equivalence between $\PCSP(\struct{S}_1,\struct{S}_2)$ and $\CSP(\struct{A})$. 
However, we may still approximate $\CSP(\struct{A})$ to an arbitrary degree in the sense of Theorem~\ref{thm:main_theorem_sandwiches}(\ref{item:sandwichthm6}).

\subsection{Organization of this paper}
In Section~\ref{section:preliminaries}, we provide definitions and general notation. In Section~\ref{section:algebraicity_irrelevant}, we show that one can without loss of generality assume that the templates in Conjecture~\ref{conj:bodirsky_pinsker} have no algebraicity and essentially injective polymorphisms by providing proofs of Theorem~\ref{thm:removing_algebraicity}, Theorem~\ref{thm:polinjective}, and Corollary~\ref{cor:binaryInj}. Theorem~\ref{thm:removing_algebraicity} is then used in Section~\ref{section:sandwiches} to construct sandwiches of non-finitely tractable finite-domain PCSPs from all structures in the scope of the aforementioned conjecture via Theorem~\ref{thm:main_theorem_sandwiches}.
At the end of Section~\ref{section:sandwiches}, we also provide a full proof of Theorem~\ref{thm:temporal_situation}.
Finally, in Section~\ref{section:phylo}, we investigate the class of phylogeny CSPs from the perspective of the descriptive complexity. We provide an example of a phylogeny problem that is not definable in FPC and show that this inexpressibility result transfers to the finite-domain PCSP constructed from this template via Theorem~\ref{thm:main_theorem_sandwiches_basic}, ultimately proving Theorem~\ref{thm:temporal_phylo_pp_constr}. 
At the end of Section~\ref{section:phylo} and in Section~\ref{section:conclusion} we moreover use the opportunity to pose new open questions which we believe need to be addressed in order to assure continued progress on Conjecture~\ref{conj:bodirsky_pinsker}. Section~\ref{section:conclusion}  also provides a more detailed discussion on specific aspects of our results.

\section{Preliminaries} \label{section:preliminaries} 

The set $\{1,\dots,n\}$ is denoted by $[n]$, and we use the bar notation $\bar{t}$ for tuples.
The \emph{component-wise action} of a function $f\colon A^n \rightarrow B$ on $k$-tuples is given by \[f\big((x_{1,1},\dots, x_{1,k}),\dots, (x_{n,1},\dots, x_{n,k}) \big)\coloneqq \big(f(x_{1,1},\dots,x_{n,1}),\dots, f(x_{1,k},\dots,x_{n,k}) \big).\] 

\subsection{Relational structures} 
A (\emph{relational}) \emph{signature} $\tau$ is a set of \emph{relation symbols}, each $R\in\tau$ with an associated natural number called \emph{arity}. 
A (\emph{relational}) \emph{$\tau$-structure} $\struct{A}$ consists of a set $A$ (the \emph{domain}) together with the relations $R^{\struct{A}}\subseteq A^{k}$ for each $R\in \tau$ with arity $k$.
An \emph{expansion} of $\struct{A}$ is a $\sigma$-structure $ \struct{B}$ with $A=B$ such that $ \tau\subseteq \sigma$ and $R^{\struct{B}}=R^{\struct{A}}$ for each relation symbol $R\in \tau$. Conversely, we call $\struct{A}$ a \emph{reduct} of $\struct{B}$ and denote it by $\struct{B}|^{\tau}$.   
We denote by $(\struct{A},R)$ the expansion of a structure $\struct{A}$ by a relation $R$ over its domain. 
The \emph{substructure} of a $\tau$-structure $\struct{A}$ on a subset $B\subseteq A$ is the $\tau$-structure $\struct{B}$ with domain $B$ and relations $R^{\struct{B}}=R^{\struct{A}}\cap B^k$ for every $R\in \tau$ of arity $k$; we use the notation $\struct{A}[B]\coloneqq \struct{B}$.

The \emph{factor} of a $\tau$-structure $\struct{A}$ through an equivalence relation $E\subseteq A^2$ is the $\tau$-structure $\struct{A}/{E}$ with domain $A/E$ and relations $R^{\struct{A}/{E}}= q_E(R^{\struct{A}})$, where $q_E$ denotes the factor map $x\mapsto [x]_E$. %
Let $\sigma \subset \tau$ be arbitrary. 
We say that $E$ is a \emph{relational congruence} on $\struct{A}$ if the definitions of the corresponding relations of $\struct{A}/E$ do not depend on the choices of the representatives of the equivalence classes of $E$:  for every $R\in \tau$ and all tuples $\bar{t}$ we have $q_E(\bar{t})\in R^{\struct{A}/{E}}$ if and only if $\bar{t}\in R^{\struct{A}}$.

For a positive integer $d$ and a $\tau$-structure $\struct{A}$, the \emph{$d$-th power} of $\struct{A}$ is the $\tau$-structure $\struct{A}^d$ with the domain $A^d$ and relations as follows. For each $R\in \tau$ of arity $k$:
\begin{align*}
    R^{\struct{A}^d}=\{\big((a_{1,1},\dots, a_{1,d}),\dots, & (a_{k,1},\dots, a_{k,d})\big)\in (A^d)^k \\ & \mid  (a_{1,1},\dots, a_{k,1}),\dots, (a_{1,d},\dots, a_{k,d}) \in R^{\struct{A}}\}
\end{align*}

  A \emph{homomorphism} $h\colon \struct{A} \rightarrow \struct{B}$ for $\tau$-structures $\struct{A}$ and $\struct{B}$ is a mapping $h\colon  A\rightarrow B$ that \emph{preserves} each $\tau$-relation, i.e.,~if $ \bar{t} \in R^{\struct{A}}$ for some  relation symbol $R\in \tau$, then $h(\bar{t})\in R^{\struct{B}}$.
We write $\struct{A} \rightarrow \struct{B}$ if $\struct{A}$ maps homomorphically to $\struct{B}$; formally, 
\[\CSP(\struct{A}) \coloneqq \{\struct{X} \text{ finite} \mid \struct{X} \rightarrow \struct{A}\}.\]
 If $\struct{A} \rightarrow \struct{B}$ and $\struct{B} \rightarrow \struct{A}$, then we say that $\struct{A}$ and $\struct{B}$ are \emph{homomorphically equivalent}. 
 An \emph{endomorphism} of $\struct{A}$ is a homomorphism from $\struct{A}$ to itself; we denote by $\End(\struct{A})$ the set (monoid) of all endomorphisms of $\struct{A}$.  
 We say that $\struct{A}$ is \emph{contractible}~\cite{bodirsky2010categorical} if it has a constant endomorphism; otherwise \emph{non-contractible}. 
Clearly, $\struct{A}$ is contractible if and only if its CSP is the CSP of a one-element structure, and hence, solved by an algorithm that only checks whether certain fixed relations are empty in an instance.

An \emph{embedding} is an injective homomorphism $h\colon \struct{A} \rightarrow \struct{B}$ that additionally satisfies the following condition: for every $k$-ary relation symbol $R\in \tau$ and $\bar{t}\in A^{k}$ we have $h(\bar{t})\in R^{\struct{B}}$ only if $\bar{t}\in R^{\struct{A}}.$ 
The \emph{age} of $\struct{A}$, denoted by $\age(\struct{A})$, is the class of all finite structures which embed into $\struct{A}$.  
 An \emph{isomorphism} is a surjective embedding. Two structures $\struct{A}$ and $\struct{B}$ are \emph{isomorphic} if there exists an isomorphism from $\struct{A} $ to $\struct{B}$.
 A \emph{partial isomorphism} between two structures $\struct{A}$ and $\struct{B}$ is an isomorphism from a substructure of $\struct{A}$ to a 
substructure of $\struct{B}$; if $\struct{A}=\struct{B}$, then we speak of a partial isomorphism on $\struct{A}$.
An \emph{automorphism} is an isomorphism from $\struct{A}$ to $\struct{A}$; we denote by $\Aut(\struct{A})$ the set (group) of all automorphisms of $\struct{A}$.
The \emph{orbit} of a tuple $\bar{t}\in A^{k}$ in $\struct{A}$ is the set $\{g(\bar{t}) \mid g \in \Aut(\struct{A})\}$.  Any tuples belonging to the same orbit satisfy precisely the same first-order formulas over $\struct{A}$.  
The \emph{orbit equivalence} relation provides a natural way to factorise relational structures: 
given a relational structure $\struct{A}$ and a subgroup $G\subseteq \Aut(\struct{A})$,
we denote by $\orbeq{\struct{A}}{G}$ the structure $\struct{A}/E$ for \[E\coloneqq \{(x,y)\in A^2 \mid \exists g\in G\colon g(x)=y\}.\]

We say that a structure $\struct{A}$ has \emph{no algebraicity} (in the group-theoretic sense) if, for every $k\geq 1$ and every $\bar{a}\in A^k$, the automorphisms of $\struct{A}$ which stabilize $\bar{a}$ do not stabilize any $a'\notin \bar{a}$  (by this notation we mean that $a'$ does not appear as an entry of $\bar{a}$).
In $\omega$-categorical structures this is the case if and only if for every $k\geq 1$ and every tuple $\bar{a}\in A^k$  it is not possible to first-order define any finite set of elements outside $\bar{a}$ using $\bar{a}$ as parameters. In other words, any non-trivial first-order property  relative to $\bar{a}$ is always satisfied by infinitely many elements. %

Recall from the introduction that an $\omega$-categorical structure $\struct{A}$ is a model-complete core if the endomorphisms of $\struct{A}$ preserve all orbits of $\Aut(\struct{A})$.
A reformulation of this definition that will be more convenient for us is that for every $e\in \End(\struct{A})$ and every finite $F\subseteq A$, there exists $\alpha \in \Aut(\struct{A})$ such that $e|_F=\alpha|_F$ (see e.g.~\cite{bodirsky2021complexity}).

\subsection{The KPT correspondence}\label{sec:kpt}

For structures $\struct{S}$ and $\struct{M}$, we denote by $\smash{\binom{\struct{M}}{\struct{S}}}$ the set of all embeddings of $\struct{S}$ into $\struct{M}$.
A class $\mathcal{K}$ of structures over a common signature $\tau$ has the \emph{Ramsey property} (RP) if,  for all $\struct{S},\struct{M}\in \mathcal{K}$ and $k\in \mathbb{N}$, there exists $\struct{L}\in \mathcal{K}$ such that, for every map $f\colon \binom{\struct{L}}{\struct{S}} \rightarrow [k]$, there exists $e\in \smash{\binom{\struct{L}}{\struct{M}}}$ such that $f$ is constant on the set 
\[\{e\circ u\;|\; u\in \textstyle\binom{\struct{M}}{\struct{S}}\} \subseteq \textstyle\binom{\struct{L}}{\struct{S}}.\]
It is a folklore fact that considering the special case $k=2$ is enough for verifying the RP~\cite{hubickanesetril2019}, and we call any  $\struct{L}$ satisfying the above in this case a \emph{Ramsey witness} for $(\struct{S},\struct{M})$.  
A homogeneous structure is \emph{Ramsey} if its age has the Ramsey property. 

In the proof of one of our auxiliary results (Proposition~\ref{prop:basic_properties_wreath_products}), we use the Kechris-Pestov-Todor\v{c}evi\'c (KPT) correspondence, which links the Ramsey property for homogeneous structures over countable signatures to a  property of their automorphism group viewed as a topological group. 
A \emph{topological group} is a pair $(G,\mathcal{T})$, where $G$ is a group and $(G;\mathcal{T})$ a topological space, such that both taking inverses and composition in the group $G$ are continuous with respect to $\mathcal{T}$. 
A topological group $(G,\mathcal{T})$ is called \emph{extremely amenable} if every continuous action $\alpha$ of $(G,\mathcal{T})$ on a  compact Hausdorff topological space $(X,\mathcal{T}')$ has a fixed point, i.e.~there exists $x\in X$ such that $\alpha(g)(x)=x$ for every $g\in G$; the action $\alpha$ being continuous means that  the map $(g,x)\mapsto \alpha(g)(x)$ is.
A bijective map $f$ between two topological groups is a \emph{topological isomorphism} if it is a group isomorphism and  both $f$ and $f^{-1}$ are continuous.
Clearly, topological group isomorphisms preserve extreme amenability. 
The formulation of the KPT correspondence below (Theorem~\ref{thm:kpt}) refers to the extreme amenability of $\Aut(\struct{B})$, without mentioning any specific topology: this is because it is understood to be equipped with the \emph{topology of pointwise convergence}.
This is the smallest topology on $\Aut(\struct{B})$ containing all sets of the form $\{g\in \Aut(\struct{B}) \mid g(\bar{t}) =\bar{s}\}$ for some tuples $\bar{t},\bar{s}$ over $B$, the \emph{basic open sets} of the topology.
\begin{theorem}[Kechris, Pestov, and Todorcevic~\cite{kechris2005fraisse}] \label{thm:kpt} Let $\struct{B}$ be a countable homogeneous %
structure over a countable  relational signature. 
Then $\struct{B}$ is Ramsey if and only if  $\Aut(\struct{B})$ is extremely amenable.
\end{theorem}  

\subsection{Universal algebra}
An (\emph{equational}) \emph{condition} is a set of \emph{identities}, i.e.~formal
expressions of the form $s \approx t$, where $s$ and $t$ are terms over
a common set of function symbols. 
An equational condition is \emph{height-1} if it contains neither nested terms nor terms consisting of a single variable.
A \emph{polymorphism} of a relational structure $\struct{A}$ is a  homomorphism from $\struct{A}^k$ into $\struct{A}$ for some $k\in \mathbb{N}$; the set of all polymorphisms of  $\struct{A}$ is denoted by $\Pol(\struct{A})$.
We say that $\Pol(\struct{A})$ (or some set $S$ of operations on %
$A$) \emph{satisfies} an %
equational condition $\Sigma$ if the function symbols can be interpreted as elements of $\Pol(\struct{A})$ (of $S$) so that, for each identity $s \approx t$ in $\Sigma$, the equality $s = t$ holds for any evaluation of variables in $A$. 
An example is the \emph{cyclic identity} of arity $n\geq 2$ given by \[f(x_1,\dots, x_n) \approx f(x_2,\dots,x_n, x_1),\] for a symbol $f$ of arity $n\geq 2$. 
 An operation is called \emph{cyclic} if it satisfies the cyclic identity.

\begin{theorem}[\cite{Barto_2012}]\label{thm:barto12}Let $\struct{A}$ be a finite relational structure. If $\struct{A}$ does not pp-construct $\struct{K}_3$, then $\Pol(\struct{A})$ contains a cyclic operation.
\end{theorem}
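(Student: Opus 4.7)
The plan is to follow the classical Barto–Kozik strategy, which reduces the statement to an application of a \emph{loop lemma} for smooth digraphs over idempotent Taylor algebras.

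First, I would reduce to the idempotent case. Every finite relational structure is homomorphically equivalent to its (unique up to isomorphism) core, and homomorphic equivalence preserves both pp-constructibility of $\struct{K}_3$ and the existence of the sought polymorphism. So one may assume $\struct{A}$ is a core. In a finite core, every singleton $\{a\}$ is pp-definable, so the expansion of $\struct{A}$ by all unary singleton relations does not change whether $\struct{A}$ pp-constructs $\struct{K}_3$, yet forces every polymorphism to be \emph{idempotent}, i.e.\ to satisfy $f(x,\dots,x) \approx x$. From now on $\Pol(\struct{A})$ may be assumed to be an idempotent clone.

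Second, I would extract a weak near-unanimity operation. Since $\struct{A}$ does not pp-construct $\struct{K}_3$, Taylor's theorem (or equivalently Siggers' identity from Theorem~\ref{thm:finite_domain_CSP}) provides a non-trivial Taylor polymorphism of $\struct{A}$. By the theorem of Maróti and McKenzie, any idempotent Taylor clone on a finite set contains, for every sufficiently large prime arity $p$, a \emph{weak near-unanimity} (WNU) operation, i.e.\ an idempotent $w$ satisfying
\[ w(y,x,\dots,x) \approx w(x,y,x,\dots,x) \approx \cdots \approx w(x,\dots,x,y). \]
This WNU identity is strictly weaker than the cyclic identity, and the task is now to strengthen it into a genuinely cyclic operation without losing polymorphism status.

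The final and hardest step is the WNU-to-cyclic upgrade via the loop lemma. Fix a prime $p > |A|$ together with a WNU polymorphism $w$ of arity $p$. Consider the directed graph $D$ on $A^p$ whose edges are the cyclic shifts $(a_1,\dots,a_p) \to (a_2,\dots,a_p,a_1)$, restricted to a suitable pp-definable subset of $A^p$ that captures the behaviour of $w$ under iteration. A cyclic polymorphism of arity $p$ corresponds precisely to a \emph{loop} in the quotient digraph obtained by identifying cyclic shifts, viewed as a pp-definable relation in the algebra generated by $\Pol(\struct{A})$. One then invokes the Barto–Kozik \emph{loop lemma}: every smooth pp-definable digraph in a finite idempotent Taylor algebra has a loop. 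The loop translates back to an operation $f \in \Pol(\struct{A})$ satisfying $f(x_1,\dots,x_p) \approx f(x_2,\dots,x_p,x_1)$, as desired.

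The main obstacle is this last step. The loop lemma is a deep result whose proof relies on the theory of \emph{absorbing subuniverses} and on a delicate inductive analysis of the structure of idempotent Taylor algebras. Moreover, applying the lemma in the present setting requires carefully verifying that, after passing to the pp-definable subdomain that supports the WNU iterates, the resulting digraph is indeed smooth (every vertex has both positive in- and out-degree). Establishing this smoothness and then running the loop-lemma machinery constitutes the technical heart of the Barto–Kozik argument, and I would expect it to be the most demanding part of the proof.
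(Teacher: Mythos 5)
The paper does not prove this statement; it is quoted as a known external theorem (the Barto--Kozik cyclic terms theorem, cited as \cite{Barto_2012}) and invoked as a black box in the proof of Proposition~\ref{prop:sandwiches}. Your proposal is therefore a reconstruction of that result rather than of anything argued in the paper itself. As a high-level account of the Barto--Kozik proof it is reasonable: the reduction to an idempotent core, the passage through Taylor/weak near-unanimity operations, and the upgrade to a cyclic term of prime arity $p>|A|$ via absorption theory and the loop lemma is indeed the correct architecture. Be careful, though, with your third step: the rotation digraph on $A^p$ is trivially smooth and its loops are exactly the constant tuples, so the entire mathematical content lies in the ``suitable pp-definable restriction'' that you leave unspecified, and this is precisely where the serious work of Barto and Kozik (the Absorption Theorem and its consequences) actually lives. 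For the purposes of the present paper no proof is needed here --- the theorem is simply cited.
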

 
A \emph{pseudo-version} of a height-1 identity $s \approx t$ is of the form $\alpha \circ s \approx \beta \circ t$ for fresh unary function symbols $\alpha$ and $\beta$.
An example is the pseudo-Siggers identity from Conjecture~\ref{conj:bodirsky_pinsker}. 
Allowing for different unary symbols in different identities, this definition naturally extends to height-1 conditions.

We extend the notion of a polymorphism to PCSP templates $(\struct{S}_1,\struct{S}_2)$ in the usual way:
a \emph{polymorphism} of $(\struct{S}_1,\struct{S}_2)$  is a homomorphism from a finite power of $\struct{S}_1$ to $\struct{S}_2$;
we denote by $\Pol(\struct{S}_1,\struct{S}_2)$ the set (minion) of all such polymorphisms. 
Also the satisfaction of height-1 conditions can be generalized to the PCSP setting in a straightforward manner.
The same is not true for nested identities, because the composition of polymorphisms between different structures is not defined.

We say that a height-1 condition $\Sigma$ is \emph{satisfied in  $\Pol(\struct{S}_1,\struct{S}_2)$ modulo a set $H$ of unary operations on $S_2$} if the function symbols in $\Sigma$ can be interpreted as elements of $\Pol(\struct{S}_1,\struct{S}_2)$ such that for each identity $s \approx t$ in $\Sigma$ there exist $\alpha,\beta\in H$ such that $\alpha\circ s =\beta \circ t$ holds for any evaluation of variables in $S_1$.

\subsection{First-order logic} \label{section:fo}
We say that a first-order formula is \emph{$k$-ary} if it has $k$ free variables.
For a first-order formula $\phi$, we use the notation $\phi(\bar{x})$ to indicate that the free variables of $\phi$ are among $\bar{x}$.
This does not mean that the truth value of $\phi$ depends on each entry in $\bar{x}$.
We assume that equality $=$ as well as the nullary predicate symbol $\bot$  for falsity are always available when building first-order formulas. 
Thus, \emph{atomic $\tau$-formulas} over a relational signature $\tau$ are of the form $\bot$, $(x=y)$, and $R(\bar{x})$ for some $R\in \tau$ and a tuple $\bar{x}$ of first-order variables matching the arity of $R$.
A first-order $\tau$-formula $\phi$ is \emph{primitive positive} (pp) if it is of the form $\exists x_{1},\dots,x_{m}  (\phi_{1}\wedge \dots \wedge \phi_{n})$, where each $
\phi_{i}$ is atomic.
Let $\struct{A}$ and $\struct{B}$ be relational structures and $d$ a natural number.
A \emph{$d$-dimensional First-Order} (FO) \emph{interpretation} of $\struct{B}$ in $\struct{A}$ is a partial surjection $\mathcal{I}\colon A^d \rightarrow B$ with the property that, for every relation $R$ defined by a $k$-ary atomic formula in $\struct{B}$, the $dk$-ary (preimage) relation 
\[
\mathcal{I}^{-1}(R) \coloneqq \bigl\{(a_1^1,\dots, a_d^1, \dots, a_1^k,\dots, a_d^k) \mid \big(\mathcal{I}(a_1^1,\dots, a_d^1),\dots, \mathcal{I}(a_1^k,\dots, a_d^k)\big) \in R \bigr\}
\]
has a first-order definition in $\struct{A}$.
Since the equality is allowed as an atomic formula, there must in particular be first-order formulas defining the domain and the kernel of $\mathcal{I}$.    

If the defining formulas of an interpretation can be chosen primitive positive, then we call it a \emph{pp-interpretation}.  
We say that $\struct{A}$ and $\struct{B}$ are \emph{first-order bi-interpretable} (\emph{pp-bi-interpretable}) if there exist first-order interpretations (pp-interpretations) $\mathcal{I}_{\struct{B}}$ and $\mathcal{I}_{\struct{A}}$ of $\struct{B}$ in $\struct{A}$ and vice versa such that the relations 
\[\{(\bar{x},y) \mid \mathcal{I}_{\struct{A}}\circ \mathcal{I}_{\struct{B}}(\bar{x})= y \} \qquad  \text{and} \qquad \{(\bar{x},y) \mid \mathcal{I}_{\struct{B}} \circ \mathcal{I}_{\struct{A}}(\bar{x}) = y \}\] 
are first-order definable (pp-definable) in $\struct{A}$ and $\struct{B}$, respectively.
Two countable $\omega$-categorical structures are first-order bi-interpretable if and only if their automorphism groups are isomorphic as topological groups~\cite{AHLBRANDT198663} and they are pp-bi-interpretable if and only if their polymorphism clones are isomorphic as topological clones~\cite{Topo-Birk}.

The property of being a model-complete core for an $\omega$-categorical structure is encoded in the polymorphism clone (the closure of the automorphism group under the topology of pointwise convergence is the endomorphism monoid).
Consequently, if $\struct{A}$ and $\struct{B}$ are $\omega$-categorical and pp-bi-interpretable, then $\struct{A}$ is a model-complete core if and only if $\struct{B}$ is. 
Moreover, by Theorem~\ref{thm:kpt}, first-order bi-interpretations between homogeneous structures preserve the Ramsey property. 
 
A \emph{pp-construction} between CSP templates is a pp-interpretation composed
with homomorphic equivalence~\cite{barto2018wonderland}: $\struct{A}$ pp-constructs $\struct{B}$ if $\struct{A}$ pp-interprets a structure $\struct{A}'$ which is homomorphically equivalent to $\struct{B}$; this notion naturally extends to PCSP templates.
For a full definition, we refer the reader to~\cite{barto2021algebraic}; in the present article, we need the following characterization: a structure $\struct{A}$ pp-constructs the template $(\struct{S}_1,\struct{S}_2)$ if
there exists a structure $\struct{A}'$ such that $\struct{A}$ pp-constructs $\struct{A}'$ and $\struct{S}_1 \rightarrow \struct{A}' \rightarrow \struct{S}_2$.

 \subsection{Fixed-Point logic}\label{section:fp}
 \emph{Fixed-Point logic} (FP) is defined by adding formation rules to first-order logic whose semantics is defined with inflationary fixed-points of definable operators:
every first-order formula is an FP formula and, if $\phi(\bar{x})$ is an FP formula over some relational signature $\tau\cup \{R\}$ with $R\notin \tau$, then 
$[\mathsf{ifp}_{R}\,\phi](\bar{x})$ is an FP formula over the signature $\tau$ whose semantics is given as follows.
For a $\tau$-structure $\struct{A}$ and a tuple $\bar{a}$ over $A$ matching the arity of $\bar{x}$, say $k$, we have $\struct{A}\models [\mathsf{ifp}_{R}\, \phi](\bar{a})$ if and only if $\bar{a}$ is contained in the inflationary fixed-point of the operator 
\[
F^{\struct{A}}_{\phi,R}(X) \coloneqq \{\bar{x}\in A^k \mid  (\struct{A},X)\models \phi(\bar{x})\},
\] 
i.e., the limit of the sequence $X_0\coloneqq \emptyset$ and $X_{i+1} \coloneqq X_i \cup F^{\struct{A}}_{\phi,R}(X_i)$. 
For example, the formula \[[\mathsf{ifp}_T\, E(x,z) \vee (\exists y \ldotp T(x,y) \wedge T(y,z) )](x,z)\] computes the transitive closure of $E$. 

The logic programming language~\emph{Datalog} can be viewed as the existential positive fragment of FP.
It is not hard to see that every Datalog formula is specified by a finite set of rules of the form $R(\bar{x})\Leftarrow R_1(\bar{x}_1)\wedge \cdots \wedge R_m(\bar{x}_m)$ where $R$ is a fixed-point variable.
In the case of the above Datalog formula computing the transitive closure of $E$, for example,  the rules are 
$T(x,z) \Leftarrow E(x,z)$ and $T(x,z) \Leftarrow T(x,y) \wedge T(y,z)$. 

\emph{Fixed-Point logic with Counting} FPC is obtained from FP by adding a mechanism for counting the members of definable queries.
Here, we only need to know  the following well-known game-theoretic invariant.
The \emph{bijective $k$-pebble-game} is played by two players, the \emph{Spoiler} and the \emph{Duplicator}, on a pair of structures $\struct{A}$ and $\struct{B}$.
The players have a set of pairs of pebbles $\{(p_1^{\struct{A}},p_1^{\struct{B}}),\dots, (p_k^{\struct{A}},p_k^{\struct{B}})\}$.
In each move:

\smallskip
\begin{enumerate} 
    \item The Spoiler chooses $i\in [k]$. 
     \item The Duplicator selects a bijection $f\colon A \rightarrow B$.
    \item The Spoiler places $\smash{p_i^{\struct{A}}}$ on any element $a\in A$ and $\smash{p_i^{\struct{B}}}$ on $f(a)$. 
\end{enumerate}  
\smallskip

We write $\struct{A}  \equiv_{k} \struct{B}$ if the Duplicator can ensure for arbitrarily many rounds that $p_i^{\struct{A}} \mapsto p_i^{\struct{B}}$ specifies a partial isomorphism between the pebbled elements. 

\begin{theorem}[Immerman and Lander~\cite{otto2017bounded}] \label{thm:pebble_games}
For every FPC sentence $\phi$, there exists $k\in \mathbb{N}$ such that, for all finite structures $\struct{A}$ and $ \struct{B}$, we have that  $\struct{A} \equiv_{k} \struct{B}$ implies $\struct{A} \models \phi \Leftrightarrow \struct{B} \models \phi$.  
\end{theorem}

\subsection{Descriptive complexity of (P)CSPs}

For a relational $\tau$-structure $\struct{A}$ and a logic $\mathcal{L}$ (in the sense of Gurevich~\cite{gurevich1988logic,grohe2008quest}), we say that $\CSP(\struct{A})$ is \emph{solvable} or \emph{expressible in $\mathcal{L}$} if there exists an  $\mathcal{L}$-sentence defining the class of all finite $\tau$-structures which do \emph{not} homomorphically map to $\struct{A}$.
Here, the extra negation is included to account for certain logics which are not closed under negation, e.g.~Datalog.
More generally, we say that $\PCSP(\struct{S}_1,\struct{S}_2)$ is solvable in  $\mathcal{L}$ if there exists an $\mathcal{L}$-sentence $\Phi$ that is true in all negative instances, and false in all positive instances.
A typical strategy to show that $\PCSP(\struct{S}_1,\struct{S}_2)$
is not solvable in FPC is to use Theorem~\ref{thm:pebble_games}:  it suffices to find, for all $k\in \mathbb{N}$,
a YES-instance $\struct{A}_k$ and a NO-instance $\struct{B}_k$ such that $\struct{A}_k \equiv_{k} \struct{B}_k$.
We call such families \emph{fooling instances}.
If, in addition, there is a finite substructure $\struct{S}$ of $\struct{S}_1$ such that for all $k$, $\struct{A}_k$ maps homomorphically to $\struct{S}$, we call the family \emph{fixed-image fooling}.

\begin{example}
    $\CSP(\mathbb{Q};<)$ is solvable in Datalog, as witnessed by the sentence \[\exists u [\mathsf{ifp}_T\, (x<z) \vee (\exists y \ldotp T(x,y) \wedge T(y,z) )](u,u).\]
\end{example}

First-order interpretations defined in Section~\ref{section:fo} naturally generalize to the notion of an $\mathcal{L}$-interpretation for arbitrary logics $\mathcal{L}$ by allowing $\mathcal{L}$-formulas for defining the preimages of the relations. 
Let $\tau$ and $\tau'$ be two finite relational signatures and $\mathcal{R}$ a tuple consisting of a $dk$-ary $\tau$-formula $\phi_{\alpha}$ in some logic $\mathcal{L}$ for every $k$-ary atomic $\tau'$-formula $\alpha$.  
Let $\textnormal{dom}(\mathcal{R})\coloneqq \{\bar{x}\in X^d:\phi_{x=x}(\bar{x})\}$ and $\sim_{\mathcal{R}}\coloneqq\{(\bar{x},\bar{y})\in(X^d)^2:\phi_{x=y}(\bar{x},\bar{y})\}$. Then we can assign to any $\tau$-structure $\struct{X}$ a unique $\tau'$-structure $\mathcal{R}(\struct{X})$ on $\textnormal{dom}(\mathcal{R})/\sim_{\mathcal{R}}$ with an $\mathcal{L}$-interpretation in $\struct{X}$ as given by the formulas in $\mathcal R$.
For a $\tau$-structure $\struct{A}$ and $\tau'$-structure $\struct{A}'$, we say that $\mathcal{R}$ is an \emph{$\mathcal{L}$-reduction} from $\CSP(\struct{A})$ to $\CSP(\struct{A}')$ if 
\[
\struct{X} \rightarrow \struct{A} \quad \text{if and only if} \quad \mathcal{R}(\struct{X}) \rightarrow \struct{A}'.
\] 
Logical reductions compose whenever the associated interpretations do.
For any logic $\mathcal{L}$ closed under conjunction of formulas and existential quantification over first-order variables,  $\mathcal{L}$-reductions preserve the expressibility of CSPs in $\mathcal{L}$ and other logics extending $\mathcal{L}$.
Some authors require the kernel of the associated interpretation to be trivial (cf.~\cite[Def.~1]{atserias2009affine} and~\cite[Sec.~3.1]{dalmau2024local}). We choose not to impose this restriction, since this is only a convention stemming from the fact that factorization is not necessary in most (if not all) 
use cases of logical reductions.
For example, any pp-interpretation of a structure $\struct{A}'$ in $\struct{A}$ yields a factor-free Datalog-reduction from $\CSP(\struct{A}')$ to $\CSP(\struct{A})$~\cite{atserias2009affine}.\footnote{Datalog reductions in~\cite{atserias2009affine} are additionally allowed to have finitely many exceptions and use finitely many parameters in the defining formulas.}
Let $(\struct{A}_1,\struct{A}_2)$ and $(\struct{A}_1',\struct{A}'_2)$ be PCSP templates over the signatures $\tau$ and $\tau'$, respectively.
We say that $\mathcal{R}$ is an \emph{$\mathcal{L}$-reduction} from $\PCSP(\struct{A}_1,\struct{A}_2)$ to $\PCSP(\struct{A}_1',\struct{A}'_2)$ if 
\[
\struct{X} \rightarrow \struct{A}_1 \text{ implies } \mathcal{R}(\struct{X}) \rightarrow \struct{A}'_1 \quad\text{and}\quad \struct{X} \nrightarrow \struct{A}_2 \text{ implies } \mathcal{R}(\struct{X}) \nrightarrow \struct{A}'_2.
\] 
More generally, we say that  $\PCSP(\struct{A}_1,\struct{A}_2)$ \emph{$\mathcal{L}$-reduces} to $\PCSP(\struct{A}_1',\struct{A}'_2)$ if there exists an $\mathcal{L}$-reduction $\mathcal{R}$ from $\PCSP(\struct{A}_1,\struct{A}_2)$ to $\PCSP(\struct{A}_1',\struct{A}'_2)$.

\section{Simplifying the Bodirsky-Pinsker conjecture} \label{section:algebraicity_irrelevant}

In the present section, we give proofs of Theorem~\ref{thm:removing_algebraicity}, Theorem~\ref{thm:polinjective}, and Corollary~\ref{cor:binaryInj}.

\subsection{Wreath products} \label{section:wreath}

Given groups $G$ and $H$ acting on sets $A$ and $B$, respectively, their \emph{wreath product} $G\wr H$ is given by $G^B\times H$ acting on $A\times B$ via \[\left((g_b)_{b\in B},h)\right)(a,b)\coloneqq (g_{h(b)}(a),h(b)).\]
The wreath product $G\wr H$ again forms a group, with the neutral element 
$((e_G)_{b\in B},e_H)$ and the group operation 
\[((g_b)_{b\in B},h))\cdot ((g'_b)_{b\in B},h'))\coloneqq ((g_b\cdot g'_{h^{-1}(b)})_{b\in B},h\cdot h')).\]
Note that the permutation group on  $A\times B$ induced by the component-wise action of the Cartesian product $G\times H$ is contained by that induced by the action of $G\wr H$ thereon: the permutations induced stemming from the former are represented in $G\wr H$ by all elements of the form $((g)_{b\in B},h)$.  

Below we give an important construction on structures (here also called wreath product) which corresponds to group-theoretic wreath products in terms of their automorphism groups. 
Let $\struct{A}$ and $\struct{B}$ be structures over disjoint relational signatures $\tau$ and $\sigma$, and let $E$ be a fresh binary symbol. 
The \emph{wreath product} $\struct{A}\wr\struct{B}$ is the structure over the signature $\tau\cup \{E\}\cup \sigma$ with domain $A\times B$ whose relations are defined as follows.
First, we set \[E^{\struct{A}\wr\struct{B}}\coloneqq \{((x_1,y_1),(x_2,y_2))\in (A\times B)^2 \mid y_1=y_2\}.\]
Then, for $R\in \tau$ and $S\in \sigma$ of arities $k$ and $m$: 
\begin{align*}
    R^{\struct{A}\wr\struct{B}}\coloneqq \ & \{((x_1,y_1),\dots, (x_k,y_k))\in (A\times B)^k \mid (x_1,\dots, x_k)\in R^{\struct{A}}\text{ and } y_1=\cdots=y_n\},  \\ 
S^{\struct{A}\wr\struct{B}} \coloneqq \ & \{((x_1,y_1),\dots, (x_m,y_m))\in (A\times B)^m \mid (y_1,\dots, y_m)\in S^{\struct{B}}\}. 
\end{align*} 

\begin{restatable}[Basic properties of wreath products]{proposition}{wreathproducts} \label{prop:basic_properties_wreath_products}Let $\struct{A}$ and $\struct{B}$ be structures over disjoint finite relational signatures $\tau$ and $\sigma$. Then:
    \begin{enumerate}
        \item \label{item:aut_wreath}  $\Aut(\struct{A} \wr \struct{B})=\Aut(\struct{A}) \wr \Aut(\struct{B})$.
        \item \label{item:algebraicity}  If $\struct{A}$ has no algebraicity, then $\struct{A}\wr\struct{B}$ has no algebraicity.
        \item \label{item:aut_wreath_omega_cat}  If $\struct{A}$ and $\struct{B}$ are $\omega$-categorical, then $\struct{A}\wr\struct{B}$ is $\omega$-categorical. 
        \item \label{item:wreath_homogeneous} If $\struct{A}$ and $\struct{B}$ are homogeneous, then $\struct{A}\wr\struct{B}$ is homogeneous.
        \item \label{item:finite_boundedness} If $\struct{A}$ and $\struct{B}$ are finitely bounded, then $\struct{A}\wr\struct{B}$ is finitely bounded.
        \item \label{item:Ramsey} If $\struct{A}$ and $\struct{B}$ are homogeneous Ramsey, then $\struct{A}\wr\struct{B}$ is homogeneous Ramsey.
    \end{enumerate} 

 \end{restatable}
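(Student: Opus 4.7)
The plan is to handle the six items essentially in order, with item~\ref{item:aut_wreath} serving as the computational backbone for items~\ref{item:algebraicity}--\ref{item:wreath_homogeneous}. For item~\ref{item:aut_wreath}, I would verify the inclusion $\Aut(\struct{A})\wr\Aut(\struct{B})\subseteq\Aut(\struct{A}\wr\struct{B})$ by unpacking the group action on $A\times B$ and checking each relation: $E$ is preserved because the second coordinate is entirely governed by the $\Aut(\struct{B})$-component $h$; each $\tau$-relation involves tuples within a single $E$-class and is handled by the corresponding fiber automorphism $g_{h(b)}\in\Aut(\struct{A})$; each $\sigma$-relation depends only on second coordinates and is preserved by $h$. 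For the converse, any $\phi\in\Aut(\struct{A}\wr\struct{B})$ must permute the $E$-classes $A\times\{b\}$, inducing a permutation $h$ of $B$; preservation of $\sigma$-relations forces $h\in\Aut(\struct{B})$, while the restriction of $\phi$ to each fiber is a bijection to the fiber over $h(b)$ whose first-coordinate component encodes an element of $\Aut(\struct{A})$, thanks to preservation of the $\tau$-relations internal to fibers. A reindexing step yields exactly the wreath product presentation.

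Items~\ref{item:algebraicity}--\ref{item:finite_boundedness} then proceed as follows. For no algebraicity, given $\bar{t}\in(A\times B)^k$ and $(a,b)\notin\bar{t}$, I would split on whether $b$ appears as a second coordinate of some entry of $\bar{t}$: in both cases, the assumption that $\struct{A}$ has no algebraicity yields a $g_b\in\Aut(\struct{A})$ fixing the finitely many $A$-coordinates over $b$ in $\bar{t}$ while moving $a$; setting $h=\mathrm{id}_B$ and the remaining $g_{b'}=\mathrm{id}_A$ produces a wreath product automorphism fixing $\bar{t}$ but moving $(a,b)$. For $\omega$-categoricity, the $\Aut(\struct{A})\wr\Aut(\struct{B})$-orbit of an $n$-tuple is determined by the $\Aut(\struct{B})$-orbit of its second-coordinate projection together with, fiber by fiber, the $\Aut(\struct{A})$-orbit of the corresponding first-coordinate subtuple, and Ryll-Nardzewski applied to both factors gives finiteness. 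For homogeneity, an isomorphism between finite substructures induces, via the second projection, an isomorphism of $\sigma$-substructures of $\struct{B}$ extending by homogeneity of $\struct{B}$ to some $h\in\Aut(\struct{B})$, together with fiberwise isomorphisms of finite $\tau$-substructures of $\struct{A}$ extending by homogeneity of $\struct{A}$; assembled through the wreath product formula they give the desired total extension. For finite boundedness, I would exhibit explicit bounds in the signature $\tau\cup\{E\}\cup\sigma$: violations of $E$ being an equivalence relation; tuples in some $R\in\tau$ that escape a single $E$-class; tuples in some $S\in\sigma$ distinguishing $E$-equivalent elements; the bounds of $\mathcal{N}_A$ placed inside a single $E$-class; and the bounds of $\mathcal{N}_B$ realised on the $\sigma$-quotient obtained by picking one representative per class.

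The main obstacle is item~\ref{item:Ramsey}, since the Ramsey property is not automatically preserved under wreath products. I plan to follow a partite/product Ramsey construction: given finite substructures $\struct{C}\subseteq\struct{D}$ of the age of $\struct{A}\wr\struct{B}$ and a colouring of copies of $\struct{C}$ in some larger target to be produced, first stabilise the combinatorial type of the $B$-projection by applying the Ramsey property of $\struct{B}$, then iteratively stabilise the fiber-internal $\tau$-data by applying the Ramsey property of $\struct{A}$ simultaneously across fibers, using a product Ramsey step to coordinate independent fibers. The delicate point is matching the natural convex lexicographic ordering on $\struct{A}\wr\struct{B}$ with embeddings that factor cleanly through fibers, so that Ramsey inputs and outputs live in the intended ages; this mirrors the standard preservation pattern for wreath products of Ramsey classes in structural Ramsey theory~\cite{hubickanesetril2019, bodirsky_pinsker_ramsey_canonical}, and once it is set up correctly the two-step colouring argument runs through.
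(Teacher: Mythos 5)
For items~\eqref{item:aut_wreath}--\eqref{item:finite_boundedness} your argument is essentially the paper's: the same two inclusions for the automorphism group, the same one-fiber perturbation for no algebraicity (your version, fixing only the first coordinates lying over $b$, is in fact slightly sharper than necessary but correct), homogeneity via a base automorphism plus fiberwise extensions, and finite boundedness via the same five families of obstructions that the paper packages into a single universal sentence. For $\omega$-categoricity the paper takes a small shortcut you could have used too: the Cartesian product $\Aut(\struct{A})\times\Aut(\struct{B})$ is already oligomorphic on $A\times B$ and is contained in the wreath product, so the orbit count only drops; your direct description of orbits by base orbit plus fiberwise orbits also works.

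Item~\eqref{item:Ramsey} is where you genuinely diverge, and where your sketch has a real gap. The paper does not touch the combinatorics at all: it invokes the KPT correspondence, observes that $\Aut(\struct{A})^B$ is a closed normal subgroup of $\Aut(\struct{A}\wr\struct{B})$ that is extremely amenable (products of extremely amenable groups are extremely amenable), that the quotient is topologically isomorphic to $\Aut(\struct{B})$, and concludes by closure of extreme amenability under group extensions. Your partite/product Ramsey route is known to be viable (this is essentially Soki\'c/Scow), but the step you wave through is exactly the hard one: if you first stabilise the fibers and then the base, the positions selected by the product Ramsey step depend on \emph{which} copy of $\struct{X}_0$ in $\struct{W}_0$ you are stabilising over, so the induced colouring of $\binom{\struct{W}_0}{\struct{X}_0}$ is not well defined and the monochromatic copy of $\struct{Y}_0$ need not assemble into a monochromatic copy of $\struct{Y}$; if you first stabilise the base, the colouring of copies of $\struct{X}_0$ is not yet defined. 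The correct combinatorial proof interleaves the two (e.g.\ by induction on the number of $E$-classes of $\struct{Y}$), and ``once it is set up correctly the argument runs through'' is precisely the content that is missing. A smaller point: there is no ``natural convex lexicographic ordering on $\struct{A}\wr\struct{B}$'' in the structure as defined; what saves the bookkeeping is that the ages of homogeneous Ramsey structures consist of rigid structures, so embeddings coincide with copies. If you want to avoid the combinatorics altogether, the extension argument via extreme amenability is the cleaner route.
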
  
 
Proofs of items~\ref{item:wreath_homogeneous},~\ref{item:finite_boundedness}, and~\ref{item:Ramsey} can be found in~\cite[Lem.~2.2.49]{bodor_thesis} (see also~\cite[Lem.~2.33]{bodor2024classification}). The statement in item~\ref{item:Ramsey} can also be found in~\cite[Thm.~5.13]{scow2021ramsey}.
We include a full proof of Proposition~\ref{prop:basic_properties_wreath_products} for the convenience of the reader.
\begin{proof} For~\eqref{item:aut_wreath}, we first show that $\Aut(\struct{A}) \wr \Aut(\struct{B})$ is a subgroup of $\Aut(\struct{A}\wr\struct{B})$.
    Clearly, the wreath product is a subgroup of $\Sym(A\times B)$ and preserves $E^{\struct{A}\wr\struct{B}}$.
    Moreover, as every group element $\left((g_b)_{b\in B},h\right)$ simply acts like $h$ in the second component, all $\sigma$-relations are preserved.
    Concerning $\tau$-relations, note that all pairs occurring within a tuple $((a_1,b_1),\dots,(a_k,b_k))\in R^{\struct{A}\wr\struct{B}}$ share the same second entry $b=b_1=\cdots =b_k$.
    Therefore, the same $g_{h(b)}\in\Aut(\struct{A}))$ is picked to act on all the first entries, and the image $((g_{h(b)}(a_1),h(b)),\dots,(g_{h(b)}(a_k),h(b)))$ lies within $R^{\struct{A}\wr\struct{B}}$.
    
    To prove the equality of the two groups, it remains to show that every $w\in\Aut(\struct{A}\wr\struct{B})$ can be written in the above form.
    As $w$ acts on the equivalence classes of $E^{\struct{A}\wr\struct{B}}$, it induces an automorphism $h'$ on the $\sigma$-reduct of $(\struct{A}\wr\struct{B})/E^{\struct{A}\wr\struct{B}}$, which is isomorphic to $\struct{B}$.
    Further, for any $b\in B$, $w$ also induces an automorphism $g'$ on the $\tau$-reduct of the substructure induced by $A\times \{b\}$, which is in turn isomorphic to $\struct{A}$. 
    Denoting the corresponding automorphisms of $\struct{A}$ and $\struct{B}$ by $g_{h(b)},b\in B$ and $h$, it is now easy to check that $((g_b)_{b\in B},h)$ and $w$ coincide.

    For~\eqref{item:algebraicity},  we must show that fixing any tuple $\bar{t}$ of parameters does not yield any nontrivial fixed point of the automorphism group when stabilizing $\bar{t}$. Let $(a',b')$ be any element outside $\bar{t}$. Let $A'$ be the set of all elements $a\in A$ such that $(a,b)$ appears in $\bar{t}$ for some $b\in B$. Move $a'$ to a different element via some $g\in\Aut(\struct{A})$   whilst fixing $A'$; this is possible because $\struct{A}$ has no algebraicity. Then $\left((g_b)_{b\in B},id\right)$ with $g_{b}\coloneqq g$ if $b=b'$ and $id$ otherwise is an automorphism of $\Aut(\struct{A}\wr\struct{B})$ fixing every entry in $\bar{t}$ and moving $(a',b')$. 

For~\eqref{item:aut_wreath_omega_cat}, recall that, by item~\eqref{item:aut_wreath}, we have $\Aut(\struct{A} \wr \struct{B})=\Aut(\struct{A}) \wr \Aut(\struct{B})$.
But note that $\Aut(\struct{A}) \wr \Aut(\struct{B})$ in particular contains the Cartesian product $\Aut(\struct{A})\times \Aut(\struct{B})$. 
Since the Cartesian product already witnesses that there are only finitely many orbits of tuples of each arity, the statement follows. 

For~\eqref{item:wreath_homogeneous}, let $\bar{t}$ and $\bar{s}$ be tuples of the same length such that the map sending $\bar{t}$ to $\bar{s}$ is a partial isomorphism. 
By reordering, we split $\bar{t}$ into subtuples $(\bar{t}_{b_1},\dots,\bar{t}_{b_n})$ for distinct elements $b_i\in B$ such that the second  projection of all entries of $\bar{t}_{b_i}$ is  $b_i$. Similarly, we obtain $(\bar{s}_{c_1},\dots,\bar{s}_{c_n})$. Pick $g_{c_i}\in\Aut(\struct{A})$  sending the first projections of $\bar{t}_{b_i}$ to those of $\bar{s}_{c_i}$. Pick $h\in\Aut(\struct{B})$ sending $b_i$ to  $c_i$. The automorphism $((g_b)_{b\in B}),h)$, where $g_b=id$ if hitherto undefined, sends $\bar{t}$ to $\bar{s}$.

For~\eqref{item:finite_boundedness}, let $\Phi_{\struct{A}}$ and $\Phi_{\struct{B}}$ be any universal sentences defining $\age(\struct{A})$ and $\age(\struct{B})$, respectively. 
Denote their quantifier-free parts by $\phi_{\struct{A}}$ and $\phi_{\struct{B}}$.
For two tuples $\bar{x}_1,\bar{x}_2$ of the same arity $k$, we use $E(\bar{x}_1,\bar{x}_2)$ as a shortcut for the formula stating that, for every $i\in [k]$, the $i$-the entry of $\bar{x}_1$ and the $i$-the entry of $\bar{x}_2$ are $E$-equivalent.
Then the following sentence $\Phi_{\struct{A}} \wr \Phi_{\struct{B}}$ defines $\age(\struct{A}\wr\struct{B})$:
\begin{align*}
 \forall x,y,z  \Big( E(x,x) \wedge  \big(E(x,y) \Rightarrow E(y,x)\big) \wedge  \big(E(x,y) \wedge E(y,z) \Rightarrow E(x,z) \big)
 \\   
  {}  \wedge \bigwedge\nolimits_{R\in \sigma}   \forall \bar{x}_1,\bar{x}_2   \Big( E(\bar{x}_1,\bar{x}_2) \Rightarrow \big(R(\bar{x}_1)  \Leftrightarrow  R(\bar{x}_2)  \big) 
 \Big)  \\  
   {}  \wedge  \forall \bar{x}  \Big(   \phi_{\struct{B}}(\bar{x})\vee \bigvee\nolimits_{x_1,x_2\in \bar{x}} \big(E(x_1,x_2) \wedge (x_1\neq x_2)\big)    \Big) \\
   {} \wedge \bigwedge\nolimits_{R\in \tau} \forall \bar{x} \Big( R(\bar{x}) \Rightarrow \bigwedge\nolimits_{x_1,x_2\in \bar{x}} E(x_1,x_2)\Big) \\
{}  \wedge \forall \bar{x}  \Big(   \phi_{\struct{A}}(\bar{x})\vee \bigvee\nolimits_{x_1,x_2\in \bar{x}} \neg E(x_1,x_2) \Big)\Big). 
\end{align*} 
To see this, note that:
\begin{itemize}
    \item the first line ensures that $E$ is an equivalence relation,
    \item the second line ensures that $E$ is a relational congruence on $(\struct{A}\wr\struct{B})|^\sigma$,
    \item the third line ensures that the factor by $E$ satisfies $\Phi_{\struct{B}}$, 
    \item the fourth line ensures that $\tau$-relations do not stretch over different $E$-classes,
    \item the fifth line ensures that each $E$-class satisfies $\Phi_{\struct{A}}$.
\end{itemize}

For~\eqref{item:Ramsey}, recall that, by Theorem~\ref{thm:kpt}, $\Aut(\struct{A})$ and $\Aut(\struct{B})$ are extremely amenable. 
By Proposition~\ref{prop:basic_properties_wreath_products}\eqref{item:aut_wreath}, we have $\Aut(\struct{A} \wr \struct{B})=\Aut(\struct{A}) \wr \Aut(\struct{B})$.
Hence, we can use the folklore fact that $  \Aut(\struct{A}) \wr \Aut(\struct{B})$ is a semidirect product $\Aut(\struct{A})^B \rtimes \Aut(\struct{B})$.
In particular, we immediately get that $ \Aut(\struct{A})^B$ is a normal subgroup of $\Aut(\struct{A} \wr \struct{B})$.
We claim that $\Aut(\struct{A})^B$ is closed with respect to the topology of pointwise convergence.
This follows from the fact that the complement of $\Aut(\struct{A})^B$ can be written as \[\bigcup\nolimits_{x \in B, y\in B\setminus\{x\}}\{\left((g_b)_{b\in B},h)\right)\in \Aut(\struct{A}\wr \struct{B}) \mid h(x)=y \},\] which is a union of basic open sets.
By Lemma~6.7(iii) in~\cite{kechris2005fraisse}, products of extremely amenable groups are extremely amenable, and hence $\Aut(\struct{A})^B$ is extremely amenable.
If we can show that also $\Aut(\struct{A} \wr \struct{B})/\Aut(\struct{A})^B$ is extremely amenable, then it follows from Lemma~6.7(ii) in~\cite{kechris2005fraisse} that $\Aut(\struct{A} \wr \struct{B})$ is extremely amenable as well.

The composition $\pi \circ \iota$ of the natural embedding $\iota \colon \Aut(\struct{B})\rightarrow \Aut(\struct{A} \wr \struct{B})$ with the natural quotient map $\pi \colon \Aut(\struct{A} \wr \struct{B}) \rightarrow \Aut(\struct{A} \wr \struct{B})/\Aut(\struct{A})^B$ is a topological isomorphism between $\Aut(\struct{B})$ and $\Aut(\struct{A} \wr \struct{B})/\Aut(\struct{A})^B$. 
Hence, the claim follows from the fact that $\Aut(\struct{B})$ is extremely amenable.
Recall that, by item~\ref{item:wreath_homogeneous}, we have that $\struct{A} \wr \struct{B}$ is homogeneous.
Since $\Aut(\struct{A} \wr \struct{B})$ is extremely amenable, we conclude using Theorem~\ref{thm:kpt} that $\struct{A} \wr \struct{B}$ is homogeneous Ramsey.  
\end{proof}

\subsection{Removing algebraicity and adding injectivity} \label{section:everything_together}

In the context of Conjecture~\ref{conj:bodirsky_pinsker}, the concepts of CSP-injectivity and having no algebraicity are closely related.
On the one hand, if $\struct{B}$ is a homogeneous structure with no algebraicity and its relations only contain  tuples with pairwise distinct entries, then it is CSP-injective~\cite[Lem.~4.3.6]{bodirsky2021complexity}.
On the other hand, we can prove the following.

\begin{restatable}{proposition}{corealgebraicity} \label{prop:CSP_injective_no_algebraicity}
 CSP-injective $\omega$-categorical model-complete cores have no algebraicity. 
\end{restatable}
\begin{proof} Let $\struct{B}$ be a CSP-injective $\omega$-categorical model-complete core.
We show that $\struct{B}$ has no algebraicity.
Suppose, on the contrary, that there exists $\bar{b}\in B^k$ and  $b'\notin \bar{b}$ such that the unary relation $\{b'\}$ is first-order definable in $\struct{B}$ using $\bar{b}$ as parameters.
By~\cite[Thm.~4.5.1 and Lemma~4.5.5]{bodirsky2021complexity}, the expansion $(\struct{B},\{\bar{b}\})$ is an $\omega$-categorical model-complete core, and there exists a primitive positive formula $\phi(x)$ defining $\{b'\}$ in $(\struct{B},\{\bar{b}\})$.
Also, by~\cite[Thm.~4.5.1]{bodirsky2021complexity}, there exists a primitive positive formula $\psi(\bar{y})$ defining the orbit of $\bar{b}$ in $\struct{B}$.
We may assume that $\psi$ does not contain any equality atoms, otherwise we eliminate them by identifying variables.
Let $\phi'$ be the primitive positive formula obtained by replacing the constants $\bar{b}$ in $\phi$ with the variables $\bar{y}$, and set \[ \phi'' \coloneqq   \psi(\bar{y}) \wedge \phi'(x_1)\wedge \phi'(x_2);\] we may again assume that $\phi'$ and $\phi''$ do not contain any equality atoms. 
 We use $\phi''$ to create an instance $\struct{A}$ of $\CSP(\struct{B})$ that maps homomorphically to $\struct{B}$ but has no injective such homomorphism.
 
 The domain of $\struct{A}$ consists of the variables of $\phi''$, and the  relations are determined by the atomic subformulas of $\phi''$. 
Clearly, $\struct{A}$ has a homomorphism to $\struct{B}$, because we can substitute $\bar{b}$ for $\bar{y}$ in $\phi''$ and $b$ for both $x_1$ and $x_2$.
However, $\struct{A}$ has no injective homomorphism to $\struct{B}$.
Indeed, if this was the case, then there would exist distinct witnesses $\bar{y}$, $x_1$, and $x_2$ for the satisfaction of $\phi''$ in $\struct{B}$.
By definition, there would exist $\alpha\in \Aut(\struct{B})$ with $\alpha(\bar{y})=\bar{b}$.
But then $\alpha(x_1)=\alpha(x_2)=c$, a contradiction to the injectivity of $\alpha$.
We conclude that $\struct{A}$ does not have any injective homomorphism to $\struct{B}$, and hence  $\struct{B}$ is not CSP-injective, a contradiction to our assumption. 
    Hence, $\struct{B}$ has no algebraicity.
\end{proof} 
We shall now see that CSP-injectivity and Pol-injectivity cannot hold simultaneously in $\omega$-categorical structures; yet, we shall observe immediately thereafter that CSP-injectivity can be traded against Pol-injectivity.  

\begin{restatable}{proposition}{clash} \label{prop:pol_and_CSP_injectivity} CSP-injective $\omega$-categorical structures cannot be Pol-injective. On the other hand, the expansion $\struct{A}_{I_4}$ of any CSP-injective structure $\struct{A}$ by the relation \[I_4 \coloneqq \{ (x,y,u,v) \mid x=y \Rightarrow u=v\}\]  is Pol-injective, and  $\CSP(\struct{A})$ and $\CSP(\struct{A}_{I_4})$  are Datalog-interreducible. 
 \end{restatable}
 \begin{proof}
For the first part, we show that an $\omega$-categorical $\Pol$-injective structure cannot be CSP-injective. This leads to the same conclusion as the original statement, namely, that the formal intersection between the two properties is empty.

    Let $\struct{B}$ be an $\omega$-categorical Pol-injective structure. 
    Note that every essentially injective operation preserves the relation $I_4$.
    Hence, every polymorphism of $\struct{B}$ preserves $I_4$, and so $I_4$ is pp-definable in $\struct{B}$.
    Let $\phi(x,y,u,v)$ be a primitive positive formula that defines $I_4$ in $\struct{B}$.
    Since $I_4$ contains all $4$-tuples with pairwise distinct entries, we may assume that $\phi$ contains no equality atoms, otherwise we eliminate them by identifying variables.
    But now we can use $\phi$ to create an instance $\struct{A}$ of $\CSP(\struct{B})$ that maps homomorphically to $\struct{B}$ but has no injective such homomorphism.
    More specifically, the domain of $\struct{A}$ consists of the variables of $\phi$, except that we identify $x$ and $y$ (but not $u$ and $v$), and the relations are determined by the atomic subformulas of $\phi$. We conclude that $\struct{B}$ is not CSP-injective.   

Regarding the second part, the Pol-injectivity of $\struct{A}_{I_4}$ follows from the known fact that every polymorphism of $I_4$ is essentially injective~\cite{BodChenPinsker} (also~\cite[Lem.~7.5.1]{bodirsky2021complexity}). We proceed with the proof of Datalog-interreducibility.
We first give a Datalog-reduction $\mathcal{I}^{\ast}$ from $\CSP(\struct{A}_{I_4})$ to $\CSP(\struct{A})$.
Let $\tau$ be the signature of $\struct{A}$.
Given an instance $\struct{X}$ of $\CSP(\struct{A}_{I_4})$, the relation $\approx$ is defined in $\struct{X}$ by the following Datalog program:

\smallskip
\begin{itemize}
    \item $(x\approx y) \Leftarrow     (x=y)$,
    \item $(x\approx y)  \Leftarrow     (y\approx x)$,
    \item $(x\approx z)  \Leftarrow     (x\approx y)  \wedge (y\approx z)$,
    \item $(x\approx y)  \Leftarrow     I_4(u,v,x,y) \wedge  (u\approx v)$.
\end{itemize} 
\smallskip

By definition, this relation is an equivalence relation on $X$; denote by $q_{\approx}$ the induced quotient map $X\rightarrow X/{\approx}, x\mapsto [x]_{\approx} $.
We define $\mathcal{I}^{\ast}(\struct{X})$ as the $\tau$-structure with domain $X$ and whose relations are the preimages of all $\tau$-relations in $\struct{X}/{\approx}$ under $q_{\approx}$.
It remains to verify that  $\mathcal{I}^{\ast}$ is a reduction. 

First, suppose that $\mathcal{I}^{\ast}(\struct{X}) \rightarrow \struct{A}$.
Note that $(\struct{X}/{\approx})|^{\tau}$ is a substructure of $\mathcal{I}^{\ast}(\struct{X})$, and hence $(\struct{X}/{\approx})|^{\tau}\rightarrow \struct{A}$.
Since $\struct{A}$ is CSP-injective, there exists an injective homomorphism $h \colon (\struct{X}/{\approx})|^{\tau} \rightarrow \struct{A}$.
By the definition of $\approx$,  
    if we have $[x_1]_{\approx}=[x_2]_{\approx}$ and $([x_1]_{\approx},[x_2]_{\approx},[x_3]_{\approx},[x_4]_{\approx})\in I_4^{\struct{X}/{\approx}}$, then we also have $[x_3]_{\approx}=[x_4]_{\approx}$. 
Since $h$ is injective, it follows that $h$ is also a homomorphism from $\struct{X}/{\approx}$ to $\struct{A}_{I_4}$.
Then the composition $h\circ q_{\approx} $ is a homomorphism from $\struct{X}$ to $\struct{A}_{I_4}$.

Now suppose that there exists a homomorphism $g\colon \struct{X}\rightarrow\struct{A}_{I_4}$. 
Note that, by definition, for every $k$-ary $R\in\tau$, we have $(s_{i_1},\dots, s_{i_k}) \in R^{\mathcal{I}^{\ast}(\struct{X})}$ if and only if there are $j_1\in [s_{i_1}]_{\approx}, \dots, j_k\in [s_{i_k}]_{\approx}$ such that  $(j_1,\dots,j_k) \in R^{\struct{X}}$.
By the definition of $\approx$, every function preserving $I_4$ must identify $\approx$-equivalent elements, and therefore $(g(s_{i_1}),\dots,g(s_{i_k}))=(g(j_1),\dots,g(j_k))$, which implies $(g(s_{i_1}),\dots,g(s_{i_k})) \in  R^{\struct{A}_{I_4}}$. 
Therefore, $g$ is a homomorphism from $\mathcal{I}^{\ast}(\struct{X})$ to $\struct{A}$.

The reduction in the opposite direction is trivial. Given an instance $\struct{X}$ of $\CSP(\struct{A})$, we define $\mathcal{I}(\struct{X})$ as the $(\tau\cup \{I_4\})$-expansion of $\struct{X}$ by an empty relation.
\end{proof}

 Finally, we give the proofs of Theorem~\ref{thm:removing_algebraicity} and Theorem~\ref{thm:polinjective}, restated below.
 \maintheorem*
 \maintheoremtwo*
 As the two theorems only differ in the presence of the relation $I_4$, 
large parts of the proofs of Theorem~\ref{thm:removing_algebraicity} and Theorem~\ref{thm:polinjective} are essentially the same.
For this reason, we conduct both proofs simultaneously, and differentiate whenever necessary. 

\begin{proof}[Proof of Theorem~\ref{thm:removing_algebraicity} and Theorem~\ref{thm:polinjective}]
Let $\tau$ and $\sigma$ be the signatures of $\struct{A}$ and $\struct{B}$, respectively.
We first define the auxiliary structure $\struct{B}'\coloneqq  (\mathbb{Q};<) \wr \struct{B}$. 
By Example~\ref{ex:properties_of_q1}, the structure $(\mathbb{Q};<)$ is finitely bounded homogeneous. It is also CSP-injective:  a digraph maps homomorphically to $(\mathbb Q;<)$ if and only if it is acyclic; in this case there is always an injective homomorphism.
It is a model-complete core, and hence has no algebraicity (Proposition~\ref{prop:CSP_injective_no_algebraicity}).
It follows immediately from Proposition~\ref{prop:basic_properties_wreath_products}\eqref{item:algebraicity}  that $\struct{B}'$ has no algebraicity.
The expansion $(\struct{B}',{\neq})$ of $\struct{B}'$ by the binary inequality has no algebraicity because taking expansions by first-order definable relations does not change the automorphism group, proving item~\ref{item:Bremovingalgebraicity}; the same is true for the structure $(\struct{B}',{\neq}, I_4)$.

\smallskip{\textit{Proof of item~\ref{item:Bomegacat}.}} This follows directly from Proposition~\ref{prop:basic_properties_wreath_products}\eqref{item:aut_wreath_omega_cat} applied to $\struct{B}'$ and the fact that we take an expansion by relations which are preserved by all bijections, and in particular all automorphisms of $\struct{B}'$. 

\smallskip{\textit{Proof of item~\ref{item:BhomogRamsey}.}} Since $\ordblowupi{\struct{B}}$ is an expansion of $\struct{B}'$ by two relations that are preserved by all embeddings, we can ignore these relations and only prove the statement for $\struct{B}'$.
Recall that $\struct{B}'=(\mathbb{Q};<)\wr\struct{B}$.
The first part of~item~\ref{item:BhomogRamsey} follows directly from Proposition~\ref{prop:basic_properties_wreath_products}\eqref{item:wreath_homogeneous} because $(\mathbb{Q};<)$ is homogeneous.
We continue with the second part. 
There the forward direction follows directly from Proposition~\ref{prop:basic_properties_wreath_products}\eqref{item:Ramsey} because $(\mathbb{Q};<)$ is homogeneous Ramsey \cite{kechris2005fraisse}.
We continue with the backward direction.

Suppose that $\age\ordblowupi{\struct{B}}$ has the RP.
Then clearly also $\age(\struct{B}')$ has the RP.
Let $\struct{S},\struct{M}\in \age(\struct{B})$ be arbitrary.
We transform them into $\struct{S}',\struct{M}'\in \age(\struct{B}')$ by letting $E$ interpret as the diagonal relation and $<$ as the empty relation.
Since $\age(\struct{B}')$ has the RP, there exists a Ramsey witness $\struct{L}'\in \age(\struct{B}')$ for $(\struct{S}',\struct{M}')$.
We set $\struct{L}\coloneqq (\struct{L}'/E^{\struct{L}'})|^{\sigma}$.
Let $f\colon \binom{\struct{L}}{\struct{S}} \rightarrow [2]$ be arbitrary. 
We define $f'\colon \binom{\struct{L}'}{\struct{S}'} \rightarrow [2]$ as follows.
For an embedding $e'\in \binom{\struct{L}'}{\struct{S}'} $, we set $f'(e')\coloneqq f(q_E\circ e')$, where $q_E$ denotes the factor map $x\mapsto [x]_E$.
We have that $q_E\circ e' \in \binom{\struct{L}}{\struct{S}}$ because $E$ interprets as the diagonal relation in $\struct{S}'$ and as a relational congruence on $\struct{L}'|^\sigma$.
Since $\struct{L}'$  is a Ramsey witness for $(\struct{S}',\struct{M}')$, there exists $e'\in \binom{\struct{L}'}{\struct{M}'} $ such that $f'$ is constant on  $\smash{\bigl\{e'\circ u'\;|\; u'\in \binom{\struct{M}'}{\struct{S}'}\bigr\} \subseteq \binom{\struct{L}'}{\struct{S}'}}$.
Since $E$ interprets as the diagonal relation in
$\struct{M}'$ and as a relational congruence on $\struct{L}'|^\sigma$, the map $e\coloneqq q_E\circ e'$ is contained in  $\binom{\struct{L}}{\struct{M}}$. 
By construction, $f$ is constant on  $\smash{\bigl\{e\circ u\;|\; u\in \binom{\struct{M}}{\struct{S}}\bigr\} \subseteq \binom{\struct{L}}{\struct{S}}}$.

\smallskip{\textit{Proof of item~\ref{item:Bfinbounded}.}}  
The finite boundedness of $ \ordblowupi{\struct{B}}$ can be deduced from Proposition~\ref{prop:basic_properties_wreath_products}\eqref{item:finite_boundedness}, and the fact that we take an expansion by relations definable by a Boolean combination of equality atoms.
More specifically, as explained in the introduction, up to isomorphism, the finite substructures of a finitely bounded structure are just the finite models of a universal sentence. 
By adding new clauses, that define the relations $\neq$ and $I_4$ in terms of equalities, to the universal sentence for $\age(\struct{B}')$, 
we obtain a universal sentence for $\age\ordblowupi{\struct{B}}$.

\smallskip{\textit{Proof of items~\ref{item:ACSPinj}~and~\ref{item:ADatalog}.}}  
Note that removing all symbols in $\sigma\setminus \tau$ as well as the symbol $<$ from $(\struct{B}',{\neq})$ yields the structure $\blowup{\struct{A}}$.  
We claim that $\blowup{\struct{A}}$ is CSP-injective. 
Let $\struct{X}$ be an arbitrary finite structure for which there exists a homomorphism $h\colon \struct{X} \rightarrow \blowup{\struct{A}}$.
We define $\struct{X}_{\pi}$ as the $(\tau \cup \{E\})$-structure with domain $X$ such that, for every $R\in \tau\cup \{E\}$, we have $\bar{x}\in R^{\struct{X}_{\pi}}$ if and only if $h(\bar{x})\in R^{\blowup{\struct{A}}}$.
It is easy to see that $\struct{X}_{\pi}$ embeds into the $(\tau \cup \{E\})$-reduct of $\struct{B}'$ (if multiple elements in $X$ are mapped to $(q,a)$ for some $q\in\mathbb{Q}$ and $a\in A$, we can always map them to different elements in $\mathbb{Q}\times \{a\}$ since this does not affect the relations in $\tau\cup \{E\}$).
Hence, $\struct{X}_{\pi}$ expanded by the binary inequality predicate embeds into $\blowup{\struct{A}}$. This yields an injective homomorphism from $\struct{X}$.
We conclude that $\blowup{\struct{A}}$ is CSP-injective.

Next, we informally describe two Datalog-reductions $\mathcal{I}^{\ast}$ and $\mathcal{I}$: from $\CSP\blowup{\struct{A}}$ to $\CSP(\struct{A})$ and back, starting with the former. 
Let $\struct{X}$ be an instance of $\CSP\blowup{\struct{A}}$.
Denote by $\sim$ the equivalence closure of $E^{\struct{X}}$, and let $q_{\sim}\colon \struct{X}\rightarrow (\struct{X}/{\sim}), x \mapsto [x]_{\sim}$ be the induced quotient map.
We define $\mathcal{I}^{\ast}(\struct{X})$ as the $\tau$-structure with domain $X$ and whose relations are the preimages of all $\tau$-relations in $\struct{X}/{\sim}$ under $q_{\sim}$, except that we additionally add, for every $x\in X$ such that $(x,x)\in {\neq}^{\struct{X}}$, the tuple $(x,\dots, x)$ to every $\tau$-relation.
It is easy to see that $\mathcal{I}^{\ast}(\struct{X})$ is definable in Datalog; we verify that $\mathcal{I}^{\ast}$ is a reduction from $\CSP\blowup{\struct{A}}$ to $\CSP(\struct{A})$. 
Clearly, if $\struct{X}$ maps homomorphically to $\blowup{\struct{A}}$, then $\mathcal{I}^{\ast}(\struct{X})$ maps homomorphically to $\struct{A}$.
Suppose that there exists a homomorphism $h\colon \mathcal{I}^{\ast}(\struct{X})\rightarrow\struct{A}$.
Since by non-triviality $\struct{A}$ does not have a constant endomorphism, there is no $x\in X$ such that $(x,x)\in {\neq}^{\struct{X}}$: otherwise, the map on $\struct{A}$ sending every element to $h(q_\sim(x))$ would provide a constant endomorphism. 
Denote by $\struct{X}'$ the structure obtained from $\struct{X}$ by removing all tuples from ${\neq}^{\struct{X}}$.
Now, choosing representatives $x_1,\dots,x_k$ for all classes of $\struct{X}/{\sim}$, the function sending the entirety of $[x_i]_\sim$ to $(0,h(x_i))$ for all $i\leq k$ is a homomorphism from $\struct{X}'$ to $\blowup{\struct{A}}$.
By the CSP-injectivity of $\blowup{\struct{A}}$, it can be replaced by an injective homomorphism, which is then also a homomorphism from $\struct{X}$ to $\blowup{\struct{A}}$.
We continue with the reduction from $\CSP(\struct{A})$ to $\CSP\blowup{\struct{A}}$, which is trivial.
Let $\struct{X}$ be an instance of $\CSP(\struct{A})$.
We define $\mathcal{I}(\struct{X})$ as the $(\tau\cup \{E,\neq\})$-expansion of $\struct{X}$ by empty relations.
By CSP-injectivity, there exists a homomorphism $h\colon \mathcal{I}(\struct{X}) \rightarrow \blowup{\struct{A}}$ if and only if there exists an injective such homomorphism $i$.
This is the case if and only if the relation $E^{\blowup{\struct{A}}}$ restricted to $i(X)$ describes the kernel of a homomorphism from  $\struct{X}$ to $\struct{A}$.
Therefore, $\struct{X}\rightarrow\struct{A}$ if and only if $\mathcal{I}(\struct{X})\rightarrow\blowup{\struct{A}}$. 
 
We now consider $\blowupi{\struct{A}}$. 
By Proposition~\ref{prop:pol_and_CSP_injectivity}, we have that $\blowupi{\struct{A}}$ is Pol-injective and $\CSP\blowupi{\struct{A}}$ and $\CSP\blowup{\struct{A}}$ are Datalog-interreducible.
That $\CSP\blowupi{\struct{A}}$ and $\CSP(\struct{A}$) are Datalog-interreducible then follows from the fact that Datalog-reductions can be composed.
 
Finally, we verify the properties in items~\ref{Amodelcomplcore}--\ref{AppconstructK3}.
We only explicitly cover the case where $I_4$ is present,  our arguments clearly also hold in the case without $I_4$.

\smallskip{\textit{Proof of item~\ref{Amodelcomplcore}.}}
By item~\ref{item:Bomegacat} applied to $\struct{A}$, $\ordblowup{\struct{A}}$ is $\omega$-categorical, and therefore so is its reduct $\blowup{\struct{A}}$. 
Next, assume that $\struct{A}$ is a model-complete core and suppose that $e^{\ast}$ is an endomorphism of $\blowupi{\struct{A}}$. Let $S^{\ast}$ be an arbitrary finite subset of $\mathbb{Q}\times A$.
Since $e^{\ast}$ preserves $\neq$, it is injective. Since $e^{\ast}$ preserves $E^{\blowupi{\struct{A}}}$, which is an equivalence relation, it naturally acts on its classes.
Let $e$ be the map on $(\mathbb{Q}\times A)/E^{\blowupi{\struct{A}}}$ mapping each $[x]_{E^{\blowupi{\struct{A}}}}$ to $[e^{\ast}(x)]_{E^{\blowupi{\struct{A}}}}$, and let $S\coloneqq \{[x]_{E^{\blowupi{\struct{A}}}} \mid x\in S^{\ast}\}$.
Similarly as in Proposition~\ref{prop:basic_properties_wreath_products}\eqref{item:aut_wreath}, we have that $e$ is an endomorphism of $\struct{A}$. 
Since $\struct{A}$ is a model-complete core, there exists an automorphism $\alpha$ of $\struct{A}$ such that $\alpha|_S=e|_S$.
We claim that there exists an automorphism ${\alpha}^{\ast}$ of $\blowupi{\struct{A}}$ such that ${\alpha}^{\ast}|_{S^{\ast}}=e^{\ast}|_{S^{\ast}}$.
To see this, note that we can simply fix a  permutation  of each equivalence class of $E^{\blowupi{\struct{A}}}$ and extend $\alpha$, which is a permutation of the equivalence classes, according to it.
Since $S^{\ast}$ is finite, we can moreover select the permutations so that ${\alpha}^{\ast}$ coincides with $e^{\ast}$ on $S^{\ast}$.

Next, suppose that $e$ is an endomorphism of $\struct{A}$, and let $S$ be an arbitrary finite subset of $A$.
Define $S^{\ast}\coloneqq\{0\}\times S$. 
Let $e^{\ast}$ be the map on $\mathbb{Q}\times A$ defined via $e^{\ast}(q,a)\coloneqq(q,e(a))$.
This function is an endomorphism of $\blowupi{\struct{A}}$.   
Since $\blowupi{\struct{A}}$ is a model-complete core, there exists an automorphism ${\alpha}^{\ast}$ of 
$\blowupi{\struct{A}}$ such that ${\alpha}^{\ast}|_{S^{\ast}}=e^{\ast}|_{S^{\ast}}$.
Let $\alpha$ be the map on $A$ mapping each $[x]_{E^{\blowupi{\struct{A}}}}$ to $[e^{\ast}(x)]_{E^{\blowupi{\struct{A}}}}$.
As before, we have that  $\alpha$ is a well-defined automorphism of $\struct{A}$ extending $e$ on $S$.

\smallskip{\textit{Proof of item~\ref{AppconstructK3}.}}
One direction is simple  (and does not use the assumption on the orbit growth), because $\blowupi{\struct{A}}$ clearly pp-constructs $\struct{A}$;
the pp-construction is given by $\struct{A} =(\blowupi{\struct{A}}/E^{\blowupi{\struct{A}}})|^{\tau}$. Thus, if $\struct{A}$ pp-constructs $\struct{K}_3$, then so does $\blowupi{\struct{A}}$. 

For the other direction, we use  a characterization via the pseudo-Siggers identity from~\cite[Thms.~1.3 and~3.4]{barto2019equations}.
Suppose that $\struct{A}$ does not pp-construct $\struct{K}_3$.
Let $\struct{C}$ be the model-complete core of $\struct{A}$.
Since $\struct{A}$ and $\struct{C}$ are homomorphically equivalent, $\struct{C}$ also does not pp-construct 
$\struct{K}_3$.
Then, by Theorem~1.3 in~\cite{barto2019equations}, there exist $\alpha,\beta,s\in \Pol(\struct{C})$ witnessing the pseudo-Siggers identity. 
Consider the structure $\blowupi{\struct{C}}$.
By item~\ref{Amodelcomplcore}, $\blowupi{\struct{C}}$ is an $\omega$-categorical model-complete core.
Note that we can lift the homomorphic equivalence between $\struct{A}$ and $\struct{C}$ to a homomorphic equivalence between $\blowupi{\struct{A}}$ and $\blowupi{\struct{C}}$ by setting, for any homomorphism $h$ between $\struct{A}$ and $\struct{C}$ and irrespectively of its direction, $h^{\ast}(x,y)\coloneqq (e(x,y),h(y))$, where $e$ is an arbitrary injection from the Cartesian product of $\mathbb{Q}$ with the domain of $h$ into $\mathbb{Q}$.
The inclusion of the injection in the first coordinate ensures that $h^{\ast}$ preserves $I_4$ and $\neq$ (because they are preserved by all injective operations).
Since $\blowupi{\struct{A}}$ and $\blowupi{\struct{C}}$ are homomorphically equivalent, $\blowupi{\struct{C}}$ is the model-complete core of $\blowupi{\struct{A}}$.
Next, we lift the pseudo-Siggers identity from $\Pol(\struct{C})$ to $\Pol\blowupi{\struct{C}}$.
Let $e$ be an arbitrary injection from $\mathbb{Q}\times C$ into $\mathbb{Q}$.
The structure $(\mathbb{Q};\neq, I_4)$ is an $\omega$-categorical model-complete core that does not pp-construct  $\struct{K}_3$~\cite[Thms.~12.0.1,~12.7.3,~12.9.2, and Cor.~6.4.4]{bodirsky2021complexity}. 
Hence, by Theorem~1.3 in~\cite{barto2019equations} (since pp-constructions are more general than pp-interpretations with parameters), there are  $\alpha',\beta',s'\in \Pol(\mathbb{Q};\neq, I_4)$ witnessing the pseudo-Siggers identity.
Set:

\smallskip 
\begin{itemize}
    \item $\alpha^{\ast}(x,y)\coloneqq   (\alpha'(x),\alpha(y))$,  
    \item $\beta^{\ast}(x,y)\coloneqq  (\beta'(x),\beta(y))$, and 
    \item $ s^{\ast}((x_1,y_1),\dots, (x_6,y_6))\coloneqq  (s'(e(x_1,y_1),\dots, e(x_6,y_6)),s(y_1,\dots, y_6)).$
\end{itemize} 
\smallskip 

It is easy to verify that $\alpha^{\ast},\beta^{\ast},s^{\ast}$ are polymorphisms of $\blowupi{\struct{C}}$ and that these operations witness the pseudo-Siggers identity.
It moreover follows by inspection of the proof of  Proposition~\ref{prop:basic_properties_wreath_products}\eqref{item:aut_wreath_omega_cat} that also the structure $\blowupi{\struct{A}}$ has less than double exponential orbit growth, and therefore so does its model-complete core $\blowupi{\struct{C}}$ by the proof of \cite[Cor.~1.8]{barto2019equations}. 
Combining this with the pseudo-Siggers identity, it follows from Theorem~3.4 in~\cite{barto2019equations} (whose item~(ii) is equivalent to the pp-construction of $\struct{K}_3$ by Theorem~1.8 in~\cite{barto2018wonderland}) that $\blowupi{\struct{A}}$ does not pp-construct $\struct{K}_3$.
\end{proof}

With the proof of Theorem~\ref{thm:polinjective} at hand, we
can easily prove Corollary~\ref{cor:binaryInj}.
\binaryInj*  
\begin{proof}[Proof of Corollary~\ref{cor:binaryInj}]
Suppose that $\struct{A}$ does not pp-construct $\struct{K}_3$.
By Theorem~\ref{thm:polinjective}\eqref{AppconstructK3}, the structure $\blowupi{\struct{A}}$ does not pp-construct $\struct{K}_3$ either.
Let $\struct{C}$ be the model-complete core of $\struct{A}$.
By the proof of Theorem~\ref{thm:polinjective}\eqref{AppconstructK3}, we have that $\blowupi{\struct{C}}$ is the model-complete core of $\blowupi{\struct{A}}$.
Hence, also $\blowupi{\struct{C}}$ does not pp-construct $\struct{K}_3$.

By \cite[Thm.~3.14]{MarimonPinsker23}, we have that $\Pol\blowupi{\struct{C}}$ contains a binary essential operation $f$.    
Let $g$ and $h$ be any homomorphisms witnessing the homomorphic equivalence between $\blowupi{\struct{A}}$ and $\blowupi{\struct{C}}$, respectively. 
Then $(x,y) \mapsto h \circ f(g(x),g(y))$ is a binary essential polymorphism of 
$\blowupi{\struct{A}}$.
Since $\blowupi{\struct{A}}$ is Pol-injective, this operation must be injective. 
\end{proof}

 We conclude this section with an example. 

\begin{example} \label{ex:two_ex}  Consider the two structures 
\begin{align*}
\struct{A}_1 & \coloneqq (\{0,1\};\{1\}, \{(x,y,z)\in \{0,1\}^3 \mid x+y+z=0 \bmod 2\}), \\
\struct{A}_2 & \coloneqq (\mathbb{Q};<,\{(x,y,z) \in \mathbb{Q}^{3} \mid x\geq y \text{ or } x\geq z\}).
\end{align*} 
The former is a finitely bounded homogeneous model-complete core \emph{with} algebraicity (due to it being finite) and the latter is a finitely bounded homogeneous model-complete core \emph{without} algebraicity. 
Note that $\struct{A}_1$ is preserved by the symmetric operation \[(x_1,\dots,x_n)\mapsto x_1+\cdots+x_n \bmod 2\] for every odd $n\geq 2$ and $\struct{A}_2$ is preserved by the symmetric operation \[(x_1,\dots,x_n)\mapsto \min\{x_1,\dots,x_n\}\] for every $n\geq 2$. Thus, in both cases the polymorphism clone satisfies the cyclic identity of arity $n$ for some $n\geq 2$.
Since pp-constructions preserve the satisfaction of height-1 identities in polymorphism clones~\cite{barto2018wonderland} and no cyclic operation $f$ of arity $n\geq 2$ preserves $\neq$ over an infinite set, $\struct{A}_i$ does not pp-construct $\blowup{\struct{A}_i}$ for both $i\in [2]$. 
Hence, our Datalog-reduction in Theorem~\ref{thm:removing_algebraicity} is not subsumed by gadget reductions.
\end{example}

\section{\texorpdfstring{$\omega$}{omega}-categorical cheeses for PCSPs} \label{section:sandwiches}

In the present section, we give a proof of Theorem~\ref{thm:main_theorem_sandwiches}.

\subsection{Full powers}

    Our basic tool for creating $\omega$-categorical cheeses for PCSPs that are not finitely tractable are  \emph{full powers} (see, e.g.,~Bodirsky~\cite[Sect.~3.5]{bodirsky2021complexity}).
    Roughly speaking, a full power $\fpwr{\struct{A}}{d}$ is a higher-dimensional representation of $\struct{A}$ that is pp-bi-interpretable with $\struct{A}$.
    Its central feature is that its polymorphisms are the polymorphisms of $\struct{A}$ acting on $d$-tuples, thus allowing to factor it by the orbit-equivalence of $\Aut(\struct{A})$ on $d$-tuples (rather than on elements). 
     In other words, the orbits of $d$-tuples of $\struct{A}$ are represented by orbits of $1$-tuples (elements) of $\fpwr{\struct{A}}{d}$, which is crucial in applications of the sandwich method. 
    Let $\struct{A}$ be a relational structure with a signature $\tau$ and let $d\in\mathbb{N}$ be arbitrary.  
    The $d$-th \emph{full power} of $\struct{A}$, denoted $\fpwr{\struct{A}}{d}$, is the structure with domain $A^d$ and the following relations for every $k \in [d]$:
     \begin{itemize}
         \item for every $R\in \tau$ of arity $k$ and every injection $\iota\colon [k] \rightarrow [d]$, the unary relation 
         \begin{align*}
            R^{\fpwr{\struct{A}}{d}}_{\iota} \coloneqq \{ (a_1\dots, a_d)\in A^d \mid  (a_{\iota(1)},\dots,a_{\iota(k)}) \in R^\struct{A} \};
         \end{align*}
         \item  for every $R\in \tau$ of arity $k$ and every function $\iota\colon [k] \rightarrow [d]$, the $k$-ary relation
         \begin{align*}
             \widehat{R}^{ \fpwr{\struct{A}}{d}}_{\iota} \coloneqq \{ ((a^1_1,\dots, a^1_d),\dots, (a^k_1,\dots, a^k_d)) \in (A^d)^k \mid  (a^1_{\iota(1)},\dots,a^k_{\iota(k)})\in R^{\struct{A}}
               \};
         \end{align*}  
         \item for all functions $\iota,\iota' \colon [k] \rightarrow [d]$, the binary \emph{compatibility relation} 
         \begin{align*}
             E^{\fpwr{\struct{A}}{d}}_{\iota,\iota'} \coloneqq \{ ((a^1_1\dots, a^1_d),(a^2_1,\dots,a^2_d))\in (A^d)^2 \mid \forall i \in [k] \colon a^1_{\iota(i)}=a^2_{\iota'(i)}    \}.
         \end{align*}
          
    \end{itemize}  
 Below we give several important properties of full powers.
 For an operation $f\colon A^n \rightarrow A$, we denote the component-wise action of $f$ on $d$-tuples by $f\acts A^d$. For a set $F$ of operations on $A$, we set  $F\acts A^d \coloneqq \{f\acts A^d \mid  f\in F\}$.

\begin{restatable}[Basic properties of full powers]{proposition}{fullpower} \label{prop:basic_properties_full_powers}
Let $\struct{A}$ be a relational structure over a finite signature $\tau$ and let its relations be of arity $\leq d$. Then:
    \begin{enumerate}
        \item \label{item:fpwrpolequal} $\Pol(\fpwr{\struct{A}}{d})$ consists of the component-wise actions of $\Pol(\struct{A})$ on $d$-tuples.  
        \item \label{item:fpwrppconstr} The structures $\struct{A}$ and $\fpwr{\struct{A}}{d}$ are pp-bi-interpretable.  
        \item \label{item:fpwromegacat} If $\struct{A}$ is $\omega$-categorical, then $\fpwr{\struct{A}}{d}$ is $\omega$-categorical.
        \item \label{item:fpwrhomog} If $\struct{A}$ is homogeneous, then $\fpwr{\struct{A}}{d}$ is homogeneous.
        \item \label{item:fpwrmodcompl} If $\struct{A}$ is a model-complete core, then $\fpwr{\struct{A}}{d}$ is a model-complete core.
              \item \label{item:fpwrfinbou} If $\struct{A}$ is finitely bounded, then $\fpwr{\struct{A}}{d}$ is finitely bounded.
        \item \label{item:fpwrramsey} If $\struct{A}$ is homogeneous Ramsey, then $\fpwr{\struct{A}}{d}$ is homogeneous Ramsey.
        \item \label{item:fpwrreduct} If $\struct{A}$ is a reduct of $\struct{B}$, then $\fpwr{\struct{A}}{d}$ is a reduct of $\fpwr{\struct{B}}{d}$.
    \end{enumerate} 

 \end{restatable}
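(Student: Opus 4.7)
The plan is to organise the proof around item~\ref{item:fpwrpolequal}, which will serve as the central structural fact from which most other items follow quickly; items~\ref{item:fpwrhomog} and~\ref{item:fpwrfinbou} will require more hands-on treatment, with the latter being the main obstacle. Items~\ref{item:fpwrreduct} and~\ref{item:fpwrppconstr} are essentially built into the definition: for~\ref{item:fpwrreduct}, restricting $\tau$ to a subsignature simply removes the relations $R^{\fpwr{\struct{A}}{d}}_\iota$ and $\widehat{R}^{\fpwr{\struct{A}}{d}}_\iota$ carrying the discarded symbols, while the compatibility relations are unchanged; for~\ref{item:fpwrppconstr}, each relation of $\fpwr{\struct{A}}{d}$ is by construction pp-defined from $\struct{A}$ using $d$-tuples of variables for each element of $A^d$, and conversely $\struct{A}$ is pp-interpreted in $\fpwr{\struct{A}}{d}$ by taking $A^d$ as the interpreting domain and the compatibility relation $S^{\fpwr{\struct{A}}{d}}_{\iota,\iota}$ with $k=1$, $\iota\colon 1\mapsto 1$, as the interpreting equivalence. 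Both directions are factorisation-free.

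For item~\ref{item:fpwrpolequal}, the inclusion from right to left is a direct verification. For the converse, given $F\in\Pol(\fpwr{\struct{A}}{d})$ of arity $n$, the compatibility relation $S^{\fpwr{\struct{A}}{d}}_{\iota_i,\iota_i}$ with $k=1$ and $\iota_i\colon 1\mapsto i$ says that two $d$-tuples agree in coordinate $i$; its preservation forces the $i$-th coordinate of $F(\bar x_1,\dots,\bar x_n)$ to depend only on the $i$-th coordinates of the $\bar x_j$, so $F$ splits into $d$ operations $f_1,\dots,f_d$ on $A$. Compatibility relations $S^{\fpwr{\struct{A}}{d}}_{\iota,\iota'}$ with $\iota\neq\iota'$, which identify coordinate $i$ of one tuple with coordinate $j\neq i$ of another, then force $f_1=\cdots=f_d$, giving a single operation $f$; and preservation of the unary relations $R^{\fpwr{\struct{A}}{d}}_\iota$ for injective $\iota$ yields $f\in\Pol(\struct{A})$. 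Specialising the argument to bijective, respectively unary, maps will give the analogous identifications $\Aut(\fpwr{\struct{A}}{d})\cong\Aut(\struct{A})$ and $\End(\fpwr{\struct{A}}{d})\cong\End(\struct{A})$, in each case by diagonal action on $A^d$.

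From these identifications, items~\ref{item:fpwromegacat},~\ref{item:fpwrmodcompl}, and~\ref{item:fpwrramsey} follow immediately: an orbit of a $k$-tuple of $\fpwr{\struct{A}}{d}$ under its automorphism group corresponds bijectively to an orbit of the associated $dk$-tuple of $\struct{A}$ under $\Aut(\struct{A})$, yielding $\omega$-categoricity via Ryll--Nardzewski; the analogous matching on endomorphisms transfers the model-complete core property; and the topological isomorphism of the automorphism groups together with the Kechris--Pestov--Todor\v{c}evi\'c correspondence yields the Ramsey property. Item~\ref{item:fpwrhomog} is handled in the same spirit: the compatibility relations force any partial isomorphism between finite substructures of $\fpwr{\struct{A}}{d}$ to act diagonally, so it descends to a partial isomorphism between finite substructures of $\struct{A}$, which extends to a full automorphism by homogeneity of $\struct{A}$, and this extension lifts back diagonally to $\fpwr{\struct{A}}{d}$.

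The hard part will be item~\ref{item:fpwrfinbou}, where one must exhibit an explicit finite set of bounds for $\fpwr{\struct{A}}{d}$. My plan is to take the union of three groups of universal axioms, each involving a bounded number of variables. First, axioms characterising the compatibility relations $S^{\fpwr{\struct{A}}{d}}_{\iota,\iota'}$ as coordinate equalities, encoding reflexivity of $S_{\iota,\iota}$, symmetry, transitivity, and the interaction of different $\iota,\iota'$ under composition. Second, compatibility axioms asserting that the unary $R^{\fpwr{\struct{A}}{d}}_\iota$ and the $k$-ary $\widehat{R}^{\fpwr{\struct{A}}{d}}_\iota$ for the same $R\in\tau$ determine each other through the compatibility relations in the expected way. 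Third, a lifting of the finitely many bounds of $\struct{A}$ (or of its finitely bounded parent) to $\fpwr{\struct{A}}{d}$, obtained by stating that no configuration of $d$-tuples whose coordinate projections, traced through the compatibility relations, witness a forbidden substructure of $\struct{A}$ may occur. Since the arity of all involved relations, and the size of each axiom, is bounded in terms of $d$ and the maximum arity in $\tau$, this union is finite, and it will be straightforward to verify that a finite $(\tau\cup\{S,\widehat{R}\text{'s etc.}\})$-structure satisfies these axioms if and only if it embeds into $\fpwr{\struct{A}}{d}$.
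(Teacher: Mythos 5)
Your proposal is correct and follows essentially the same route as the paper: item~\ref{item:fpwrpolequal} is proved via preservation of the compatibility relations and then drives items~\ref{item:fpwromegacat}, \ref{item:fpwrmodcompl} and \ref{item:fpwrramsey} through the induced identifications of automorphism and endomorphism groups (plus the KPT correspondence), with items~\ref{item:fpwrppconstr}, \ref{item:fpwrhomog} and \ref{item:fpwrreduct} handled directly as you describe. The only point of divergence is item~\ref{item:fpwrfinbou}, where the paper simply defers to \cite[Lemma~3.5.4]{bodirsky2021complexity} while you sketch the explicit universal axiomatization; your three groups of axioms are the right ones (just make sure the first group also forces $S_{\mathrm{id},\mathrm{id}}$ to coincide with genuine equality, which is what makes the reconstructed map injective), and note as a cosmetic point that the reverse pp-interpretation via the first projection is not literally factorisation-free, since its kernel is $S_{\iota_1,\iota_1}$ rather than equality.
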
  
 We include a full proof of Proposition~\ref{prop:basic_properties_full_powers} for the convenience of the reader.

 \begin{proof} For $i\in [d]$, we denote by $\iota_i$ the function from $[1]$ to $[d]$ defined by $\iota_i(1)\coloneqq i$.

        For~\eqref{item:fpwrpolequal}, let $f\in  \Pol(\fpwr{\struct{A}}{d})$ be arbitrary, and let $n$ be its arity.
        In what follows, we shall use the abbreviation $\bar{a}=(a_1,\dots,a_n)$.
        If $\bar{a}$ appears as the $i$-th row of $(\bar{x}_1,\dots,\bar{x}_n)\in (A^d)^n$ (viewed as a $d\times n$-matrix) and as the $j$-th row in $(\bar{y}_1,\dots,\bar{y}_n)\in (A^d)^n$, then, since $f$ preserves $E^{\fpwr{\struct{A}}{d}}_{\iota_i,\iota_j}$, the $i$-th entry of $f(\bar{x}_1,\dots,\bar{x}_n)$ equals the $j$-th entry of $f(\bar{y}_1,\dots,\bar{y}_n)$.
        This ensures that there exists a well-defined operation $f'\colon A^n\to A$ such that $f=f'\acts A^d$.
        The definition of $R^{\fpwr{\struct{A}}{d}}_{\mathrm{id}}$ for every $k$-ary relation $R^\struct{A}$, where $\mathrm{id}$ is the identity map on $[k]$, guarantees that $f'\in \Pol(\struct{A})$.
        Thus, we have that $\Pol(\fpwr{\struct{A}}{d})\subseteq\Pol(\struct{A})\acts A^d$. 
        The other inclusion is trivial.
        
        For~\eqref{item:fpwrppconstr}, we proceed as in the proof of \cite[Lem.~3.5.4]{bodirsky2021complexity}; the identity map yields a $d$-dimensional pp-interpretation of $\fpwr{\struct{A}}{d}$ in $\struct{A}$ and the first projection gives a 1-dimensional pp-interpretation of $\struct{A}$ in $\fpwr{\struct{A}}{d}.$
        It is easy to verify that this gives a pp-bi-interpretation between $\struct{A}$ and $\fpwr{\struct{A}}{d}$. 
        
        For~\eqref{item:fpwromegacat}, note that $\Aut(\fpwr{\struct{A}}{d})=\Aut(\struct{A})\acts A^d$ by item~\eqref{item:fpwrpolequal}.
        Since every $n$-orbit in $\fpwr{\struct{A}}{d}$ corresponds to a $dn$-orbit in $\struct{A}$, there are finitely many $n$-orbits in $\fpwr{\struct{A}}{d}$ for each $n\in \mathbb{N}$. 
        Hence, $\fpwr{\struct{A}}{d}$ is $\omega$-categorical as well.
        
        For~\eqref{item:fpwrhomog}, let $\alpha:\struct{D}_1\to\struct{D}_2$ be an isomorphism between two finite substructures of $\fpwr{\struct{A}}{d}$ and let $A_1,A_2$ be the sets of elements of $A$ appearing as entries in some $d_1\in D_1$ or $d_2\in D_2$, respectively.
        Then the relations $E^{\fpwr{\struct{A}}{d}}_{\iota_i,\iota_j}$ and $\widehat{R}^{\,\fpwr{\struct{A}}{d}}$ for $R\in\tau$ guarantee that $\alpha$ induces an isomorphism $\beta$ between $\struct{A}_1$ and $\struct{A}_2$.
        Since $\struct{A}$ is homogeneous, we can extend $\beta$ to some automorphism $\beta^\ast$ of $\struct{A}$.
        Now $\alpha^\ast\coloneqq\beta^\ast\acts A^d$ is an automorphism of $\fpwr{\struct{A}}{d}$ extending $\alpha$.
        
        For~\eqref{item:fpwrmodcompl}, let $e\in \End(\fpwr{\struct{A}}{d})$ be arbitrary and let $D$ be an arbitrary finite subset of $A^d$.
        Define $A_D$ as the set of all elements of $A$ appearing as entries in some $d\in D$.
        By item~\eqref{item:fpwrpolequal}, we have $e=e'\acts A^d$ for some $e'\in\End(\struct{A})$.
        Since $\struct{A}$ is a model-complete core, there exists $\alpha'\in\Aut(\struct{A})$ such that $\alpha'|_{A_D}=e'|_{A_D}$.
        Then $\alpha\coloneqq\alpha'\acts A^d$ is an automorphism of $\fpwr{\struct{A}}{d}$ satisfying $\alpha|_{D}=e|_{D}$. 
        
        Item~\eqref{item:fpwrfinbou} can be shown in a similar fashion as \cite[Lem.~3.5.4]{bodirsky2021complexity}.
        
        For~\eqref{item:fpwrramsey}, note that, by item~\eqref{item:fpwrpolequal} and since the topology of pointwise convergence is determined by the behaviour of functions on finite subsets, the mapping $\alpha \mapsto \alpha \acts A^d$ is a topological isomorphism from $\Aut(\struct{A})$ to $\Aut(\fpwr{\struct{A}}{d})$. 
        By Theorem~\ref{thm:kpt}, we have that $\fpwr{\struct{A}}{d}$ is homogeneous Ramsey.
        
        Item~\eqref{item:fpwrreduct} follows directly from the definition of $\fpwr{\struct{A}}{d}$.
    \end{proof}

\subsection{Our sandwich recipe}

 To construct a finite PCSP sandwich for a structure $\struct{A}$
it is sufficient to select any finite substructure $\struct{S}_1$ and finite factor $\struct{S}_2$ of $\struct{A}$.
The issue is that, in many cases, $\PCSP(\struct{S}_1,\struct{S}_2)$ will be finitely tractable, possibly because already $\CSP(\struct{S}_1)$ or $\CSP(\struct{S}_2)$ is tractable.
Proposition~\ref{prop:sandwiches} describes a general condition for structures in the scope of Conjecture~\ref{conj:bodirsky_pinsker} under which this does not happen, i.e., where $\PCSP(\struct{S}_1,\struct{S}_2)$ is provably not finitely tractable.
Intuitively, in order to obtain a non-finitely tractable sandwich for a structure in the scope of Conjecture~\ref{conj:bodirsky_pinsker}, it is enough to take a sufficiently large full power thereof, a finite substructure $\struct{S}_1$ whose polymorphisms preserve the inequality, and a finite factor $\struct{S}_2$ by an arbitrary subgroup of automorphisms preserving a linear order and acting with finitely many orbits.  

\begin{restatable}{proposition}{sandwiches} \label{prop:sandwiches}
Let $\struct{A}$ be a reduct of a linearly ordered finitely bounded homogeneous structure $\struct{B}$ such that one of the relations of $\struct{A}$ interprets as $\neq$.
    Let $d\in\mathbb{N}$ be such that the bounds of $\struct{B}$ have size $\leq d$ and its relations are of arity $\leq d-1$. 
    Then, for every finite substructure $\struct{S}$ of $\struct{A}$ with $|S|\geq 3$, every finite cheese for $\PCSP(\fpwr{\struct{S}}{d},\orbeq{\fpwr{\struct{A}}{d}}{\Aut(\struct{B})})$ pp-constructs $\struct{K}_3$.  
   
 \end{restatable}
    
We split the proof of Proposition~\ref{prop:sandwiches} into several   statements. 
We start with the following easy  observation (cf.~ also~\cite{Mottet_2025}) about pseudo-cyclic polymorphisms.

\begin{restatable}{lemma}{pseudocyclic}  
\label{lemma:pseudocyclic}  
 Let $\struct{A}$ be a reduct of a linearly ordered structure $\struct{B}$ and $\struct{S}$ a finite substructure of $\struct{A}$. If $f\in \Pol(\struct{S},\struct{A})$ is pseudo-cyclic modulo $\Aut(\struct{B})$, then $f$ is cyclic.
 \end{restatable}   
 
    \begin{proof}
        Let $n$ be the arity of $f$ and let $\alpha_1,\alpha_2\in \Aut(\struct{B})$ be such that 
        \begin{align*}
            \alpha_1\circ f(s_1,\dots,s_n)=\alpha_2\circ f(s_2,\dots,s_n,s_1) 
        \end{align*}
        for all $s_1,\dots, s_n\in S$.
        Suppose, on the contrary, that $f$ is not cyclic. 
        Then there are $s_1,\dots,s_n\in S$ such that $f(s_1,\dots,s_n)\neq f(s_2,\dots,s_n,s_1)$. 
        Let $<$ denote the linear order of $\struct{B}$ and set $\Tilde{s_i} \coloneqq (s_i,\dots,s_n,s_1,\dots,s_{i-1})$ for $i\leq n$.
        By assumption, we have $f(\Tilde{s_1})\neq f(\Tilde{s_2})$.
        Assume without loss of generality that $f(\Tilde{s_1})<f(\Tilde{s_2})$. 
        Since $\alpha_1,\alpha_2$ preserve $<$ and its complement, we have 
        \begin{align*}
            f(\Tilde{s_1})<f(\Tilde{s_2})\Leftrightarrow  \alpha_1(f(\Tilde{s_1}))<\alpha_1(f(\Tilde{s_2})) 
            \Leftrightarrow \alpha_2(f(\Tilde{s_2}))<\alpha_2(f(\Tilde{s_3})) 
            \Leftrightarrow&\enspace f(\Tilde{s_2})<f(\Tilde{s_3})\\
           \vdots \  &  \\
            \Leftrightarrow&\enspace f(\Tilde{s_n})<f(\Tilde{s_1}),
        \end{align*}
        a contradiction to $<$ being a linear order. Thus, $f$ is cyclic.
    \end{proof} 
 
The next lemma explains how height-1 identities can be lifted from full powers.
Lemma~\ref{lemma:lifting}\eqref{item:liftpol} essentially states that $\mathcal{I}(\struct{X})\coloneqq \fpwr{\struct{X}}{d}$ is a reduction from $\CSP(\struct{A})$ to $\CSP(\orbeq{\fpwr{\struct{A}}{d}}{\Aut(\struct{B})})$; we will, however, use it in the more general setting of PCSPs.
The fact that this reduction exists is a folklore result in infinite-domain constraint satisfaction, which was popularized by~\cite[Thm.~3]{bodirsky2016reducts} but in fact goes back to~\cite[Sect.~5.5]{bodirsky_pinsker_tsankov}.
We include a full proof of Lemma~\ref{lemma:lifting}; an analogous statement can be found in~\cite[Lem.~27]{Mottet_2025}. 
 
\begin{restatable}{lemma}{lifting}\label{lemma:lifting}  
Let $\struct{A}$ be a reduct of a finitely bounded homogeneous structure $\struct{B}$  and let $\struct{S}$ be a finite structure over the signature of $\struct{A}$.
Suppose that the bounds of $\struct{B}$ have size $\leq d$ and its relations
are of arity $\leq d-1$.  
\begin{enumerate}
    \item \label{item:liftpol} If there is a homomorphism $\smash{h\colon \fpwr{\struct{S}}{d}\to \orbeq{\fpwr{\struct{A}}{d}}{\Aut(\struct{B})}}$, then there exists a homomorphism $g\colon\struct{S}\to\struct{A}$ such that $h(\bar{x})=\orbeq{g(\bar{x})}{\Aut(\struct{B})}$ for every $\bar{x}\in S^d$. 
    \item \label{item:liftidentity} If 
    $\Sigma$ is a height-1 condition  satisfied in $\Pol(\fpwr{\struct{S}}{d},\orbeq{\fpwr{\struct{A}}{d}}{\Aut(\struct{B})})$, then $\Sigma$ is satisfied in $\Pol(\struct{S},\struct{A})$ modulo $\Aut(\struct{B})$.
\end{enumerate} 
 \end{restatable}  
 \begin{proof} 
We start with item~\ref{item:liftpol}.
        Let $\struct{S}$ and $h$ be as in the statement of the lemma. 
        
        First, we define an auxiliary structure $\struct{S}'$ on $S$ with the same signature as $\struct{S}$.
        For every $k$-ary relation $R^{\struct{B}}$ of $\struct{B}$, the relation $R^{\struct{S}'}$ is defined as the preimage of the relations $R_\iota^{\smash{\orbeq{\fpwr{\struct{B}}{d}}{_{\Aut(\struct{B})}}}}$ in the following sense:
        \begin{align}
              R^{\struct{S}'} \coloneqq \{   (x_{\iota(1)},\dots,x_{\iota(k)})  \in S^k \mid     \iota\colon [k]\to[d], &(x_1,\dots,x_d)\in S^d ,\nonumber\\       h&(x_1,\dots,x_d)\in R_\iota^{\smash{\orbeq{\fpwr{\struct{B}}{d}}{_{\Aut(\struct{B})}}}}\}.  \label{eq:exists}
        \end{align}  
        By the definition of full powers, for every $\iota\colon [k]\to[d]$ and all $x_1,\dots,x_d,y_{k+1},\dots,y_d\in S$, we have 
        \begin{align*}
             \big((x_{\iota(1)},\dots,x_{\iota(k)},y_{k+1}\dots,y_d),(x_{1}\dots,x_{k},x_{k+1}\dots,x_{d})\big)\in E^{\fpwr{\struct{S}}{d}}_{\mathrm{id},\iota},
        \end{align*}
        where $\mathrm{id}$ denotes the identity map on $[k]$. 
        This and the fact that $h$ is a homomorphism yield
        \begin{align}
            R^{\struct{S}'} = \{ (x_1,\dots,x_k)\in S^k \mid  \forall  x_{k+1},\dots,x_d\in S\colon  h(x_1,\dots,x_d)       \in R_{\mathrm{id}}^{\ \smash{\orbeq{\fpwr{\struct{B}}{d}}{_{\Aut(\struct{B})}}}}      \}.  \label{eq:forall}
        \end{align} 
        Next, we define an auxiliary equivalence relation $\sim$ on $S$ by relating two elements if they appear as entries of some $\bar{z}=(z_1,\dots, z_d)\in S^d$ that is mapped via $h$ to an orbit where the corresponding entries are equal:
        \begin{align}
            x\sim y :\Leftrightarrow   \ &          \exists\bar{z}\in S^d  \,\exists i_1,i_2\in [d]  \big( x=z_{i_1},\ y=z_{i_2}, \textnormal{ and }    \forall\bar{a}\in h(\bar{z})\colon  a_{i_1}=a_{i_2}\big). \label{eq:noreformulation} 
        \end{align}
        Here, $\bar{a} \in h(\bar{z})$ is to be understood in the sense that $h(\bar{z})$ represents the orbit of $\bar{a}$ under $\Aut(\struct{B})$. Note that, since $h$ preserves $E_{\iota,\iota'}$  for all $\iota,\iota'\colon [2]\to[d]$, eq.~\eqref{eq:noreformulation} is equivalent to
        \begin{align}
            x\sim y \Leftrightarrow   \forall \bar{z}\in S^d  \, \forall i_1,i_2\in [d]: \big(  x=z_{i_1} \text{ and }y=z_{i_2} \big) \Rightarrow \big(\forall\bar{a}\in h(\bar{z})\colon  a_{i_1}=a_{i_2}\big) .\label{eq:reformulation} 
        \end{align} 
        \vspace{-1em}
        \begin{claim} \label{claim:congruence}
            $\sim$ is a relational congruence on $\struct{S}'$.
        \end{claim}
        \begin{claimproof} 
            We must show that, for every $k$-ary relation $R^{\struct{S}'}$, every $i\in [k]$, and all $ y_1,\dots,y_{i-1},$ $y_{i+1},\dots,y_k,x_1,x_2\in S$, we have that: if  $x_1 \sim x_2$, then
         \begin{align*}
             R^{\struct{S}'}(y_1,\dots,y_{i-1},x_1,y_{i+1},\dots,y_k)\Leftrightarrow R^{\struct{S}'}(y_1,\dots,y_{i-1},x_2,y_{i+1},\dots,y_k) .
         \end{align*}
        To this end, define  
        \begin{align*}
            & \bar{y_1}  \coloneqq (y_1,\dots,y_{i-1},x_1,y_{i+1},\dots,y_k),  \quad\bar{z_1}  \coloneqq (y_1,\dots,y_{i-1},x_1,y_{i+1},\dots,y_k,x_2,\dots,x_2), \\ 
            &  \bar{y_2}  \coloneqq (y_1,\dots,y_{i-1},x_2,y_{i+1},\dots,y_k), \quad   
            \bar{z_2}  \coloneqq (y_1,\dots,y_{i-1},x_2,y_{i+1},\dots,y_k,x_1,\dots,x_1).  
        \end{align*}
        Here, the arity of $\bar{z}_1$ and $\bar{z}_2$ is $d$.
        As before, we denote by $\mathrm{id}$ the identity map on $[k]$.
        Additionally, $(i\mapsto k+1)$ denotes the map from $[k]$ to $[d]$ sending $i$ to $k+1$ and fixing all other elements.
        This map is well-defined since all relations are of arity $\leq d-1$.
        By eq.~\eqref{eq:reformulation}, for all $\bar{a}=(a_1,\dots, a_d)\in h(\bar{z}_1)\cup h(\bar{z}_2)$, we have  $a_i=a_{k+1}$. 
        Hence: 
        
        \[ \setlength{\arraycolsep}{0.15em}
        \begin{array}{cclcll} 
           \bar{y_1} \in  R^{\struct{S}'} &   
           \overset{\eqref{eq:exists}\&\eqref{eq:forall}}{\Longleftrightarrow} & h(\bar{z_1}) \in R_{\mathrm{id}}^{\, \smash{\orbeq{\fpwr{\struct{B}}{d}}{_{\Aut(\struct{B})}}}}  &  & \\ 
           &    \overset{\text{def.}}{\Longleftrightarrow}  & \exists \bar{a} \in h(\bar{z_1}) \cap R_{\mathrm{id}}^{\fpwr{\struct{B}}{d}} &
    \overset{\scriptscriptstyle{a_i=a_{k+1}}}{\Longleftrightarrow} &  \exists \bar{a}\in h(\bar{z_1}) \cap R_{(i\mapsto k+1)}^{\fpwr{\struct{B}}{d}}  &   \\ 
  &  & &    \overset{\text{def.}}{\Longleftrightarrow}  & h(\bar{z_1}) \in R_{(i\mapsto k+1)}^{\orbeq{\fpwr{\struct{B}}{d}}{_{\Aut(\struct{B})}}} &     \\
          &  &  &   \overset{\text{def.}}{\Longleftrightarrow}  &  h(\bar{z_2}) \in R_{\mathrm{id}}^{\, \smash{\orbeq{\fpwr{\struct{B}}{d}}{_{\Aut(\struct{B})}}}}  &    \\
        &  &  &     \overset{\mathclap{\eqref{eq:exists}\&\eqref{eq:forall}}}{\Longleftrightarrow} & \bar{y_2}\in  R^{\struct{S}'}.  &  
        \end{array}  
     \]
        This is what we had to show.
        \end{claimproof}  
 
     Finally, we combine the two constructions from before by setting $\Tilde{\struct{S}} \coloneqq \struct{S}' /{\sim}$.
  
        \begin{claim} \label{claim:embedding} $\Tilde{\struct{S}}$ embeds into $\struct{B}$.
        \end{claim} 
        \begin{claimproof} 
        Suppose, on the contrary, that this is not the case.
        Then there exists some bound $\struct{F}$ of $\struct{B}$ that embeds into $\Tilde{\struct{S}}$.
        Without loss of generality, $F=\{[x_1]_\sim,\dots,[x_n]_\sim\}\subseteq \Tilde{S}$.
        Since all bounds have size $\leq d$, we have $n\leq d$.
        As before, we shall use the abbreviations $\bar{x}\coloneqq (x_1,\dots, x_d)$ and $\bar{a}\coloneqq (a_1,\dots, a_d)$.
        Now, for every $k$-ary relation $R^{\Tilde{\struct{S}}}$ and every $\iota\colon [k]\to[n]$, we have  
        \begin{align*}
            ([x_{\iota(1)}]_\sim,\dots,[x_{\iota(k)}]_\sim)\in R^{\Tilde{\struct{S}}}  &\Leftrightarrow \exists x_1'\in[x_{\iota(1)}]_\sim,\dots,x_k'\in[x_{\iota(k)}]_\sim\colon 
 (x_1',\dots,x_k') \in R^{\struct{S}'} \\
            &\Leftrightarrow \smash{(x_{\iota(1)},\dots,x_{\iota(k)}) \in R^{\struct{S}'}} \\
            &\Leftrightarrow \forall x_{n+1},\dots,x_d\in S\colon    h(\bar{x}) \in R_{\iota}^{\smash{\orbeq{\fpwr{\struct{B}}{d}}{_{\Aut(\struct{B})}}}} \\
            &\Leftrightarrow \forall x_{n+1},\dots,x_d\in S\,\forall \bar{a} \in h(\bar{x})\colon \bar{a} \in  R_{\iota}^{\fpwr{\struct{B}}{d}} \\
            &\Leftrightarrow \forall x_{n+1},\dots,x_d\in S\,\forall \bar{a}\in h(\bar{x})\colon  (a_{\iota(1)},\dots,a_{\iota(k)}) \in R^{\struct{B}}.
        \end{align*}
        By fixing some $x_{n+1},\dots,x_d\in S$ and setting $e([x_i]_\sim) \coloneqq a_i$ for $i\leq n$ and some $\bar{a}\in h(\bar{x})$, we have that $e$ is an embedding of $\struct{F}$ into $\struct{B}$, a contradiction.
        \end{claimproof}
         
        Let $e'$ be an embedding from $\Tilde{\struct{S}}$ to $\struct{B}$, which exists by Claim~\ref{claim:embedding}.
        It follows from Claim~\ref{claim:congruence} that $e'$ induces a homomorphism $g\colon \struct{S}'\to\struct{B}$ via $g(x) \coloneqq e'([x]_\sim)$.
        \begin{claim} \label{claim:orbits}
            $h(\bar{x})=\orbeq{g(\bar{x})}{\Aut(\struct{B})}$ for every $\bar{x}\in S^d$.
        \end{claim}
        \begin{claimproof}
         Let $i,j\in [d]$ be arbitrary. 
          If every $\bar{y} \in h(\bar{x})$ satisfies $y_i=y_j$, then $x_i\sim x_j$ and thus $g(x_i)=g(x_j)$.
          On the other hand, if $g(x_i)=g(x_j)$, then $x_i\sim x_j$ since $e'$ is an embedding.
          Thus, every $\bar{y} \in h(\bar{x})$ satisfies $y_i=y_j$.
          Moreover, for all $x_1,\dots,x_d\in S$, every $k$-ary relation $R^{\struct{S}'}$, and every $\iota\colon [k]\to[d]$, we have
        \begin{align*}
            h(\bar{x}) \in R_{\iota}^{\smash{\orbeq{\fpwr{\struct{B}}{d}}{_{\Aut(\struct{B})}}}} &\Leftrightarrow (x_{\iota(1)},\dots,x_{\iota(k)}) \in R^{\struct{S}'} \\
            & \Leftrightarrow \smash{([x_{\iota(1)}]_\sim,\dots,[x_{\iota(k)}]_\sim) \in R^{\Tilde{\struct{S}}}} \\
            &\Leftrightarrow (g(x_{\iota(1)}),\dots, g(x_{\iota(k)})) \in R^{\struct{B}} \\
            &\Leftrightarrow \orbeq{(g(x_1),\dots, g(x_d))}{\Aut(\struct{B})} \in R_{\iota}^{\smash{\orbeq{\fpwr{\struct{B}}{d}}{_{\Aut(\struct{B})}}}}. 
        \end{align*}
        Now the statement of the claim follows from the homogeneity of $\struct{B}$.
        \end{claimproof}

       \begin{claim}
           $g$ is a homomorphism from $\struct{S}$ to $\struct{A}$.
       \end{claim}
        \begin{claimproof}
            Let $R\in \tau$ and $(x_1,\dots,x_k)\in R^\struct{S}$ be arbitrary.
        Since $h$ is a homomorphism, we have $ h(x_1,\dots,x_d) \in R_{\mathrm{id}}^{\,\smash{\orbeq{\fpwr{\struct{A}}{d}}{_{\Aut(\struct{B})}}}} $ for all $x_{k+1},\dots,x_d\in S$. 
        Since every automorphism of $\struct{B}$ is also an automorphism of $\struct{A}$, by Claim~\ref{claim:orbits}, we have that $(g(x_1),\dots,g(x_k))\in R^{\struct{A}} $.
        \end{claimproof}  

        We continue with item~\ref{item:liftidentity}.
    It is not hard to show that $\fpwr{(\struct{S}^n)}{d} \simeq (\fpwr{\struct{S}}{d})^n$ (cf.~\cite[Cor.~22]{Mottet_2025}) for all $d,n\in \mathbb{N}$.
        Hence, every homomorphism $\smash{h\colon (\fpwr{\struct{S}}{d})^{n}\to \orbeq{\fpwr{\struct{A}}{d}}{\Aut(\struct{B})}}$ is also a homomorphism from $\fpwr{(\struct{S}^{n})}{d}$ to $\orbeq{\fpwr{\struct{A}}{d}}{\Aut(\struct{B})}$.
        Suppose that $h$ is a function symbol appearing in $\Sigma$ that is assigned an element of $\Pol(\fpwr{\struct{S}}{d},\orbeq{\fpwr{\struct{A}}{d}}{\Aut(\struct{B})})$. 
        By item~\ref{item:liftpol}, there is a homomorphism $g\colon \struct{S}^{n}\to \struct{A}$ such that 
        \begin{align*}
            h\begin{pmatrix}\bar{s_1} \\\vdots\\\bar{s_d}\end{pmatrix}=\orbeq{\begin{pmatrix}g(\bar{s_1})\\\vdots\\ g(\bar{s_d}) \end{pmatrix}}{\Aut(\struct{B})}   
        \end{align*}
        for all $\bar{s_1},\dots,\bar{s_d} \in S^{n}$.
        For each function symbol in $\Sigma$, we fix such a homomorphism. 
        Suppose now that $h_1,h_2\in  \Pol(\fpwr{\struct{S}}{d},\orbeq{\fpwr{\struct{A}}{d}}{\Aut(\struct{B})})$ witness the satisfaction of the identity 
        \begin{align*}
            h_1(x_{i_1},\dots,x_{i_{n}})\approx h_2(x_{i_{n+1}},\dots,x_{i_{n+k}}),
        \end{align*}
        where all $i_\ell\leq m$ for some $m\in\mathbb{N}$, and let $g_1:\struct{S}^{n}\to \struct{A}$ and $g_2:\struct{S}^{k}\to \struct{A}$ be the associated homomorphisms.

        Let $\bar{s}_1,\dots,\bar{s}_{m}$ be arbitrary tuples over $S$ of arity $k$ large enough so that, for every  $\bar{s}\in S^{m}$, there exists $\ell \in [k]$ such that $\bar{s}$ equals the $\ell$-th row in $(\bar{s}_1,\dots,\bar{s}_{m})$ (viewed as a $k\times m$-matrix).
        Define the $m$-ary operations $\Tilde{g_1}$ and $\Tilde{g_2}$ by 
        \[
         \Tilde{g_1}(x_1,\dots,x_m) \coloneqq g_1(x_{i_1},\dots,x_{i_n}) \qquad \text{and}  \qquad \Tilde{g_2}(x_1,\dots,x_m) \coloneqq g_2(x_{i_{n+1}},\dots,x_{i_{n+k}}).
        \] 
        Then  $\Tilde{g_1}(\bar{s}_1,\dots,\bar{s}_{m})$ and $\Tilde{g_2}(\bar{s}_1,\dots,\bar{s}_m)$ satisfy the same equalities and relations w.r.t.~$\struct{B}$ on all $d$-subtuples.
        Since all relations of $\struct{B}$ have arity $\leq d$ and $\struct{B}$ is homogeneous, $\Tilde{g_1}(\bar{s}_1,\dots,\bar{s}_{m})$ and $\Tilde{g_2}(\bar{s}_1,\dots,\bar{s}_m)$ lie in the same orbit.
        Thus, there are $\alpha_1,\alpha_2\in\Aut(\struct{B})$ such that
        \begin{align*}
            \alpha_1\circ\Tilde{g_1}(\bar{s}_1,\dots,\bar{s}_{m})=\alpha_2\circ\Tilde{g_2}(\bar{s}_1,\dots,\bar{s}_{m}).
        \end{align*}
        This is equivalent to
        \begin{align*}
            \alpha_1\circ g_1(s_{i_1},\dots,s_{i_n})=\alpha_2\circ g_2(s_{i_{n+1}},\dots,s_{i_{n+k}})
        \end{align*}
        for all $s_1,\dots,s_{m}\in S$. Thus, $\Sigma$ is satisfied in $\Pol(\struct{S},\struct{A})$ modulo $\Aut(\struct{B})$. 
    \end{proof}

Now we can finally prove Proposition~\ref{prop:sandwiches}.
\begin{proof}[Proof of Proposition~\ref{prop:sandwiches}] Clearly, for every finite substructure $\struct{S}$ of $\struct{A}$, we have $ \fpwr{\struct{S}}{d}  \rightarrow \fpwr{\struct{A}}{d} \rightarrow \orbeq{\fpwr{\struct{A}}{d}}{\Aut(\struct{B})}$.
Let $S\subseteq A$ be arbitrary with $3 \leq |S| < \infty$.
Suppose that there is a finite cheese $\struct{D}$ of $(\fpwr{\struct{S}}{d},\orbeq{\fpwr{\struct{A}}{d}}{\Aut(\struct{B})})$ that does not pp-construct $\struct{K}_3$. 
Then, by Theorem \ref{thm:barto12}, there exists a cyclic operation $f$ in $\Pol(\struct{D})$ of some arity $n$. 
Take any homomorphisms $g\colon \fpwr{\struct{S}}{d} \rightarrow \struct{D}$ and $h\colon \struct{D} \rightarrow \orbeq{\fpwr{\struct{A}}{d}}{\Aut(\struct{B})}$. 
Then we get a cyclic $n$-ary $f'\in \Pol(\fpwr{\struct{S}}{d},\orbeq{\fpwr{\struct{A}}{d}}{\Aut(\struct{B})})$ by setting 
\[
    f'(x_1,\dots,x_n) \coloneqq h\circ f(g(x_1),\dots,g(x_n)).
\]
  By Lemma~\ref{lemma:lifting}\eqref{item:liftidentity} there is some $n$-ary $g\in\Pol(\struct{S},\struct{A})$ that is pseudo-cyclic modulo $\Aut(\struct{B})$. 
By Lemma~\ref{lemma:pseudocyclic}, this operation is cyclic. Take pairwise distinct elements $s_1,s_2,s_3\in S$. 
We now show that the cyclicity of $g$ contradicts the fact that it preserves $\neq$. 
To this end, we consider the following three cases:

\medskip \emph{$n\equiv 0  \bmod 3$:} $ 
    g(s_1,s_2,s_3,\dots,s_1,s_2,s_3) = g(s_2,s_3,s_1,\dots,s_2,s_3,s_1) $
    
    \medskip \emph{$n\equiv 1  \bmod 3$:} 
    $ 
    g(s_1,s_2,s_3,\dots,s_1,s_2,s_3,s_1) = g(s_3,s_1,s_2,\dots,s_3,s_1,s_1,s_2)
    $ 
    
    \medskip \emph{$n\equiv 2  \bmod 3$:}
    $ 
    g(s_1,s_2,s_3,\dots,s_1,s_2,s_3,s_1,s_2) = g(s_2,s_3,s_1,\dots,s_2,s_3,s_1,s_2,s_1)
    $ 
    
    \medskip
In all three cases, $g$ does not preserve $\neq$.
 Hence, the  assumption that such $\struct{D}$ exists was wrong, and consequently $\PCSP(\fpwr{\struct{S}}{d},\orbeq{\fpwr{\struct{A}}{d}}{\Aut(\struct{B})})$ is not finitely tractable.
\end{proof}

 The prerequisites of Proposition~\ref{prop:sandwiches} are fairly general but clearly not sufficient to cover the entire scope of Conjecture~\ref{conj:bodirsky_pinsker} (cf.~Section~\ref{section:preprocessing}). Given an arbitrary reduct $\struct{A}$ of a finitely bounded homogeneous structure $\struct{B}$, one does not generally have an inequality relation on $\struct{A}$ or a linear order on $\struct{B}$; this is where Theorem~\ref{thm:removing_algebraicity} becomes useful.
On the one hand, removing algebraicity from $\struct{A}$ using Theorem~\ref{thm:removing_algebraicity} allows us to expand our structure by the binary inequality relation without fundamentally changing the complexity of its CSP.
On the other hand, removing algebraicity from $\struct{B}$ allows us to take the generic superposition of $(\mathbb{Q};<)$ and $\struct{B}$ (see Proposition~\ref{prop:generic_superpositions}), which can be seen as expanding $\struct{B}$ by a linear order without losing the property of being finitely bounded homogeneous.

\begin{proposition}[Section~2.3.6~in~\cite{bodirsky2021complexity}] \label{prop:generic_superpositions} Let $\struct{A}_1, \struct{A}_2$ be  countable homogeneous structures without algebraicity over disjoint finite relational signatures $\tau_1,\tau_2$.
Then there exists an up to isomorphism unique countable homogeneous $(\tau_1\cup\tau_2)$-structure $\struct{A}_1\astlarge \struct{A}_2$, called the \emph{generic superposition} of $\struct{A}_1, \struct{A}_2$, such that any finite $(\tau_1\cup\tau_2)$-structure embeds into  $\struct{A}_1\astlarge \struct{A}_2$ if and only if its $\tau_i$-reduct embeds into  $\struct{A}_i$ for both $i\in\{1,2\}$. Moreover, if $\struct{A}_1,\struct{A}_2$ are both finitely bounded, then so is $\struct{A}_1\astlarge \struct{A}_2$.
\end{proposition}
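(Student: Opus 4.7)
The plan is to apply Fraïssé's theorem to an appropriately defined class of finite $(\tau_1\cup\tau_2)$-structures. Let $\mathcal{K}$ consist of all finite $(\tau_1\cup \tau_2)$-structures $\struct{C}$ such that $\struct{C}|^{\tau_i}$ embeds into $\struct{A}_i$ for each $i\in\{1,2\}$. The class $\mathcal{K}$ is clearly closed under isomorphism and substructures, contains arbitrarily large members, and has countably many isomorphism types of each finite size since $\tau_1\cup\tau_2$ is finite. Joint embedding follows from amalgamation over the empty structure, so the crux is to establish the amalgamation property; to obtain uniqueness and the desired description of the age, strong amalgamation is the right notion.

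The key step is the following. Recall that a countable homogeneous structure has no algebraicity if and only if its age is a \emph{strong} amalgamation class, that is, amalgamation can always be realized so that the underlying set of the amalgam is the disjoint union of the two factors over their common substructure; this is a standard equivalence that uses precisely our hypothesis on $\struct{A}_1$ and $\struct{A}_2$. Given $\struct{D}\subseteq \struct{C}_1,\struct{C}_2\in \mathcal{K}$, one now amalgamates signature by signature: strong amalgamation inside $\age(\struct{A}_1)$ produces a $\tau_1$-structure $\struct{E}^{(1)}$ with domain $E\coloneqq C_1\cup_D C_2$ embeddable into $\struct{A}_1$, and strong amalgamation inside $\age(\struct{A}_2)$ produces a $\tau_2$-structure $\struct{E}^{(2)}$ on the \emph{same} domain $E$ embeddable into $\struct{A}_2$. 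Combining them into the $(\tau_1\cup\tau_2)$-structure $\struct{E}$ on $E$, both reducts embed into the corresponding $\struct{A}_i$, so $\struct{E}\in\mathcal{K}$ witnesses strong amalgamation in $\mathcal{K}$.

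By Fraïssé's theorem, there exists a unique countable homogeneous $(\tau_1\cup\tau_2)$-structure $\struct{A}_1\astlarge \struct{A}_2$ whose age equals $\mathcal{K}$; unfolding the definition of $\mathcal{K}$ gives exactly the embeddability characterization in the statement. For the moreover part, suppose $\struct{A}_i$ is finitely bounded by a finite set $\mathcal{N}_i$ of $\tau_i$-bounds. Let $\mathcal{N}$ be the (finite, since $\tau_1\cup\tau_2$ is finite and each $\mathcal{N}_i$ contains structures of bounded size) set of all $(\tau_1\cup\tau_2)$-expansions of members of $\mathcal{N}_1\cup \mathcal{N}_2$ by arbitrary relations in the complementary signature. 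A finite $(\tau_1\cup\tau_2)$-structure embeds into $\struct{A}_1\astlarge \struct{A}_2$ iff it lies in $\mathcal{K}$ iff neither of its reducts embeds a member of $\mathcal{N}_1$ or $\mathcal{N}_2$ iff it embeds no member of $\mathcal{N}$; hence $\struct{A}_1\astlarge \struct{A}_2$ is finitely bounded.

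The main obstacle is the strong amalgamation step: one must be careful that the two signature-wise strong amalgamations can be aligned on the common domain $C_1\cup_D C_2$ without conflict, which is exactly why strong (rather than merely unrestricted) amalgamation—and hence the no-algebraicity hypothesis—is indispensable. Everything else is a routine verification of Fraïssé's conditions and bookkeeping of bounds.
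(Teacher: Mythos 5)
Your proof is correct and is essentially the standard folklore argument that the paper itself does not spell out but delegates to the cited reference (Bodirsky's book, Section~2.3.6): one forms the class of finite $(\tau_1\cup\tau_2)$-structures whose reducts lie in the respective ages, uses the equivalence of no algebraicity with strong amalgamation to amalgamate signature by signature on the common domain, and applies Fra\"iss\'e's theorem, with finite boundedness obtained by expanding the given bounds in all possible ways. Your identification of strong (rather than plain) amalgamation as the point where the no-algebraicity hypothesis enters is exactly right.
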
 

Finally, we prove Theorem~\ref{thm:main_theorem_sandwiches}. 
%
On the way, we also explain which part of the proof needs to be adjusted in order to obtain a proof of Theorem~\ref{thm:main_theorem_sandwiches_basic}.

\begin{proof}[Proof of Theorem~\ref{thm:main_theorem_sandwiches}]
Let $\struct{A}$ be a non-contractible reduct of a finitely bounded homogeneous structure $\struct{B}$. 
    
   First, we move to the structures without algebraicity $\blowup{\struct{A}}$ and $\ordblowup{\struct{B}}$ from Theorem~\ref{thm:removing_algebraicity}; recall that $\struct{\blowup{\struct{A}}}$ pp-constructs $\struct{K}_3$ if and only if $\struct{A}$ does and that $\CSP(\struct{A})$ and $\CSP\blowup{\struct{A}}$ are Datalog-interreducible.
    Next, we take the generic superposition of $\smash{\ordblowup{\struct{B}}}$ with $(\mathbb{Q};<)$, which exists by Proposition~\ref{prop:generic_superpositions}.
    We set \[\smash{\widehat{\struct{B}}\coloneqq  \ordblowup{\struct{B}} \astlarge (\mathbb{Q};<)}\] and define $\widehat{\struct{A}}$ as the reduct of $\widehat{\struct{B}}$ to the signature of $\blowup{\struct{A}}$. 
    For Theorem~\ref{thm:main_theorem_sandwiches_basic}, we would instead keep $\widehat{\struct{B}}\coloneqq \struct{B}$.  
    
     By Proposition~\ref{prop:generic_superpositions}, we have that $\widehat{\struct{A}}$ and $\blowup{\struct{A}}$ are isomorphic and $\widehat{\struct{B}}$ is finitely bounded homogeneous.
    But now $\widehat{\struct{A}}$ and $\widehat{\struct{B}}$ satisfy the prerequisites of Proposition~\ref{prop:sandwiches}.
    Let $\mathcal{J}$ be the composition of the Datalog reduction $\mathcal{I}$ from $\struct{A}$ to $\blowup{\struct{A}}\cong \widehat{\struct{A}}$ from Theorem~\ref{thm:removing_algebraicity} and the pp-interpretation of $\fpwr{\widehat{\struct{A}}}{d}$ from $\widehat{\struct{A}}$ and let $\mathcal{J}'$ be the composition of the pp-interpretation and Datalog reduction running in the opposite direction.
    The fact that $\mathcal{J}$ and $\mathcal{J}'$ are Datalog-reductions from $\CSP(\struct{A})$ to $\CSP(\struct{A}')$ and vice versa follows directly from Theorem~\ref{thm:removing_algebraicity} and Proposition~\ref{prop:basic_properties_full_powers}\eqref{item:fpwrppconstr}. 
    
    We select a finite substructure $\struct{S}$ of $\struct{A}$ with $|S|\geq 3$ large enough so that every finite structure which has a homomorphism to a substructure of $\struct{A}$ of size $\leq n$ also has a homomorphism to $\struct{S}$ (this can be done since $\struct{A}$ has finite signature). Now let $\widehat{\struct{S}}$ be the substructure of $\widehat{\struct{A}}$ induced by $\{0\}\times S$.
    We claim that, for $d$ chosen as in Proposition~\ref{prop:sandwiches}, the structures
    \begin{itemize}
        \item $\struct{A}'\coloneqq \fpwr{\widehat{\struct{A}}}{d}$,
        \item $\struct{B}'\coloneqq \fpwr{\widehat{\struct{B}}}{d}$,
        \item $\struct{S}_1\coloneqq  \fpwr{\widehat{\struct{S}}}{d}$, and
        \item $ \struct{S}_2\coloneqq \orbeq{\fpwr{\widehat{\struct{A}}}{d}}{\Aut(\widehat{\struct{B}})}$
    \end{itemize} 
    witness items~\ref{item:sandwichthm5}--\ref{item:sandwichthm3} of the theorem. 

    Item~\ref{item:sandwichthm5} follows immediately from the definition of full powers.

    For item~\ref{item:sandwichthm6}, note that if $\struct{X}$ maps homomorphically to a substructure of $\struct{A}$ of size $\leq n$, then $\struct{X}\rightarrow \struct{S}$.
    Since the Datalog reduction $\mathcal{I}$ from Theorem~\ref{thm:removing_algebraicity} is just the expansion of a structure by empty relations $\neq$ and $E$, we have $\mathcal{I}(\struct{X})\rightarrow \widehat{\struct{S}}$.
    Since every homomorphism between two structures clearly induces a homomorphism between their corresponding full powers, we have $\mathcal{J}(\struct{X})=\fpwr{\mathcal{I}(\struct{X})}{d}\rightarrow  \fpwr{\widehat{\struct{S}}}{d}= \struct{S}_1$.
    On the other hand, if $\struct{X}\nrightarrow \struct{A}$, then $\mathcal{I}(\struct{X})\nrightarrow \widehat{\struct{A}}$ by Theorem~\ref{thm:removing_algebraicity}.
    By Lemma~\ref{lemma:lifting}\eqref{item:liftpol}, if $\fpwr{\mathcal{I}(\struct{X})}{d}\rightarrow \struct{S}_2$, then also $\mathcal{I}(\struct{X})\rightarrow\widehat{\struct{A}}$, hence $\mathcal{J}(\struct{X})=\fpwr{\mathcal{I}(\struct{X})}{d}\nrightarrow \struct{S}_2$. 
    
    Item~\ref{item:sandwichthm4} follows Proposition~\ref{prop:sandwiches}.
    We remark that, if we have kept  $\widehat{\struct{B}}\coloneqq \struct{B}$ for the purpose of proving Theorem~\ref{thm:main_theorem_sandwiches_basic}, then we instead get that $\struct{A}$ and $\struct{A}'$ pp-interpret each other due to Proposition~\ref{prop:basic_properties_full_powers}\eqref{item:fpwrppconstr}.
    
    Item~\ref{item:sandwichthm1} follows from Theorem~\ref{thm:removing_algebraicity}\eqref{AppconstructK3} and Proposition~\ref{prop:basic_properties_full_powers}\eqref{item:fpwrppconstr}. 
    
    Item~\ref{item:sandwichthm2} follows from Theorem~\ref{thm:removing_algebraicity}\eqref{Amodelcomplcore} and Proposition~\ref{prop:basic_properties_full_powers}\eqref{item:fpwrmodcompl}.

    Item~\ref{item:sandwichthm3} follows from Theorem~\ref{thm:removing_algebraicity}\eqref{item:BhomogRamsey}, Proposition~\ref{prop:basic_properties_full_powers}\eqref{item:fpwrramsey}, and~\cite[Thm.~1.5]{bodirsky2014new}. 
\end{proof}

\subsection{Proof of Theorem~\ref{thm:temporal_situation}} 
The main object of our focus is $(\mathbb{Q};X)$, where
\[X=\{(x,y,z) \in \mathbb{Q}^3 \mid x=y<z \vee y=z<x \vee z=x<y\}.\]
In~\cite[Sect.~4.2]{bodirsky2022descriptive}, it was shown that $\CSP(\mathbb{Q};X)$ is inexpressible in FPC.
 The idea is that $\CSP(\mathbb{Q};X)$ can be reformulated as a decision problem for systems of mod-2 equations:

\medskip    
\setlength{\tabcolsep}{0pt}    
\hspace{1em}\begin{tabular}{ll}  
  INSTANCE: & \,  A finite homogeneous system $S$ of mod-2 equations of length 3. \\ 
      QUESTION: & \, Does each non-empty subset of $S$ have a solution other than $\bar{0}$?  
\end{tabular}   

\medskip
The above reformulation was obtained in~\cite[Prop.~4.12]{bodirsky2022descriptive} as a direct consequence of the correctness of the polynomial-time algorithm for~$\CSP(\mathbb{Q};X)$ from~\cite{bodirsky2022descriptive} (see Figure~\ref{algo:QX}). 
The algorithm runs a loop where, in each step, it eliminates the potentially minimal elements of the remaining variables (those that are assigned $1$ after interpreting the relational constraints as mod-2 equations) and inductively builds a satisfying assignment.

\begin{figure}[ht!]
\renewcommand{\algorithmiccomment}[1]{// #1} 
  \hrule 
  \smallskip
    \begin{algorithmic} 
\STATE{INPUT: An instance $\struct{A}$ of $\CSP(\mathbb{Q};X)$}
\STATE{OUTPUT: \texttt{true} or \texttt{false}}
\IF{$A=\emptyset$}
\RETURN  \texttt{true} 
\ENDIF 
\FOR{$x \in A$}    
\STATE{\COMMENT{interpret each $(x_i,x_j,x_k)\in X^{\struct{A}}$  as $x_i+x_j+x_k=0$}}
\IF{$\struct{A}\cup \{x=1\}$ mod 2 has a solution $s$}
\RETURN divide-and-conquer$(\struct{A}[s^{-1}(0)])$
\ENDIF
\ENDFOR 
\RETURN \texttt{false} 
\end{algorithmic} 
 \smallskip
  \hrule
 \caption{A divide-and-conquer algorithm for $\CSP(\mathbb{Q};X)$}\label{algo:QX}
\end{figure}

A similar reformulation can be obtained for other relations pp-definable in $(\mathbb{Q};X)$, e.g., equations $x_1+\cdots + x_i=0$ in the case of the following higher-ary generalizations:
\[
X_i = \{(x_1,\dots, x_i) \in \mathbb{Q}^i \mid \ |\{j\in [i] \mid x_j=\min(x_1,\dots, x_i)\}|=0 \bmod 2  
\}.
\]
 
It is well-known that the class of all satisfiable systems of mod-2 equations cannot be defined in FPC~\cite{atserias2009affine}. 
However, the fact that $\CSP(\mathbb{Q};X)$ is not solvable in FPC does not immediately follow from the aforementioned result about mod-2 equations.
Instead, one needs to specifically prove the inexpressibility in FPC for overdetermined systems of equations, i.e., those whose homogeneous companion only has the trivial solution $\bar{0}$. 
Such systems are then only one step away from fixed-image  fooling instances of $\CSP(\mathbb{Q};X)$ for FPC.
The following auxiliary result was obtained in~\cite{bodirsky2022descriptive}. 

 \begin{lemma}[Proof of Theorem~4.23 in~\cite{bodirsky2022descriptive}] \label{lemma:fooling}
    For every $k\in \mathbb{N}$, there exist structures $\struct{A}_k,\struct{B}_k$ in the signature consisting of symbols $R_i$ with arities $i$ for $i\in \{2,3,4,5\}$ with $\struct{A}_k \equiv_{k} \struct{B}_k$ and such that the following holds if $R_i(x_1,\dots, x_i)$ is interpreted as the equation $x_1+\cdots+x_i = 0 \bmod 2$:
    \begin{enumerate}
        \item \label{item:foolingold1} $\struct{A}_k$ has a solution mapping in each equation at least one variable to $1$;
        \item \label{item:foolingold2} $\struct{B}_k$ only has the trivial solution $\bar{0}$.  
    \end{enumerate}
 \end{lemma}
Since $\CSP(\mathbb{Q};X_2,X_3,X_4,X_5)$ admits fixed-image fooling instances for FPC and is pp-definable from $(\mathbb{Q};X)$~\cite{bodirsky2022descriptive}, it follows from Theorem~\ref{thm:main_theorem_sandwiches_basic} that $(\mathbb{Q};X)$ pp-constructs a finite-domain PCSP inexpressible in FPC.
This was essentially already shown in~\cite[Prop.~38]{Mottet_2025}, except that the author did not comment on how to push the fixed-image fooling instances from $\CSP(\mathbb{Q};X_2,X_3,X_4,X_5)$ to $\CSP(\mathbb{Q};X)$ or the fact that the right-hand side of the (half infinite) PCSP can be chosen finite if one additionally applies the full-power construction. 
We now give a full proof of Theorem~\ref{thm:temporal_situation}.

\begin{proof}[Proof of Theorem~\ref{thm:temporal_situation}] Let $\struct{B}$ be a first-order reduct of $(\mathbb{Q};<)$. By~\cite[Thm.~1.3]{bodirsky2022descriptive}, we have two cases.
If $\struct{B}$ is expressible in FPC, then it cannot pp-construct any finite-domain PCSP inexpressible in FPC, because pp-constructions between (P)CSP-templates give Datalog-reductions between the (P)CSPs.
Otherwise, $\struct{B}$ pp-constructs $\struct{K}_3$ or $(\mathbb{Q};X)$.
In the former case,  the statement follows immediately because $\CSP(\struct{K}_3)$ is inexpressible in FPC.
In the latter case, recall that $(\mathbb{Q};X)$ pp-defines $(\mathbb{Q};X_2,X_3,X_4,X_5)$.
Then the statement follows from the fact that $\CSP(\mathbb{Q};X_2,X_3,X_4,X_5)$ admits fixed-image fooling instances together with Theorem~\ref{thm:main_theorem_sandwiches_basic} for $n$ chosen large enough. \end{proof}

\section{Case study: phylogeny CSPs} \label{section:phylo}

In the present section, we introduce phylogeny CSPs and give a proof of Theorem~\ref{thm:temporal_phylo_pp_constr}.

\subsection{Phylogeny CSPs}
Phylogeny problems are motivated by evolutionary biology.
Every species uniquely stems from a prior species, and all species have a common ancestor.
A natural question in the area is how to reconstruct the full phylogenetic tree using only observations about recent species, i.e., the leaves of the tree.
For the leaves $x, y, z$ of a rooted tree, we write $x|yz$ if the youngest common ancestor of $y$ and $z$ lies strictly below the youngest common ancestor of $x$, $y$ and $z$. 
The set of all leaves of a rooted tree together with this ternary relation is called the leaf structure of the tree. 

The basic \emph{phylogeny (decision) problem} asks whether, for a given set of constraints of the form
$x_i|x_jx_k$ over variables $V$, there exists a mapping $s$ from $V$ to the leaves of some rooted tree such that $s(x_i)|s(x_j)s(x_k)$ holds for every constraint $x_i|x_jx_k$.
It is known that this problem can be solved in quadratic time using a simple divide-and-conquer algorithm~\cite{bodirsky2017complexity}.
Matters get more difficult, however, when we allow Boolean combinations as constraints.
For example, consider the ternary relation $N$ defined by the formula $(x|yz \vee z|xy)$. 
The problem whether there exists a rooted tree whose leaves satisfy a given set of constraints of
the form $N(x_i,x_j,x_k)$ is NP-hard~\cite{bodirsky2017complexity}. The class of all phylogeny problems is obtained by considering all sets of relations which are specified by Boolean combinations of formulas of the form $(x|yz)$ or $(x = y)$.

 The key to a successful complexity classification for phylogeny problems was the observation that they can be viewed as CSPs of $\omega$-categorical structures; in fact, they fall within the scope of Conjecture~\ref{conj:bodirsky_pinsker}.
 Given a finite %
 rooted tree $\struct{T}$ (i.e.\;a directed graph without loops that has at most one vertex without incoming edges (the root) and whose undirected graph reduct is acyclic), we denote by $L(\struct{T})$ the set of all its leaves (vertices without outgoing edges); for our purposes, we may assume that all trees have branching factor two.
 On $L(\struct{T})$, we then consider the ternary relation $x|yz$ from above and call $(L(\struct{T});|\,)$ the \emph{leaf structure} of $\struct{T}$.
 It is well-known that $\struct{T}$ is uniquely determined by its leaf structure 
 $(L(\struct{T});|\,)$ up to isomorphism~\cite[Thm.~3]{steel1992complexity}.
 The class of the leaf structures of all finite rooted trees forms a Fra\"i{ss}\'{e} class~\cite[Prop.~7]{bodirsky2016reductsofc}, and hence there exists an up to isomorphism unique homogeneous structure $\struct{L}$ whose age consists of all possible leaf structures.\footnote{In literature on model theory, this structure is commonly known as the homogeneous $C$-relation.} 
 One can moreover show that $\struct{L}$ is finitely bounded~\cite[Sect.~5.1.2]{bodirsky2021complexity}:
 \begin{align*}
     \forall x,y,z,u \big( \neg (x|yz \wedge y|xz)  
     \wedge (x|yz \Rightarrow x|zy) \wedge  (x|yz \Rightarrow x|uz \vee u|yz) & \\ {} \wedge (x\neq y \Rightarrow x|yy) \wedge 
     (x|yz \vee xy|z \vee xz|y \vee x=y=z) & 
     \big)
 \end{align*} 
 We denote the domain of $\struct{L}$ by $\mathbb{L}$, and write $\struct{L}=(\mathbb{L};|\,)$.
 
 The basic phylogeny problem can equivalently be phrased as $\CSP(\mathbb{L};|\,)$, and arbitrary phylogeny problems correspond to CSPs of first-order reducts of $(\mathbb{L};|\,)$. 
 It is useful to view $\mathbb{L}$ as the set of all $\{0,1\}$-sequences with only finitely many non-zero entries and where $xy|z$ holds if and only if there exists $n\in \mathbb{N}$ such that the first index where $x$ and $y$ differ is $n$ and the first index where $x$ or $y$ differ from $z$ is less than $n$.
 In the following, we are interested in the specific case of the four-ary relation $Y$ defined by the formula 
 \begin{align}
    Y\coloneqq(xy|wz \vee xw|yz \vee xz|yw). \label{eq:even_split}
 \end{align} 
 Here, the notation $L_1|L_2$ for sets of leaves $L_1,L_2$ naturally generalizes the original ternary relation $|$.
 Intuitively, it states that the youngest common ancestors of $L_1$ and $L_2$ are incomparable.
 Formally,  $L_1|L_2$ if and only if $x|yz$ for all $x\in L_i$ and $y,z\in L_{3-i}$ ($i\in \{1,2\})$.
 Note that, for every finite $F\subseteq \mathbb{L}$ with $|F|\geq 2$, there exist unique non-empty subsets  $F_0,F_1\subseteq F$ with $F=F_0\cup F_1$ and $F_0|F_1$; these subsets represent the leaves of the left and the right branches of the rooted binary tree encoded by $\struct{L}[F]$.
 We call $\{F_0,F_1\}$ the \emph{split-partition} of $F$. 
 Since $(\mathbb{L};|)$ and $(\mathbb{L};Y)$ are first-order interdefinable using quantifier-free formulas and $(\mathbb{L};|)$ is homogeneous, it follows that $(\mathbb{L};Y)$ is homogeneous as well.

A \emph{split} of a tuple $(t_1,\dots, t_n)$ over $\mathbb{L}$ is any $\{0,1\}$-tuple $(s_1,\dots, s_n)$ with the property that, for all $i,j,k\in [n]$, we have $t_i|t_jt_k$ whenever $s_i \neq s_j=s_k$~\cite{bodirsky2017complexity} (note that besides the constant tuples there is at most one split up to swapping 0's and 1's).
Note that the relation $Y$ consists of all $4$-tuples over $\mathbb{L}$ with a non-constant even split in $\struct{L}$, i.e., the split contains a non-zero even number of $0$s and $1$s.
 The formula in eq.~\eqref{eq:even_split} and more generally all formulas defining sets of tuples with only even splits fall within the \emph{affine Horn} fragment of phylogeny languages as defined in~\cite{bodirsky2017complexity}.
 By the results obtained therein, the CSPs of all affine Horn phylogeny languages, and in particular $\CSP(\mathbb{L};Y)$, are solvable in polynomial time, using an algorithm based on Gaussian elimination and local consistency checking. 
 
 What we show here is that there exists a relation $Y'$ pp-definable in $(\mathbb{L};Y)$ such that $\CSP(\mathbb{L};Y,Y')$ is inexpressible in FPC and that this is witnessed by fixed-image fooling instances.
 We then use this fact to show the existence of a finite-domain PCSP template $(\struct{B}_1,\struct{B}_2)$ pp-constructible from $(\mathbb{L};Y)$ such that $\PCSP(\struct{B}_1,\struct{B}_2)$ is inexpressible in FPC.
 Finally, we prove that there is no finite-domain CSP template with these properties.
 
 \subsection{A tractable phylogeny CSP not in FPC} \label{sec:phylo_not_in_FPC}
 We show that $\CSP(\mathbb{L};Y)$ is inexpressible in FPC.
 Our proof of FPC-inexpressibility %
 builds on the proof of FPC-inexpressibility for $\CSP(\mathbb{Q};X)$ from~\cite[Sect.~4.2]{bodirsky2022descriptive}.
A detailed inspection of the polynomial-time algorithm for $\CSP(\mathbb{L};Y)$ from~\cite[Figure~3]{bodirsky2017complexity} (rephrased in Figure~\ref{algo:LY} below) reveals that $\CSP(\mathbb{L};Y)$ admits a similar reformulation as $\CSP(\mathbb{Q};X)$: 

\medskip    
\setlength{\tabcolsep}{0pt}    
\hspace{1em}\begin{tabular}{ll}  
  INSTANCE: & \,  A finite homogeneous system $S$ of mod-2 equations of length 4. \\ 
      QUESTION: & \, Does each non-empty subset of $S$ have a non-constant solution?  
\end{tabular}    

\medskip    
Instead of eliminating minimal elements in each step as in the algorithm in Figure~\ref{algo:QX}, we divide the instance into the ``left half" and ``right half" of a corresponding solution tree.
We can easily extend Lemma~\ref{lemma:fooling} to its analogue tailored to this problem.

\begin{figure}[ht!]
\renewcommand{\algorithmiccomment}[1]{// #1}
  \hrule 
\smallskip 
    \begin{algorithmic} 
\STATE{INPUT: An instance $\struct{A}$ of $\CSP(\mathbb{L};Y)$}
\STATE{OUTPUT: \texttt{true} or \texttt{false}}
\IF{$A=\emptyset$}
\RETURN  \texttt{true} 
\ENDIF 
\FOR{$x_0,x_1 \in A$}    
\STATE{\COMMENT{interpret each $(x_i,x_j,x_k,x_{\ell})\in Y^{\struct{A}}$  as $x_i+x_j+x_k+x_{\ell}=0$}}
\IF{$\struct{A}\cup \{x_0=0,x_1=1\}$ mod 2 has a solution $s$}
\RETURN  $\text{divide-and-conquer}(\struct{A}[s^{-1}(0)])\wedge \text{divide-and-conquer}(\struct{A}[s^{-1}(1)])$  
\ENDIF
\ENDFOR 
\RETURN \texttt{false} 
\end{algorithmic} 
\smallskip 
  \hrule  
    \caption{A divide-and-conquer algorithm for $\CSP(\mathbb{L};Y)$}\label{algo:LY}
\end{figure}

 \begin{lemma} \label{lemma:fooling2}
    For every $k\in \mathbb{N}$, there exist structures $\struct{A}_k,\struct{B}_k$ in the signature consisting of symbols $R_i$ with arities $i$ for $i\in \{4,6\}$ with $\struct{A}_k \equiv_{k} \struct{B}_k$ and such that the following holds if $R_i(x_1,\dots, x_i)$ is interpreted as the equation $x_1+\cdots+x_i = 0 \bmod 2$:
    \begin{enumerate}
        \item \label{item:fooling1} $\struct{A}_k$ has a solution taking both values in each equation;
        \item \label{item:fooling2} $\struct{B}_k$ only has a constant solution. 
    \end{enumerate} 
 \end{lemma}
 \begin{proof}
 Let $\struct{A}'_k,\struct{B}'_k$ be the structures from Lemma~\ref{lemma:fooling} (formerly $\struct{A}_k,\struct{B}_k$).
 We obtain new structures $\struct{A}_k,\struct{B}_k$ by extending their domain by a fresh element $e$ and specifying the relations as follows:
 
\smallskip
\begin{itemize}
    \item $(x_1,x_2,e,e)\in R_4^{\struct{A}_k}$ ($R_4^{\struct{B}_k}$) if $(x_1,x_2)\in R_2^{\struct{A}'_k}$ ($R_2^{\struct{B}'_k}$); 
    \item $(x_1,x_2,x_3,e)\in R_4^{\struct{A}_k}$ ($R_4^{\struct{B}_k}$) if $(x_1,x_2,x_3)\in R_3^{\struct{A}'_k}$ ($R_3^{\struct{B}'_k}$);
    \item $(x_1,x_2,x_3,x_4,e,e)\in R_6^{\struct{A}_k}$ ($R_6^{\struct{B}_k}$) if $(x_1,x_2,x_3,x_4)\in R_4^{\struct{A}'_k}$ ($R_4^{\struct{B}'_k}$);
    \item $(x_1,x_2,x_3,x_4,x_5,e)\in R_6^{\struct{A}_k}$ ($R_6^{\struct{B}_k}$) if $(x_1,x_2,x_3,x_4,x_5)\in R_5^{\struct{A}'_k}$ ($R_5^{\struct{B}'_k}$).
\end{itemize}
\smallskip

It is easy to see that $\struct{A}_k \equiv_{k} \struct{B}_k$ holds given that $\struct{A}'_k \equiv_{k} \struct{B}'_k$; the Duplicator may simply extend the original bijections by $e \mapsto e$ in order to obtain a winning strategy.

For~\eqref{item:fooling1}, let $s$ be an assignment witnessing Lemma~\ref{lemma:fooling}\eqref{item:foolingold1}.
Then extending $s$ by $e\mapsto 0$ witnesses the condition in item~\ref{item:fooling1}.

For~\eqref{item:fooling2}, suppose that there is a non-constant solution $s$ to $\struct{B}_k$.
Since the equations in $\struct{B}_k$ are of even length, also $\bar{1}-s$ is a solution to $\struct{B}_k$; hence, without loss of generality, we may assume that $s(e)=0$.
But then the restriction of $s$ to $B'_k$ provides a non-trivial solution to $\struct{B}_k$, contradicting Lemma~\ref{lemma:fooling}\eqref{item:foolingold2}. Hence, item~\eqref{item:fooling2} holds as well.
\end{proof}
Consider the $6$-ary relation $Y'$ defined in $(\mathbb{L};Y)$ by the pp-formula
\[
\exists h \big( Y(x_1,x_2,x_3,h) \wedge Y(h,x_4,x_5,x_6)\big).
\]
 
\begin{theorem} \label{thm:phylo}
    $\CSP(\mathbb{L};Y,Y')$ is not definable in FPC. Moreover, this is witnessed by fixed-image fooling instances.  
\end{theorem} 
\begin{proof} 
We claim that $Y$ and $Y'$ consist of all tuples over $L$ of arity $4$ and $6$, respectively, which have a non-constant even split in $\struct{L}$.
In the case of $Y$, this follows directly from the definition.
For $Y'$, we show that $ (s_1,\dots,s_6)\in Y'$ if and only if there exists a permutation $\pi$ such that $s_{\pi(1)}s_{\pi(2)}| s_{\pi(3)}s_{\pi(4)}s_{\pi(5)}s_{\pi(6)}.$

``$\Rightarrow$'' We prove this direction by contradiction.
First, suppose that $s_1|s_2\cdots s_6$ holds. 
Then there is some $h \in \mathbb{L}$ such that $Y(s_1, s_2, s_3, h)$ and $Y(h, s_4, s_5, s_6)$ hold in $\mathbb{L}$. 
    Now $s_1h|s_2s_3$ must hold, and hence also $s_1h|s_2s_3s_4s_5s_6$.
    But then we get $h|s_4s_5s_6$, a contradiction to the definition of $Y$. 
    Secondly, suppose that $s_1s_2s_3|s_4s_5s_6$ holds; without loss of generality, we have $s_1s_2|s_3h$ and $hs_4|s_5s_6$. 
    But this implies $s_1s_2s_3h|s_4s_5s_6$ as well as $s_1s_2s_3|hs_4s_5s_6$, which contradict each other. Finally, assume that $s_1s_2s_4|s_3s_5s_6$. It follows that $s_1s_2|s_3h$; thus, $s_4|hs_5s_6$, which is a contradiction as well. 
    The above case distinction is exhaustive since $Y$ is symmetric. 

``$\Leftarrow$'' By the symmetry of $Y$, there are only two cases to consider.
First, suppose that $s_1s_2|s_3s_4s_5s_6$. Without loss of generality, we have $s_4|s_5s_6$. Due to the homogeneity of $(L; |\,)$, there exists some $h \in \mathbb{L}$ such that $hs_4|s_5s_6$. 
    Clearly, this implies $s_1s_2|s_3h$. Secondly, consider the case that $s_1s_4|s_2s_3s_5s_6$. The homogeneity of $(\mathbb{L}; |\,)$ provides some $h\in \mathbb{L}$ such that $s_1s_4h|s_2s_3s_5s_6$. Hence, $s_1h|s_2s_3$ and $s_4h|s_5s_6$.

This concludes the proof of our claim; given this characterization of $Y$ and $Y'$ via non-constant even splits, we can now prove the theorem.

For every $k\in \mathbb{N}$, let $\struct{A}_k,\struct{B}_k$ be the structures from Lemma~\ref{lemma:fooling2}.
Then $\struct{A}_k \equiv_{k} \struct{B}_k$.
By Lemma~\ref{lemma:fooling2}\eqref{item:fooling2}, $\struct{B}_k$ does not have any non-constant solution when viewed as a system of mod 2 equations.
Since the algorithm in Figure~\ref{algo:LY} is correct for $\CSP(\mathbb{L};Y,Y')$ (after transforming a $\CSP(\mathbb{L};Y,Y')$-instance into a $\CSP(\mathbb{L};Y)$-instance according to the pp-definition of $Y'$), we have that $\struct{B}_k \nrightarrow (\mathbb{L};Y,Y')$.
On the other hand, by Lemma~\ref{lemma:fooling2}\eqref{item:fooling1}, there exists a solution $s$ such that both $\struct{A}_k[s^{-1}(0)])$ and $\struct{A}_k[s^{-1}(1)]$ contain no equations.
Again, it follows immediately from the correctness of the algorithm in Figure~\ref{algo:LY} that $\struct{A}_k \rightarrow (\mathbb{L};Y,Y')$.
On the other hand, a homomorphism $h\colon \struct{A}'_k \rightarrow (\mathbb{L};Y,Y')$ can also be obtained explicitly by selecting arbitrary $a,b\in \mathbb{L}$ with $a\neq b$ and setting $h(x)\coloneqq a$ if $s(x)=0$ and $b$ otherwise.
Note that the cardinality of the image of $h$ is bounded by two.
The statement of the theorem for $\CSP(\mathbb{L};Y,Y')$ now follows from Theorem~\ref{thm:pebble_games}.  
\end{proof}

\subsection{Transfer of logical inexpressibility results}\label{section:inexpressibilitytransfer}
Theorem~\ref{thm:phylo} combined with Theorem~\ref{thm:main_theorem_sandwiches_basic} shows that, similarly to $(\mathbb{Q};X)$, the phylogeny template $(\mathbb{L};Y)$ pp-constructs a finite-domain PCSP that is not expressible in FPC.
Note that, since pp-constructions provide Datalog-reductions between (P)CSPs, we can actually interpret this as the reason why $(\mathbb{Q};X)$ and $(\mathbb{L};Y)$ are not expressible in FPC. 
The inexpressibility of finite-domain CSPs in FPC is characterized by the ability to pp-construct a non-trivial finite Abelian group~\cite[Thm.~1.1]{bodirsky2022descriptive}; here, a finite Abelian group $(G;+,0)$ is typically represented by a relational structure with domain $G$ and relations
\[
\{
(x_1,\dots,x_i)\in G^i \mid x_1+\cdots+ x_i = a
\}
\]
for all $i\in [3]$ and $a\in G$.

The above correspondence does not extend to the CSPs in the scope of Conjecture~\ref{conj:bodirsky_pinsker}~\cite[Thm.~1.5]{bodirsky2022descriptive}.
In fact, it already fails within the first-order reducts of $(\mathbb{Q};<)$, witnessed by the structure $(\mathbb{Q};X)$.
By~\cite[Prop.~7.4]{bodirsky2022descriptive}, the polymorphism clone of $(\mathbb{Q};X)$ satisfies a certain height-1 condition called \emph{$(3+3)$-terms}, originally introduced by Ol\v{s}\'{a}k~\cite{olvsak2021maltsev}. 
Operations $f,g,$ and $h$ of arities 6, 3, and 3, respectively, are called \emph{$(3+3)$-terms}~\footnote{In our work, we do not require $(3+3)$-terms to be \emph{idempotent}, which was the case in~\cite{olvsak2021maltsev}.} if they satisfy
\begin{align*}
f(y,x,x,x,x,x) &\approx g(y,x,x), \\
f(x,y,x,x,x,x) &\approx g(x,y,x), \\
f(x,x,y,x,x,x) &\approx g(x,x,y), \\
f(x,x,x,y,x,x) &\approx h(y,x,x), \\
f(x,x,x,x,y,x) &\approx h(x,y,x), \\
f(x,x,x,x,x,y) &\approx h(x,x,y).
\end{align*}
It is easy to show that the polymorphism clone of a non-trivial finite Abelian group cannot have $(3+3)$-terms; see the paragraph below Theorem~1.7 in~\cite{olvsak2021maltsev}.
Since $\Pol(\mathbb{Q};X)$ contains $(3+3)$-terms and pp-constructions preserve satisfiability of height-1 identities, we get that $(\mathbb{Q};X)$ cannot pp-construct any non-trivial finite Abelian group~\cite[Cor.~7.5]{bodirsky2022descriptive}. 

We will show that the above described phenomenon is not isolated, and that $(\mathbb{L};Y)$ is another template within the scope of Conjecture~\ref{conj:bodirsky_pinsker} whose CSP is not definable in FPC and yet it does not pp-construct any non-trivial finite Abelian group.
\begin{theorem} \label{thm:spiesstheorem}
    The polymorphism clone of $(\mathbb{L};Y)$ contains $(3+3)$-terms.
\end{theorem}
\begin{proof}
Let $(S_i)_{i\in\mathbb{N}}$ be a chain of finite subsets of $\mathbb{L}$ with $\bigcup_{i\in\mathbb{N}} S_i=\mathbb{L}$, i.e., so that $S_i \subseteq S_{i+1}$ for every $i\in \mathbb{N}$. 
 \begin{claim} \label{claim:3+3}
     For every $i\in \mathbb{N}$, there exist polymorphisms $f_i$, $g_i$, $h_i$ from $(S_i;Y\cap S^4_i)$ to $ (\mathbb{L};Y)$ witnessing the $(3+3)$-terms condition.
 \end{claim}
 Fix $i\in \mathbb{N}$.
 We prove Claim~\ref{claim:3+3} by induction on $|S_i|$. We will in fact use a stronger induction hypothesis than the original statement, which is captured by Claim~\ref{3plus3termsforX0X1}. 
\begin{claim}\label{3plus3termsforX0X1} 
Let $\struct{F}$, $\struct{F}_0$, $\struct{F}_1$ be finite substructures of $\struct{L}$ such that $\{F_0,F_1\}$ is the split-partition of $F$. 
Suppose that, for both $i\in \{0,1\}$,  $\Pol((F_j;Y\cap F_j^4),(\mathbb{L};Y))$ contains injective $(3+3)$-terms $f_i,g_i,h_i$ with $g_i(F_i^3)\subseteq f_i(F_i^6)$ and $h_i(F_i^3)\subseteq f_i(F_i^6)$. 
Then $\Pol((F;Y\cap F^4),(\mathbb{L};Y))$ contains injective $(3+3)$-terms $f,g,h$ with $g(F^3)\subseteq f(F^6)$ and $h(F^3)\subseteq f(F^6)$. Moreover, there exist $\alpha_0,\alpha_1\in\Aut(\mathbb{L};Y)$ such that $f|_{F_i^6}=\alpha_i\circ f_i,g|_{F_i^3}=\alpha_i\circ g_i,$ and $h|_{F_i^3}=\alpha_i\circ h_i$.
\end{claim}
\begin{claimproof}  
We may assume that $f_0(F_0^6)|f_1(F_1^6)$.
 Indeed, otherwise, by the homogeneity of $\Aut(\mathbb{L};Y)$, there exists an automorphism $\alpha\in \Aut(\mathbb{L};Y)$ such that $f_0(F_0^6)|\alpha\circ f_1(F_1^6)$.
 In this case,  $f_1'\coloneqq \alpha\circ f_1$, $g_1'\coloneqq \alpha\circ g_1$ and $h_1'\coloneqq \alpha\circ h_1$ also satisfy the prerequisites of this lemma, and we can continue with these operations instead.
 
 We first define the $6$-ary operation $f$, show that it is a polymorphism, and then use it to define the ternary operations $g$ and $h$.
  Before we define $f$, we need one auxiliary definition. We define the \emph{code} of a tuple $\bar{x} =(x_1,\dots,x_k)\in (F_0\cup F_1)^k$ with respect to $F_0$ and $F_1$ as the unique $\{0,1\}$-tuple $\code{\bar{x}}=(\code{x_1},\dots,\code{x_k})$ with the property that 
$x_i\in F_{\code{x_i}}$ for every $i\in [k]$. 
Note that if $F_0|F_1$, then codes of tuples are essentially splits where the values $0$ and $1$ explicitly denote the left and the right branch (or vice versa). 
 Now, consider a rooted balanced binary tree of height 6; we will refer to its $2^6$ many leaves as \emph{bubbles}, and each bubble will be represented by a tuple $\bar{b}=(b_1,\dots,b_6)\in\{0,1\}^6$.
 We define $f$ by extending each bubble $\bar{b}$ by a binary tree and assigning the leaves in this tree to tuples $\bar{x}\in F^6$ in such a way that $\code{\bar{x}}=\bar{b}$.
 In other words, we will map tuples with the same code w.r.t.\ $F_0$ and $F_1$ to the same bubble.
 We make a particular choice of branching when constructing $\struct{T}$, specified by Table~\ref{tab:branching}.
  \begin{table}[H]
      \centering \renewcommand{\tabcolsep}{0.5em}
      \begin{tabular}{l|c|c}    
       \toprule    level & left half  &  right half  \\ \midrule 
          1st   & $b_1=b_2$ & $b_1\neq b_2$ \\
             2nd    & $b_2=b_3$ & $b_2\neq b_3$ \\
       3rd    & $b_4=b_5$ & $b_4\neq b_5$ \\
        4th     & $b_5=b_6$ & $b_5\neq b_6$ \\
         5th   & $b_3=b_4$ & $b_3\neq b_4$ \\
         6th    & $b_1=0$ & $b_1=1$ \\ \bottomrule
      \end{tabular}
      \bigskip 
      \caption{An assignment of leaves to bubbles. Here, the $0$- and the $1$-entries do not necessarily specify branching to the left and the right   (or vice versa).}
      \label{tab:branching}
  \end{table}
   We illustrate this choice in Figure~\ref{fig:definitiontreef}. 
\begin{center}
\begin{figure}[H]
\resizebox{\textwidth}{!}{
    \begin{forest}
for tree={
  grow'=south,
  parent anchor=south,
  child anchor=north,
  l=8mm,
  s=4mm,
  draw,
  inner sep=4pt,
  font=\tiny,
  line width=0.3mm
}
[root,draw=white
  [$b_1\neq b_2$
    [$b_2\neq b_3$
      [$b_4\neq b_5$[{$\vdots$},draw=white][{$\vdots$},draw=white]]
      [{$b_4=b_5$}
        [$b_5\neq b_6$
          [$b_3\neq b_4$
            [101001, circle]
            [010110, circle]
          ]
          [{$b_3=b_4$}
            [101110, circle]
            [010001, circle]
          ]
        ]
        [{$b_5=b_6$}
          [$b_3\neq b_4$
            [101000, circle]
            [010111, circle]
          ]
          [{$b_3=b_4$}
            [101111, circle]
            [010000, circle]
          ]
        ]
      ]
    ]
    [{$b_2=b_3$}[{$\vdots$},draw=white][{$\vdots$},draw=white]]
  ]
  [{$b_1=b_2$}
    [{$\vdots$},draw=white][{$\vdots$},draw=white]]
]
\end{forest}}
\centering
    \caption{An illustration of the definition-tree of $f$. The figure covers the subtree of all codes $(b_1,\dots,b_6)$ such that $b_1\neq b_2$, $b_2\neq b_3$, and $b_4=b_5$.}
    \label{fig:definitiontreef}
\end{figure}
\end{center}
Let us now define the subtrees extending the bubbles.
If $\bar{b}=(i,\dots,i)$ for some $i\in\{0,1\}$, then we extend $\bar{b}$ by a tree such that its leaf structure is isomorphic to $f_i(F_i^6)$. 
For all other bubbles, we want to have the image under $f$ \emph{lexicographically dominated}:
That means for all $\bar{x}=(x_1,\dots,x_{\ell}),\bar{y}=(y_1,\dots,y_{\ell}),\bar{z}=(z_1,\dots,z_{\ell})\in F^{\ell}$ such that $\code{\bar{x}}=\code{\bar{y}}=\code{\bar{z}}\notin \{(0,\dots,0),(1,\dots,1)\}$ the following holds: if there exists $k\in [\ell]$ such that $x_i=y_i=z_i$ for every $i<k$ while $x_k|y_kz_k$, then $f(\bar{x})|f(\bar{y})f(\bar{z})$.  
To this end, we extend the bubble $\bar{b}=(b_1,\dots,b_6)$ by a tree whose leaf structure is isomorphic to $\struct{F}_{b_1}$, then each of its leaves by a tree whose leaf structure is isomorphic to $\struct{F}_{b_2}$, etc.
This way, we get a tree that has $|F_{b_1}|\cdot \cdots \cdot |F_{b_6}|$ many leaves and, for the definition of $f$, we assign to each tuple $\bar{x}=(x_1,\dots,x_6)$ with $\code{\bar{x}}=\bar{b}$ the leaf that is reached by going to $x_1$ in the first level, then to $x_2$ in the subtree extending $x_1$, etc. It can be easily checked that an operation defined in this way is lexicographically dominated.
 
Now consider the leaf structure $(L(\struct{T});|) $ of this tree; it can be embedded into $\struct{L}=(\mathbb{L};|)$. Since $Y$ is defined 
 in $(\mathbb{L};|)$ using a quantifier-free formula, namely the formula in eq.~\eqref{eq:even_split}, there exists an embedding from $\struct{S} \coloneqq (L(\struct{T});Y\cap L(\struct{T})^4 )$ into $(\mathbb{L};Y)$.
 This embedding defines a function $f\colon F^6\to \mathbb{L}$. Clearly, for both $i\in\{0,1\}$, we have $f|_{F_i^6}=\alpha_i\circ f_i$ for some $\alpha_i \in \Aut(\mathbb{L};Y)$.  
Since $f_0$ and $f_1$ are injective, so is $f$.

\medskip\noindent{\itshape Claim: $f$ is a polymorphism.}

\medskip

Let  $\bar{x},\bar{y},\bar{z},\bar{w}\in F^6$ be arbitrary such that
$\struct{F} \models Y(x_i,y_i,z_i,w_i)$  for every $i\in [6]$.
        We have to show that 
         \begin{align}
      (f(\bar{x}),f(\bar{y}),f(\bar{z}),f(\bar{w}))\in Y^{\struct{L}}. \label{eq:claimf}
       \end{align}   
        
        First, suppose that  $\code{\bar{x}}=\code{\bar{y}}=\code{\bar{z}}=\code{\bar{w}}$.
        If the codes are constant, then~\eqref{eq:claimf}  follows from $f_0$ or $f_1$ being a polymorphism.
        Otherwise, we get~\eqref{eq:claimf} because the split relation $|$ in the image is determined by the projection $(x_k,y_k,z_k,w_k)$ to the first argument $k\in [6]$ where the tuples $\bar{x},\bar{y},\bar{z},\bar{w}$ differ.

        Second, suppose that $\code{\bar{x}},\code{\bar{y}},\code{\bar{z}},\code{\bar{w}}$ are not all equal.
        Then we consider the first branching rule that divides the tuples into the left and right half of the corresponding subtree (Table~\ref{tab:branching}).
        E.g, it could be the branching step dividing between $b_4=b_5$ and $b_4\neq b_5$ for $\code{\bar{x}}$ and $\code{\bar{y}}$, in which case $\code{x_4} =\code{x_5} $ and $\code{y_4} \neq \code{y_5} $.
        Without loss of generality, we assume that $\code{x_4} =0$ and $\code{y_4} =0$ ($0$ and $1$ are treated symmetrically).
        Since $F_0|F_1$ and the relation $Y$ is charaterized by non-constant even splits, we have that either $\code{z_4} =\code{z_5} $ and $\code{w_4} \neq \code{w_5} $ or $\code{z_4} \neq \code{z_5} $ and $\code{w_4} = \code{w_5} $.
        Hence, two of the codes will branch to the left and two to the right. 
        Therefore, we get~\eqref{eq:claimf}.
        All other cases in the first five branching rules work analogously.

        If all the tuples are only divided by the last branching rule, then again the characterization of $Y$ by non-constant even splits yields that two tuples will branch to the left and two to the right, yielding~\eqref{eq:claimf}. Thus, $f$ is a polymorphism.

\medskip Now, we define $g$ and $h$.  Let $\bar{x}=(x_1,x_2,x_3)\in F^3$ be arbitrary.   We distinguish the following two cases. 
Suppose that $\code{\bar{x}}\in \{(0,0,0),(1,1,1)\}$.
Recall that $f|_{F_i^6}=\alpha_i\circ f_i$ for both $i\in\{0,1\}$.
Then, for both $i\in\{0,1\}$ and $\bar{x}\in F_i^3$, we set  
  \begin{align*}
    g(\bar{x})\coloneqq \alpha_i\circ g_i(\bar{x}) \quad \textnormal{ and } \quad h(\bar{x})\coloneqq \alpha_i\circ h_i(\bar{x}).
  \end{align*}
Otherwise $\code{\bar{x}} \notin \{(0,0,0),(1,1,1)\}$.
Since $\code{\bar{x}}$ is non-constant, it contains either two 0s or two 1s. Let $x_*$ be the second entry in $\bar{x}$ from $F_i$, where $i$ appears twice in $\code{\bar{x}}$.\footnote{Here we could have chosen the first entry as well.} We set
    \begin{align*}
     g(\bar{x})\coloneqq f(x_1,x_2,x_3,x_*,x_*,x_*) \quad \textnormal{ and } \quad h(\bar{x})\coloneqq f(x_*,x_*,x_*,x_1,x_2,x_3).
    \end{align*}
        Clearly, we have $g(F^3)\subseteq f(F^6)$ and $h(F^3)\subseteq f(F^6)$. Moreover, since $g_0$, $g_1$, $h_0$, $h_1$, and $f$ are injective, so are $g$ and $h$.

\medskip
\noindent{\itshape Claim: $g$ is a polymorphism.} 

\medskip  

    Let  $\bar{x},\bar{y},\bar{z},\bar{w}\in F^3$ be arbitrary such that $\struct{F} \models Y(x_i,y_i,z_i,w_i)$  for every $i\in [3]$.
   We need to show that  
   \begin{align}
   (g(\bar{x}),g(\bar{y}),g(\bar{z}),g(\bar{w}))\in Y^{\struct{L}}. \label{eq:claimg}
   \end{align}
   To this end, we distinguish three cases.

   First, suppose that $\code{\bar{x}}=\code{\bar{y}}=\code{\bar{z}}=\code{\bar{w}}$.
   If the code is constant, then~\eqref{eq:claimg} follows follows from $g_i$ being a polymorphism.
   Otherwise~\eqref{eq:claimg} follows since $g$ inherits being lexicographically dominated from $f$.

   Next, suppose that  $\code{\bar{y}},\code{\bar{z}},\code{\bar{w}}$ are all equal to $\code{\bar{x}}$ or its complement mod 2.  
            If $\code{\bar{x}}$ is constant, then since $F_0|F_1$, there must be two tuples from $F_0^3$ and two from $F_1^3$ among $\bar{x},\bar{y},\bar{z},\bar{w}$. Since $g(F_i^3)\subseteq f(F_i^6)$ for both $i\in\{0,1\}$ and $f(F_0^6)|f(F_1^6)$, we get~\eqref{eq:claimg}. 
            Otherwise $\code{\bar{x}}$ is non-constant. If $\code{\bar{y}}=\code{\bar{x}}$, then also 
            \[
            (\code{y_1} ,\code{y_2} ,\code{y_3} ,\code{y_*} ,\code{y_*} ,\code{y_*} )=(\code{x_1} ,\code{x_2} ,\code{x_3},\code{x_*} ,\code{x_*} ,\code{x_*} ).
            \]
            If $\code{\bar{y}}\neq\code{\bar{x}}$, then 
            \[
(\code{y_1} ,\code{y_2} ,\code{y_3},\code{y_*} ,\code{y_*},\code{y_*})=(\code{x_1} ,\code{x_2} ,\code{x_3} ,\code{x_*} ,\code{x_*} ,\code{x_*} ) + (1,\dots, 1) \bmod 2.
            \]
            The same clearly holds for $\bar{z}$ and $\bar{w}$ extended by $(z_*,z_*,z_*)$ and $(w_*,w_*,w_*)$, respectively.
            In the definition of $f$, all extended tuples agree on the first 5 branching steps.
            Since $Y$ is characterized by non-constant even splits and $F_0|F_1$, the codes of the extended tuples contain two pairs.
            Hence, also the images of the extended tuples under $f$ which correspond to the images of $\bar{x},\bar{y},\bar{z},\bar{w}$ under $g$ yield an even split.
            In sum, we get~\eqref{eq:claimg}.   
  
           In the last remaining case, there exist $i,j\in [3]$ such that, without loss of generality, $\code{x_i} =\code{x_j} $ and $\code{y_i} \neq\code{y_j}$. 
           Then, since $$\struct{F} \models  Y(x_i,y_i,z_i,w_i) \wedge Y(x_j,y_j,z_j,w_j),$$ 
             either $\code{z_i} =\code{z_j}$ and $\code{w_i} \neq \code{w_j} $, or $\code{z_i} \neq\code{z_j}$ and $\code{w_i} = \code{w_j} $.
            Since $g(F_i^3)\subseteq f(F_i^6)$ for both $i\in\{0,1\}$ and since the image of a tuple $\bar{v}\in F^3\backslash(F_0^3\cup F_1^3)$ under $g$ lies in the bubble $(\code{v_1} ,\code{v_2} ,\code{v_3} ,\code{v_*} ,\code{v_*} ,\code{v_*} )$ in the definition tree of $f$ (see Figure~\ref{fig:definitiontreef}), we only need to check whether the bubbles corresponding to $\bar{x},\bar{y},\bar{z},\bar{w}$ satisfy $Y$. But it can be easily checked that one of the first two branching steps will split the codes evenly. 
            Hence, we get~\eqref{eq:claimg}. 
        The case distinction is exhaustive, and hence $g$ is a polymorphism.

\medskip\noindent{\itshape Claim: $h$ is a polymorphism.}

\medskip

       The arguments are the same as for $g$ except for the last case. 
       First, note that $h(F_i^3)\subseteq f(F_i^6)$ and that all tuples $\bar{x}$ with a non-constant code $\code{\bar{x}}$ are mapped to some element in the bubble corresponding to $(\code{x_*},\code{x_*},\code{x_*},\code{x_1},\code{x_2},\code{x_3})$ in the definition tree of $f$ (see Figure~\ref{fig:definitiontreef}). Hence, the images also satisfy $Y$, since they fall into the branch specified by $b_1=b_2$ and $b_2=b_3$ and then end up being split evenly at the third or fourth branching step by the same arguments as for $g$.

\medskip\noindent{\itshape Claim: $f,g,h$ satisfy the $(3+3)$-terms condition.}

\medskip 
 
        Let $x,y\in F$ be arbitrary. If $x,y\in F_i$ for $i\in\{0,1\}$, then 
        \[
        g(y,x,x)=\alpha_i\circ g_i(y,x,x)=\alpha_i\circ f_i(y,x,x,x,x,x)=f(y,x,x,x,x,x).
        \]
        Otherwise, we have $\code{y}\neq\code{x}$, hence, $x$ is the second entry in $(y,x,x)$ with code $\code{x}$. Thus, we also have
        $  
        g(y,x,x)=f(y,x,x,x,x,x).
        $  
        All other cases can be treated analogously. 
        
        \medskip 
        
    This finishes the proof of Claim~\ref{3plus3termsforX0X1}. 
\end{claimproof}

 We can now prove Claim~\ref{claim:3+3}.
If $|S_i|=1$, then we choose the operations $f_i,g_i,h_i$ that all map the only possible input $(x,\dots,x)$ to $x$.
Now, suppose that $f_i,g_i,h_i$ satisfying the stronger induction hypothesis from Claim~\ref{3plus3termsforX0X1} have already been defined on each subset of $S_i$ of size $<|S_i|$.
We set $F\coloneqq S_i$ and consider the split-partition $\{F_0,F_1\}$ thereof. 
Then Claim~\ref{3plus3termsforX0X1} immediately yields the desired operations on $S_i$.
We now show that we can impose an additional requirement on said operations.

 \begin{claim} Let $f_i,g_i,h_i$ for $i\in\mathbb{N}$ be constructed as above. Then for every $i\in \mathbb{N}$, there exists $\alpha_{i+1}\in \Aut(\mathbb{L};Y)$ such that $\alpha_{i+1}\circ f_{i+1}|_{S_{i}}=f_{i}$.
\end{claim}
            \begin{claimproof}
                Fix an arbitrary $i\in\mathbb{N}$. By the homogeneity of $(\mathbb{L};Y)$, it suffices to show that, for all $\bar{x},\bar{y},\bar{z},\bar{w}\in  S_i^6$, we have 
                \begin{align} 
                 (\mathbb{L};Y) \models Y(f_i(\bar{x}),f_i(\bar{y}),f_i(\bar{z}),f_i(\bar{w})) \Leftrightarrow Y(f_{i+1}(\bar{x}),f_{i+1}(\bar{y}),f_{i+1}(\bar{z}),f_{i+1}(\bar{w}))  \label{eq:iff}
                \end{align}
                To this end, let $M$ be the set of all entries appearing in $\bar{x},\bar{y},\bar{z},$ and $\bar{w}$. 
                Without loss of generality, we may assume  that $|M|>1$; otherwise the claim is trivially true.
                Let $\{M_0,M_1\}$ and $\{F_0,F_1\}$ be the split-partitions of $M$ and $F\coloneqq S_i$, respectively. 
                If $M\subseteq F_0$ or $M\subseteq F_1$, then we only need to look at the recursive definition of $f_i$ and $f_{i+1}$ on $F_0^6$ or $F_1^6$. 
                Hence, we may without loss of generality assume that $M_0\subseteq F_0$ and $M_1\subseteq F_1$; if $M$ is not fully contained in some $F_i$, we always have $M\cap F_0|M\cap F_1$.
                We may also assume that the split-partition $\{F'_0,F'_1\}$ of $F'\coloneqq S_{i+1}$ satisfies  $F_i\subseteq F_i'$ for both $i\in\{0,1\}$.
                Otherwise $S_i\subseteq F_0'$ or $S_i\subseteq F_1'$.
                In this case, in what follows, we only need to inspect the definition of $f_{i+1}$ on $(F_0')^6$ or $(F_1')^6$.
                
                Note that the splits between the bubbles associated to $\code{\bar{x}},\code{\bar{y}},\code{\bar{z}},\code{\bar{w}}$ (interpreted as codes marking which entries lie in $F_0$ and which in $F_1$ or, equivalently, which lie in $F_0'$ and which in $F_1'$) are exactly the same for $f_i$ and $f_{i+1}$. 
                Thus, if not all of the codes are the same, the splits between the images of the tuples under $f_i$ and $f_{i+1}$ agree since they are determined by the position of the bubbles in Figure~\ref{fig:definitiontreef}.
                Otherwise, if all the codes are equal, they fall into the same bubble. Since this must be a bubble with a non-constant code, the splits between the images of the tuples under $f_i$ and $f_{i+1}$ agree since they are determined by  lexicographic domination.
                Hence, eq.~\eqref{eq:iff} indeed holds.
            \end{claimproof}
    Note that following the same arguments and since $g_i$ and $h_i$ are defined via $f_i$, we also get $\alpha_{i+1}\circ g_{i+1}|_{S_{i}}=g_{i}$ and $\alpha_{i+1}\circ h_{i+1}|_{S_{i}}=h_{i}$.
    Finally, by setting $f_0'\coloneqq f_0$, $g_0'\coloneqq g_0$, $h_0'\coloneqq h_0$, and 
    \[
        f_i' \coloneqq \alpha_1\circ\dots\circ\alpha_i \circ f_i,   \qquad 
        g_i' \coloneqq \alpha_1\circ\dots\circ\alpha_i \circ g_i,  \qquad 
        h_i' \coloneqq \alpha_1\circ\dots\circ\alpha_i \circ h_i,
     \]
    we get chains of polymorphisms $(f_i')_{i\in\mathbb{N}}$, $(g_i')_{i\in\mathbb{N}}$, and $(h_i')_{i\in\mathbb{N}}$. Now \[f\coloneqq  \bigcup\nolimits_{i\in\mathbb{N}}f_i',\qquad g\coloneqq \bigcup\nolimits_{i\in\mathbb{N}}g_i',\qquad h\coloneqq \bigcup\nolimits_{i\in\mathbb{N}}h_i' \] are the required $(3+3)$-terms in $\Pol(\mathbb{L};Y)$. 
\end{proof}

We can now prove Theorem~\ref{thm:temporal_phylo_pp_constr}. 
\begin{proof}[Proof of Theorem~\ref{thm:temporal_phylo_pp_constr}]
The concrete template is $(\mathbb{L};Y)$. 

For item~\eqref{item:finite_reason_1}, recall that $Y'$ is pp-definable in $(\mathbb{L};Y)$. 
By Theorem~\ref{thm:phylo}, $\CSP(\mathbb{L};Y,Y')$ is inexpressible in FPC, and this is witnessed by fixed-image fooling instances.
Consequently, the PCSP provided by Theorem~\ref{thm:main_theorem_sandwiches_basic} from $\CSP(\mathbb{L};Y,Y')$ for $n$ large enough is not expressible in FPC.
By construction, said PCSP template is pp-constructible from $(\mathbb{L};Y)$. 

For item~\eqref{item:finite_reason_2}, we use the fact that, by Theorem~\ref{thm:spiesstheorem}, the polymorphism clone of $(\mathbb{L};Y)$ contains $(3+3)$-terms.
As in the proof of~\cite[Cor.~7.5]{bodirsky2022descriptive}, we conclude that $(\mathbb{L};Y)$ does not pp-construct any non-trivial finite Abelian group.
Then the claim follows because, for a finite structure $\struct{B}$, we have that $\CSP(\struct{B})$ is expressible in FPC if and only if $\struct{B}$ does not pp-construct any non-trivial finite Abelian group (\cite[Thm.~1.1]{bodirsky2022descriptive}).
\end{proof}

Recall that while definability in FPC and the ability to pp-construct a non-trivial finite Abelian group complement each other in the realm of finite-domain CSPs,  there are infinite structures within the scope of Conjecture~\ref{conj:bodirsky_pinsker}  who satisfy neither the former nor the latter. Yet, it is still possible that every structure within that scope whose CSP is not definable in FPC pp-constructs a finite-domain PCSP that is not definable in FPC.

\begin{openquestion}\label{q:FPC}
Does every structure in the scope of Conjecture~\ref{conj:bodirsky_pinsker} whose CSP is not definable in FPC pp-construct a finite-domain PCSP that is not definable in FPC?
\end{openquestion}

\subsection{The quest for uniform algorithms}

Theorem~\ref{thm:phylo} combined with Theorem~\ref{thm:main_theorem_sandwiches_basic} shows that, analogously to $(\mathbb{Q};X)$, also $(\mathbb{L};Y)$ pp-constructs a finite-domain PCSP that is not expressible in FPC.
As discussed in Section~\ref{sec:phylo_not_in_FPC},  $\CSP(\mathbb{Q};X)$ and $\CSP(\mathbb{L};Y)$ are similar decision problems, in the sense that the classic divide-and-conquer algorithms for solving them are similar.
Recently, uniform algorithms for all tractable temporal CSPs, in particular  for $\CSP(\mathbb{Q};X)$, were presented in~\cite{Mottet_2025}; they are based on a combination of polynomial-time \emph{sampling} and either \emph{singleton arc-consistency} or \emph{singleton affine integer programming}. Unfortunately, the  correctness proofs for these algorithms still use the ad-hoc framework of free sets, and in contrast to linear orders, it is not clear how to efficiently sample over leaf structures: for every $n\in \mathbb{N}$, there are exponentially many non-isomorphic trees with $n$ leaves (but only a single linear order of length $n$).
Hence, despite the fact that $\CSP(\mathbb{Q};X)$ and $\CSP(\mathbb{L};Y)$ are similar, it is unclear how to efficiently solve the latter using an algorithm based on numeric relaxation.

 \begin{openquestion}\label{q:phylogeny}
Can tractable phylogeny CSPs, and hence also the PCSPs produced from them via Theorems~\ref{thm:main_theorem_sandwiches} and~\ref{thm:main_theorem_sandwiches_basic}, be solved using a uniform algorithm based on local consistency checking and numeric relaxation --- similarly to temporal CSPs? 
\end{openquestion}

\section{Conclusion and outlook}\label{section:conclusion} 
 
In the present article, we have given two significant  simplifications of the scope of Conjecture~\ref{conj:bodirsky_pinsker} -- Theorems~\ref{thm:removing_algebraicity} and~\ref{thm:polinjective}.
We believe this will have rich consequences for the approach to the conjecture itself, but also for other applications; we offer one concrete example,  Theorem~\ref{thm:main_theorem_sandwiches}.
Below we offer some further more detailed discussion of our results.

\subsection{Is everything Ramsey?}
In the proof of Theorem~\ref{thm:main_theorem_sandwiches}, we use Theorem~\ref{thm:removing_algebraicity} to ensure that the obtained finite-domain PCSPs are not finitely tractable, which we could otherwise only show in certain specific cases.
The precise contribution of Theorem~\ref{thm:removing_algebraicity} is that it allows us to select:
\begin{itemize}
    \item a finite substructure whose polymorphisms preserve the inequality and 
    \item a finite factor by an arbitrary subgroup of automorphisms preserving a linear order and acting with finitely many orbits. 
\end{itemize} 

Regarding the former, while the inequality is often not even the only possible choice of a relation that would make our argument work, some structures in the scope of Conjecture~\ref{conj:bodirsky_pinsker} provably require a non-trivial modification before they can be used as a cheese of a non-finitely tractable PCSP; see Section~\ref{section:preprocessing}.

The latter is often possible without any modifications of the original structure -- it is conjectured that every finitely bounded homogeneous structure has a finitely bounded homogeneous ordered Ramsey expansion~\cite[Prob.~27]{bodirsky_pinsker_tsankov}; see also~\cite{the2014survey}.

In the proof of Theorem~\ref{thm:removing_algebraicity} we modify templates in order to reduce the scope of Conjecture~\ref{conj:bodirsky_pinsker} to structures without algebraicity. Motivated by this, we ask if something similar can be achieved with regards to the existence of finitely bounded homogeneous Ramsey expansions, a question weaker than \cite[Conj.~11.1.1]{bodirsky2021complexity} and another concretization of Main~Question~\ref{mq1}.
 \begin{openquestion}\label{q:Ramsey}
Is Conjecture~\ref{conj:bodirsky_pinsker} equivalent (under suitable reductions) to its restriction to templates with a finitely bounded homogeneous Ramsey expansion?
\end{openquestion}
 
 For details about structural Ramsey theory, we refer the reader to~\cite{hubivcka2025twenty} where the authors also  provide a concrete example of a finitely bounded homogeneous structure with no algebraicity for which no Ramsey expansion is known (Sect.~8.1.1).

\subsection{Topology seems irrelevant}
  
  In the context of infinite-domain CSPs, the influence of topological properties of the polymorphism clones of $\omega$-categorical structures has received much attention, with various results claiming their relevance~\cite{bodirsky_relevant_2019} or irrelevance~\cite{barto_pinsker_journal}: while generally, whether or not an $\omega$-categorical structure $\struct{A}$ pp-constructs $\struct{K}_3$ (or pp-interprets another $\omega$-categorical structure $\struct{A}'$) does depend on such topological properties~\cite{barto2018wonderland,Topo-Birk,uniformbirkhoff,  bodirsky2021projective}, in many situations it does not (see, e.g.,~\cite{EJMPP,marimon2025guidetopologicalreconstructionendomorphism}). 
    By moving from any $\omega$-categorical model-complete core template to one without algebraicity, Theorem~\ref{thm:removing_algebraicity} allows us to restrict Conjecture~\ref{conj:bodirsky_pinsker} to a class of topologically well-behaved structures. Namely, for those recent research suggests that often the algebraic structure determines the topological one; at least this is a fact for the endomorphism monoid, where the relevant topology (\emph{pointwise convergence}) can then be defined from the algebraic structure of the monoid alone~\cite{PINSKER_SCHINDLER_2023}.
     For $\omega$-categorical structures with algebraicity, on the other hand, various examples~\cite{evans_counterexample_1990, bodirsky_counterexample_2018,PINSKER_SCHINDLER_2023,gonzalez2026minimalintrinsictopologiesmonoids} show that this need not be the case. This also implies that there is no hope of lifting isomorphisms between endomorphism monoids or polymorphism clones of $\omega$-categorical structures to their counterparts constructed in Theorem~\ref{thm:removing_algebraicity}.

\subsection{Necessity of cheese preprocessing} \label{section:preprocessing}

In our proof of Theorem~\ref{thm:main_theorem_sandwiches}, we apply   Proposition~\ref{prop:sandwiches} to a structure which we previously ``preprocessed'' using Theorem~\ref{thm:removing_algebraicity} (removing algebraicity) and generic superpositions. 
It is natural to ask how strong Proposition~\ref{prop:sandwiches} is on its own: can a result similar to Theorem~\ref{thm:main_theorem_sandwiches} be proved directly using this proposition,  perhaps under a  more general structural assumption, e.g.~that  $\struct{A}$ is a model-complete core?
Here the answer seems to be negative.

\begin{example} 
Consider the model-complete core  structure \[\struct{A}=(\mathbb{Q};<,\{(x,y,z) \in \mathbb{Q}^{3} \mid x\geq y \text{ or } x\geq z\})\] within the scope of  Conjecture~\ref{conj:bodirsky_pinsker} ($\struct{A}_2$ in Example~\ref{ex:two_ex}); its CSP is  polynomial-time tractable~\cite{ComplOfTempCSPs}.
Let $\struct{S}$ be any finite structure for which there exists a homomorphism $h\colon \struct{S}\rightarrow \fpwr{\struct{A}}{d}$, and let $\struct{B}$ be an arbitrary $\omega$-categorical expansion of $\struct{A}$.
Let $A_S$ be the set of the elements in $A$ that appear as an entry of some element of $h(S)$, and let  $\struct{A}_S$ be the substructure of $\struct{A}$ on $A_S$.
Since $\smash{\fpwr{\struct{A}}{d}\to \orbeq{\fpwr{\struct{A}}{d}}{\Aut(\struct{B})}}$, the structure $\fpwr{\struct{A}}{d}_S$ is a finite cheese for $ \PCSP(\struct{S},\orbeq{\fpwr{\struct{A}}{d}}{\Aut(\struct{B})}) $.

 We claim that $ \CSP(\fpwr{\struct{A}}{d}_S) $ is tractable, and hence this PCSP is finitely tractable.  
 To see this, recall from Example~\ref{ex:two_ex} that the $n$-ary minimum operation is a polymorphism of $\struct{A}$ for every $n\geq 2$. This operation is \emph{conservative}:
 $\min\{x_1,\dots,x_n\}\subseteq \{x_1,\dots, x_n\}$ holds for all $x_1,\dots,x_n$.
 Hence, its restriction to $A_S^2$ induces a polymorphism of $\struct{A}_S$, which itself induces a polymorphism of $\fpwr{\struct{A}}{d}_S$ through its component-wise action.
 Since this operation is cyclic, $\fpwr{\struct{A}}{d}_S$ does not pp-construct $\struct{K}_3$ \cite[Thm.~1.4]{barto2018wonderland}, and hence we are done by Theorem~\ref{thm:finite_domain_CSP}.
 This issue cannot be fixed simply by taking an expansion of $\struct{A}$ by the inequality relation $\neq$ because $\CSP(\struct{A};\neq)$ is NP-complete~\cite{ComplOfTempCSPs}.
\end{example}

\subsection{Algorithmic properties of sandwiches} An interesting open question is whether the PCSP templates generated by Theorem~\ref{thm:main_theorem_sandwiches} can be solved by some uniform algorithm.
Many known PCSP problems admitting infinite tractable cheeses can be solved by numeric relaxation algorithms such as \emph{BLP} (the basic linear programming relaxation), \emph{AIP} (the basic affine integer relaxation) or \emph{BLP+AIP} (a combination of both) (see, e.g., \cite{barto2021algebraic,BLP_AIP}).
The proof of Proposition~\ref{prop:sandwiches} shows that none of the PCSP templates produced by Theorem~\ref{thm:main_theorem_sandwiches} admits a cyclic polymorphism.
Since solvability of PCSPs by BLP is characterized by admitting symmetric polymorphisms of all arities~\cite[Thm.~7.9]{barto2021algebraic}, which would also be cyclic, BLP does not solve any of those PCSPs.
Regarding the combination BLP+AIP, we remark that replacing $\blowup{\struct{A}}$ and $\smash{\ordblowup{\struct{B}}}$ in the proof of Theorem~\ref{thm:main_theorem_sandwiches} by 
$\blowupi{\struct{A}}$ and $\ordblowupi{\struct{B}}$ ensures that there is no cyclic and no 2-block symmetric operation in $\Pol(\struct{S},\blowupi{\struct{A}})$.
In fact, we get the following much stronger statement, which follows directly from the fact that we take an expansion by $I_4$. 
\begin{proposition}[cf.~proof of Lemma~7.5.1 in~\cite{bodirsky2021complexity}] \label{prop:lifting}
       Let $\Sigma$ be any finite height-1 condition that cannot be satisfied by any set of essentially injective operations from an at least 2-element set $S_1$ to a countably infinite set $S_2$. Then, for every at least 2-element substructure $\struct{S}$ of $\blowupi{\struct{A}}$, we have that   $\Pol(\struct{S},\blowupi{\struct{A}})$ does not satisfy $\Sigma$.  
\end{proposition}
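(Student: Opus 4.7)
The plan is to reduce the proposition to the following key lemma: every homomorphism $f\colon \struct{S}^n \to \blowupi{\struct{A}}$ is essentially injective as an operation $S^n \to A$, where $A$ is the domain of $\blowupi{\struct{A}}$. Once this lemma is established, the proposition follows immediately: if $\Sigma$ were satisfied in $\Pol(\struct{S},\blowupi{\struct{A}})$, then the interpretations of its function symbols would constitute a set of essentially injective operations from $S$ to $A$ witnessing $\Sigma$. Because $|S|\geq 2$ and $A$ is countably infinite (by Theorem~\ref{thm:removing_algebraicity}), this contradicts the hypothesis on $\Sigma$.

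To prove the lemma, I would exploit that $I_4$ belongs to the signature of $\blowupi{\struct{A}}$ and is therefore preserved by $f$. Unfolding this preservation yields the implication: for all $\bar{a},\bar{b},\bar{u},\bar{v}\in S^n$, if $\{i:u_i=v_i\}\supseteq \{i:a_i=b_i\}$ and $f(\bar{a})=f(\bar{b})$, then $f(\bar{u})=f(\bar{v})$. Let $J\subseteq [n]$ be the set of coordinates $i$ such that $f$ is invariant under altering only coordinate $i$ (the \emph{dummy} coordinates), and set $I\coloneqq [n]\setminus J$. Iterating single-coordinate changes in $J$, one sees that $f$ factors through the projection $\pi_I\colon S^n\to S^I$ as $f=f'\circ \pi_I$.

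To verify that the induced map $f'$ is injective, I would suppose for contradiction that $f(\bar{a})=f(\bar{b})$ for some $\bar{a},\bar{b}$ with $\bar{a}|_I\neq \bar{b}|_I$, and pick $i^*\in I$ with $a_{i^*}\neq b_{i^*}$. For any pair $\bar{u},\bar{v}\in S^n$ differing only at coordinate $i^*$ (such pairs exist precisely because $|S|\geq 2$), the agreement set $[n]\setminus\{i^*\}$ contains $\{i:a_i=b_i\}$, since $i^*$ is excluded from both. The unfolded $I_4$-preservation then forces $f(\bar{u})=f(\bar{v})$ for every such pair, placing $i^*$ in $J$ and contradicting $i^*\in I$.

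The main obstacle is this lemma on $I_4$-preservation, which is the sole non-trivial ingredient; the remainder of the argument is its immediate application. The proof of the lemma itself is elementary bookkeeping and does not draw on any deeper properties of $\struct{A}$, $\struct{B}$, or the wreath product construction — only on the preservation of $I_4$ and the assumption $|S|\geq 2$, which enters exactly at the step where distinct values at coordinate $i^*$ are required.
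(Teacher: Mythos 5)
Your proposal is correct and follows the same route as the paper: the entire content is that preservation of $I_4$ forces a homomorphism $\struct{S}^n\to\blowupi{\struct{A}}$ to be essentially injective, after which the hypothesis on $\Sigma$ applies verbatim with $S_1=S$ and $S_2$ the countably infinite domain of $\blowupi{\struct{A}}$. The paper simply cites this fact (to~\cite{BodChenPinsker} and Lemma~7.5.1 of~\cite{bodirsky2021complexity}) rather than re-deriving it, but your self-contained derivation of the key lemma from the unfolded $I_4$-preservation condition is correct.
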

Similarly to cyclic identities, also the 2-block symmetric identities can be lifted from $\smash{\Pol(\fpwr{\struct{S}}{d},\orbeq{\fpwr{\widehat{\struct{A}}}{d}}{G})}$ to $\Pol(\struct{S},\widehat{\struct{A}})$, where 
\begin{itemize}
    \item $\widehat{\struct{A}}\coloneqq\blowupi{\struct{A}}$ and 
    \item $G\coloneqq \Aut\big(\ordblowupi{\struct{B}} \astlarge (\mathbb{Q};<)\big).$
\end{itemize} 
Thus, since solvability of a finite-domain PCSP by BLP+AIP is characterized by the existence of 2-block symmetric polymorphisms \cite[Thm.~4]{BLP_AIP}, we get that $\PCSP(\fpwr{\struct{S}}{d},\orbeq{\fpwr{\widehat{\struct{A}}}{d}}{G})$ is not solvable by BLP+AIP.
For details, we refer the reader to~\cite[Thm.~2]{Mottet_2025}, where this idea was used in concrete cases of non-finitely tractable PCSPs. 
It follows that we can exclude BLP and BLP+AIP as potential algorithms solving all tractable PCSPs produced by Theorem~\ref{thm:main_theorem_sandwiches}. 
As demonstrated in~\cite[Lem.~33]{Mottet_2025}, the above idea can in fact be extended to any uniform algorithm such that solvability of finite-domain PCSPs by this algorithm is captured by a set of height-1 identities specified by non-trivial permutations of variables.
Most known uniform algorithms for finite-domain PCSPs are either captured by such height-1 conditions or their algebraic description is rather complicated~\cite{ciardo2023hierarchies}.
However, the lifting method we and~\cite{Mottet_2025} use provably cannot be extended to all height-1 conditions covered by Proposition~\ref{prop:lifting}.
To see this, note that the Ol\v{s}\'{a}k identities 
\[f(y,x,x,x,y,y)\approx f(x,y,x,y,x,y)\approx f(x,x,y,y,y,x)\] 
cannot be satisfied by essentially injective operations but every finite-domain PCSP whose template does not have an Ol\v{s}\'{a}k polymorphism must be NP-hard~\cite[Cor.~6.3]{barto2021algebraic}. 
This means that the Ol\v{s}\'{a}k identities cannot possibly be prevented in our PCSPs without losing solvability in polynomial time (unless P=NP).

\subsection{Algorithmic properties of infinite-domain CSPs}

Bodirsky and Rydval~\cite[Thm.~1.6]{bodirsky2022descriptive} showed that the expressibility of CSPs from Conjecture~\ref{conj:bodirsky_pinsker} in Datalog is not captured by any set of identities for polymorphisms.
Our Theorem~\ref{thm:polinjective} shows that the complexity classes of CSPs in Conjecture~\ref{conj:bodirsky_pinsker} closed under Datalog-reductions can only be captured by identities satisfiable by essentially injective operations.
It might seem that these results essentially rule out the usefulness of height-1 conditions in the infinite-domain setting.
However, a recently announced result of Zhuk~\cite{zhuk2025singleton} points in the exact opposite direction: there might be a way to link the tractability of infinite-domain CSPs to height-1 identities\textemdash by considering modified versions of the notion of a polymorphism.

 Zhuk~\cite{zhuk2025singleton} shows that the applicability of the uniform algorithms from~\cite{Mottet_2025} to temporal CSPs is always witnessed by the so-called \emph{palette totally symmetric} identities, a certain type of height-1 identities that cannot be satisfied by essentially injective operations.  
But how can this be true, say, for the template $(\mathbb{Q};\neq, I_4)$, which only has essentially injective polymorphisms?
The important detail we omitted is that these identities only need to be satisfied in a certain \emph{palette-variant} of the polymorphism clone~\cite{zhuk2025singleton}, not the polymorphism clone itself.
In some cases, the polymorphism clone provably does not contain any palette totally symmetric operations; the structure $(\mathbb{Q};\neq, I_4)$ is an example of this phenomenon~\cite{zhuk2025singleton}.
Understanding Zhuk's palette-polymorphisms might help us in further development of the algebraic approach to infinite-domain CSPs.

\bibliographystyle{abbrv}
\bibliography{references}

\end{document}